\documentclass{amsart}

\usepackage[T1]{fontenc}
\usepackage[latin1]{inputenc}
\usepackage{url}
\usepackage[colorlinks=true]{hyperref}

\newcommand{\dd}{\mathrm{d}}
\newcommand{\R}{\mathbb{R}}
\renewcommand{\P}{\mathcal{P}}
\newcommand{\Exp}{\mathbb{E}}
\renewcommand{\Pr}{\mathbb{P}}
\newcommand{\ind}[1]{\mathbf{1}_{\{#1\}}}
\newcommand{\Supp}{\mathrm{Supp}\,}
\newcommand{\sgn}{\,\mathrm{sgn}}
\newcommand{\Lloc}{L^1_{\mathrm{loc}}(\R)}
\newcommand{\Cc}{C^{\infty}_{\mathrm{c}}}
\newcommand{\tA}{\tilde{A}}
\newcommand{\tB}{\tilde{B}}

\newcommand{\tmu}{\tilde{\mu}}
\newcommand{\tpi}{\tilde{\pi}}
\newcommand{\tP}{\tilde{P}}

\newcommand{\bF}{\bar{F}}
\newcommand{\F}{\mathcal{F}}

\newcommand{\g}{g^{\delta}_{\eta,\epsilon}}
\newcommand{\gk}{g^{\delta}_{\eta,\epsilon_k}}
\newcommand{\ua}{\underline{a}}

\newcommand{\ta}{\mathrm{tl}}
\newcommand{\A}{\mathcal{A}}
\newcommand{\Lip}{\mathrm{b}}
\newcommand{\G}{\mathcal{G}}

\newcommand{\sk}{\vskip 3mm}

\newtheorem{defi}{Definition}[section]
\newtheorem{lem}[defi]{Lemma}
\newtheorem{prop}[defi]{Proposition}
\newtheorem{theo}[defi]{Theorem}
\newtheorem{cor}[defi]{Corollary}

\newtheoremstyle{myremark}{}{}{}{0pt}{\bfseries}{.}{ }{}
\theoremstyle{myremark}
\newtheorem{rk}[defi]{Remark}

\textheight=660pt 
\textwidth=15cm
\topmargin=-27pt 
\oddsidemargin=0.7cm
\evensidemargin=0.7cm
\marginparwidth=60pt

\title[Propagation of chaos for rank-based interacting diffusions]{Propagation of chaos for rank-based interacting diffusions and long time behaviour of a scalar quasilinear parabolic equation}

\author{Benjamin Jourdain}
\address{Université Paris-Est, CERMICS\\
6 et 8 avenue Blaise Pascal, Cité Descartes\\
77455 Marne-la-Vallée Cedex 2}
\email{\href{mailto:jourdain@cermics.enpc.fr}{jourdain@cermics.enpc.fr}}

\author{Julien Reygner}
\address{Laboratoire de Probabilités et Modèles Aléatoires (CNRS UMR. 7599)\\
Université Paris 6 --- Pierre et Marie Curie, U.F.R. Mathématiques\\
Case 188, 4 place Jussieu\\
75252 Paris Cedex 05}
\address{Université Paris-Est, CERMICS\\
6 et 8 avenue Blaise Pascal, Cité Descartes\\
77455 Marne-la-Vallée Cedex 2}
\email{\href{mailto:julien.reygner@upmc.fr}{julien.reygner@upmc.fr}}

\thanks{This work is supported by the French National Research Agency under the grant ANR-12-BLAN-Stab. The final publication is available at \url{http://link.springer.com/article/10.1007/s40072-013-0014-2}.}
\subjclass[2000]{65C35, 60K35, 35K59}
\keywords{Nonlinear evolution equation, particle system, propagation of chaos, Wasserstein distance, long time behaviour}

\begin{document}

\begin{abstract}
  We study a quasilinear parabolic Cauchy problem with a cumulative distribution function on the real line as an initial condition. We call `probabilistic solution' a weak solution which remains a cumulative distribution function at all times. We prove the uniqueness of such a solution and we deduce the existence from a propagation of chaos result on a system of scalar diffusion processes, the interactions of which only depend on their ranking. We then investigate the long time behaviour of the solution. Using a probabilistic argument and under weak assumptions, we show that the flow of the Wasserstein distance between two solutions is contractive. Under more stringent conditions ensuring the regularity of the probabilistic solutions, we finally derive an explicit formula for the time derivative of the flow and we deduce the convergence of solutions to equilibrium.
\end{abstract}

\maketitle

\section*{Introduction}

Let $a,b : [0,1] \to \R$ be continuous functions, with $a \geq 0$. For all $u \in [0,1]$, let us define $A(u) = \int_0^u a(v)\dd v$ and $B(u) = \int_0^u b(v)\dd v$. Let $m$ be a probability distribution on $\R$. We are interested in the nonlinear Cauchy problem on $[0,+\infty) \times \R$:
\begin{equation}\label{eq:myPDE}
  \left\{\begin{aligned}
    & \partial_t F_t(x) = \frac{1}{2}\partial_x^2 \big(A(F_t(x))\big) - \partial_x \big(B(F_t(x))\big),\\
    & F_0(x) = H*m(x),
  \end{aligned}\right.
\end{equation}
where $H * \cdot$ refers to the spatial convolution with the Heaviside function. 

The partial differential equation in~\eqref{eq:myPDE} is called a scalar {\em quasilinear parabolic} equation. It is a model for several usual nonlinear evolution equations, such as the porous medium equation, for which $B(u)=0$ and the diffusion term has the particular form $A(u) = u^q$, $q>1$; or conservation laws, in which the diffusion term is linear, i.e. $A(u) = \sigma^2 u$ with $\sigma^2 \geq 0$. A conservation law is said to be {\em viscous} if $\sigma^2>0$ and {\em inviscid} if $\sigma^2=0$. A particular case of a conservation law is the Burgers equation, for which $B(u)=u^2$.  

In this article, we introduce a probabilistic approximation of the Cauchy problem by means of a system of scalar diffusion processes, interacting through their ranking. We then use this probabilistic representation to study the long time behaviour of the solution.

\sk
A {\em weak solution} to the Cauchy problem~\eqref{eq:myPDE} is a continuous mapping $F : t \in [0,+\infty) \mapsto F_t \in \Lloc$ such that for all $t \geq 0$, $F_t$ takes its values in $[0,1]$ and for all $g \in \Cc([0, +\infty) \times \R)$,
\begin{equation}\label{eq:weak}
  \begin{split}
    & \int_{\R} g(t,x)F_t(x)\dd x - \int_{\R} g(0,x)H*m(x)\dd x\\
    & \qquad =\int_{\R}\int_0^t \left\{\frac{1}{2}A(F_s(x))\partial_x^2g(s,x) + B(F_s(x))\partial_x g(s,x) + F_s(x)\partial_s g(s,x)\right\}\dd s \dd x,
  \end{split}
\end{equation}
where $\Cc([0, +\infty) \times \R)$ refers to the space of real-valued $C^{\infty}$ functions with compact support in $[0, +\infty) \times \R$. 

When $A$ is $C^2$ on $[0,1]$, we say that $F$ has the {\em classical regularity} if $F$ is $C^{1,2}$ on $(0,+\infty) \times \R$ and solves~\eqref{eq:myPDE} in the classical sense. The space derivative $p$ of a solution $F$ with classical regularity satisfies the nonlinear Fokker-Planck equation
\begin{equation}\label{eq:EFP}
  \partial_t p_t(x) = \frac{1}{2} \partial_x^2\big(a(H*p_t(x))p_t(x)\big) - \partial_x\big(b(H*p_t(x))p_t(x)\big),
\end{equation}
therefore it is natural to consider the associated {\em nonlinear} stochastic differential equation
\begin{equation}\label{eq:nldp}
  \left\{\begin{aligned}
    &X_t = X_0 + \int_0^t b(H*P_s(X_s))\dd s + \int_0^t \sigma(H*P_s(X_s))\dd W_s,\\
    &\text{$P_t$ is the distribution of $X_t$},
  \end{aligned}\right.  
\end{equation}
where $\sigma(u):=a(u)^{1/2}$, $X_0$ has distribution $m$ and is independent of the Brownian motion $W$. Due to the discontinuity of the Heaviside function, a direct study of this equation by classical techniques, such as the use of fixed-point theorems~\cite{sznitman}, seems out of reach (except when the diffusion coefficient $\sigma$ is constant, see~\cite{jourdain:burgers}); therefore we introduce a linearized approximation of~\eqref{eq:nldp}.

\sk
In Section~\ref{s:prob}, we call {\em particle system} a solution $X^n \in C([0,+\infty),\R^n)$ to the stochastic differential equation in $\R^n$
\begin{equation}\label{eq:sp}
  \left\{\begin{aligned}
    &X^{i,n}_t = X^i_0 + \int_0^t b(H*\mu^n_s(X_s^{i,n}))\dd s + \int_0^t \left(c_n + \sigma(H*\mu^n_s(X_s^{i,n}))\right)\dd W_s^i,\\
    &\mu^n_t = \frac{1}{n}\sum_{i=1}^n \delta_{X^{i,n}_t},
  \end{aligned}\right.
\end{equation}
where $(X_0^i)_{i \geq 1}$ is a sequence of i.i.d. random variables with marginal distribution $m$, independent of the $\R^n$-valued Brownian motion $(W^1, \ldots, W^n)$. Here, the term $c_n > 0$ has been added in the diffusion in order to ensure the well definition of solutions, and it is only required to vanish when $n \to +\infty$. In Proposition~\ref{prop:sp}, we prove that as soon as the function $A$ is increasing and $m$ has a finite first order moment, the flow of time marginals $t \mapsto \mu_t^n$ of the empirical distribution of the particle system converges in probability to the unique mapping $t \mapsto P(t)$ such that the function $F : (t,x) \mapsto (H*P(t))(x)$ is a weak solution to the Cauchy problem~\eqref{eq:myPDE}. This function will be referred to as the {\em probabilistic solution} of the Cauchy problem. Our analysis is essentially based on results obtained for the particular case of the porous medium equation~\cite{jourdain:porous}. A crucial argument in the extension of this work is the uniqueness of weak solutions to the Cauchy problem stated in Proposition~\ref{prop:unicite-edp}, the proof of which is adaptated from works by Wu, Zhao, Yin and Lin~\cite{zhao-livre} and Liu and Wang~\cite{liu} (see Appendix~\ref{app:unicite}). Proposition~\ref{prop:sp} has strong connections with recent results by Shkolnikov~\cite{shkolnikov}, see Remark~\ref{rk:shkolnikov}.

We then give two representations of the mapping $t \mapsto P(t)$ as the flow of time marginals of a probability distribution on the space of sample-paths $C([0,+\infty),\R)$. More precisely, in Subsection~\ref{ss:nlmp}, we give necessary and sufficient conditions on the coefficients of the Cauchy problem for the empirical distribution $\mu^n = (1/n)\sum_{i=1}^n \delta_{X^{i,n}}$ of the particle system in $C([0,+\infty),\R)$ to converge in probability to the law $P$ of a weak solution to~\eqref{eq:nldp}. In Subsection~\ref{ss:spr}, we define the {\em reordered particle system} as the reflected diffusion process obtained by increasingly reordering the positions $(X^{1,n}_t, \ldots, X^{n,n}_t)$ of the particles. We prove in Proposition~\ref{prop:spr} that the associated empirical distribution $\tmu^n$ converges in probability to a probability distribution $\tP$ on $C([0,+\infty),\R)$ with time marginals $P(t)$. 

\sk
Our motivation for introducing the particle system~\eqref{eq:sp} is the study of nonlinear evolution problems, as it has been done for particular equations by Jourdain~\cite{jourdain:signed, jourdain:porous, jourdain:characteristics}. However, such systems of so-called {\em rank-based interacting particles} also arise in several contexts (see the introduction of~\cite{ips} for references), and have received much attention lately. In particular, motivated by the study of the Atlas model of equity markets introduced by Fernholz~\cite{fernholz} (see also Banner, Fernholz and Karatzas~\cite{banner}), much work has been done about the rank-based stochastic differential equation
\begin{equation}\label{eq:rbps}
  \dd X_t^i = \sum_{j=1}^n \ind{X_t^i=Y_t^j}b_j \dd t + \sum_{j=1}^n \ind{X_t^i=Y_t^j}\sigma_j \dd W_t^i,
\end{equation}  
where $(Y_t^1, \ldots, Y_t^n)$ refers to the increasing reordering of $(X_t^1, \ldots, X_t^n)$. The case $n=2$ is exhaustively studied by Fernholz, Ichiba, Karatzas and Prokaj~\cite{fikp}. For $n \geq 3$, Ichiba, Karatzas and Shkolnikov~\cite{iks} show that strong solutions can be defined as long as there is no triple collision. Triple collisions are studied in~\cite{ik}. Concentration of measure bounds for the local time at collisions and statistics related to this system are given by Pal and Shkolnikov in~\cite{ps}.

As far as the long time behaviour of solutions to~\eqref{eq:rbps} is concerned, Ichiba, Papathanakos, Banner, Karatzas and Fernholz~\cite{ipbkf} prove that under some convexity assumption on the sequence of drift coefficients $(b_j)$, the process of spacings $(Y_t^2-Y_t^1, \ldots, Y_t^n - Y_t^{n-1})$ converges in total variation to its unique stationary distribution when $t \to +\infty$. When the sequence of diffusion coefficients is such that $\sigma^2_2 - \sigma^2_1 = \cdots = \sigma^2_n - \sigma^2_{n-1}$, this stationary distribution is the product of exponential distributions. These results extend the work by Pal and Pitman~\cite{pp}, in which $\sigma_j=1$ for all $j$. In this case, the particle system solution to~\eqref{eq:rbps} does not have any equilibrium, as the process of its center of mass is a drifted Brownian motion. However, the convergence to equilibrium in total variation of its projection on the hyperplane $\{x_1 + \cdots + x_n = 0\}$ can be deduced from the long time behaviour of the process of spacings~\cite{pp}. Convergence rates are provided by Ichiba, Pal and Shkolnikov~\cite{ips} using Lyapounov functionals. Based on the Poincaré inequality satisfied by the stationary distribution, Jourdain and Malrieu~\cite{jm} prove the convergence to equilibrium in $\chi^2$ distance with an exponential rate, which is uniform in $n$. However, due to the lack of scaling property in the dimension $n$ for the $\chi^2$ distance, one cannot deduce from their result the convergence to equilibrium of the probabilistic solution $F_t$ to the Cauchy problem~\eqref{eq:myPDE}, which is the purpose of Sections~\ref{s:contr} and~\ref{s:conv} of this article.

\sk
In many cases, transport metrics, and the Wasserstein distance in particular, are contractive for the flow of solutions to parabolic equations: see von Renesse and Sturm~\cite{sturm} for the linear Fokker-Planck equation, Carrillo, McCann and Villani~\cite{cmcv}, Cattiaux, Guillin and Malrieu~\cite{cgm} and the recent work by Bolley, Gentil and Guillin~\cite{bgg} for the granular media equation and Bolley, Guillin and Malrieu~\cite{bgm} for the kinetic Vlasov-Fokker-Planck equation. We will prove such a contractivity property by a probabilistic argument and without further regularity assumption, and then take advantage of it to state the convergence to equilibrium of the solutions. 

Let us first recall some useful properties of the one-dimensional Wasserstein distance (see Villani~\cite{villani} for a complete introduction). Let $p \geq 1$. For all probability distributions $\mu$ and $\nu$ on $\R$, we define
\begin{equation*}
  W_p(\mu,\nu) := \inf_{(X,Y) \in \Pi(\mu,\nu)} \Exp\left(|X-Y|^p\right)^{1/p},
\end{equation*}
where $\Pi(\mu,\nu)$ refers to the set of random couples $(X,Y)$ such that $X$ has marginal distribution $\mu$ and $Y$ has marginal distribution $\nu$. As soon as both $\mu$ and $\nu$ have a finite moment of order $p$, then $W_p(\mu,\nu) < +\infty$. 

Given a right-continuous nondecreasing function $F$, we define its pseudo-inverse as $F^{-1}(u) := \inf\{x \in \R : F(x) > u\}$. Then it is a remarkable feature of the one-dimensional case that the Wasserstein distance $W_p(\mu,\nu)$ can be expressed in terms of the pseudo-inverses of the cumulative distribution functions $F_{\mu} := H*\mu$ and $F_{\nu} := H*\nu$ as
\begin{equation}\label{eq:WinvCDF}
  W_p^p(\mu,\nu) = \int_0^1 |F^{-1}_{\mu}(u)-F^{-1}_{\nu}(u)|^p\dd u.
\end{equation}
This leads to the following useful expressions: let $(x_1, \ldots, x_n) \in \R^n$, we denote by $(y_1, \ldots, y_n)$ its increasing reordering and by $\mu^n$ its empirical distribution. Then for all probability distribution $\mu$,
\begin{equation}\label{eq:Wmunmu}
  W_p^p(\mu^n,\mu) = \sum_{i=1}^n \int_{(i-1)/n}^{i/n} |y_i - F_{\mu}^{-1}(u)|^p\dd u \leq \sum_{i=1}^n \int_{(i-1)/n}^{i/n} |x_i - F_{\mu}^{-1}(u)|^p\dd u.
\end{equation}
In particular when $\mu$ is the empirical distribution $\mu'^n$ of some vector $(x'_1, \ldots, x'_n)$ with increasing reordering $(y'_1, \ldots, y'_n)$,
\begin{equation}\label{eq:Wmunmun}
  W_p^p(\mu^n,\mu'^n) = \frac{1}{n}\sum_{i=1}^n |y_i-y'_i|^p \leq \frac{1}{n}\sum_{i=1}^n |x_i-x'_i|^p.
\end{equation}
We finally point out the fact that we will indifferently refer to the Wasserstein distance between $\mu$ and $\nu$ as $W_p(\mu,\nu)$ or $W_p(F_{\mu}, F_{\nu})$.

\sk
In Section~\ref{s:contr} we study the evolution of the Wasserstein distance between two probabilistic solutions $F$ and $G$ of the Cauchy problem~\eqref{eq:myPDE} with different initial conditions $F_0$ and $G_0$. We use the contractivity of the reordered particle system in Proposition~\ref{prop:contr} to prove that the flow $t \mapsto W_p(F_t,G_t)$ is nonincreasing. Then we provide an explicit expression of the time derivative of the flow in Proposition~\ref{prop:quant}. Our work is related to results exposed in the review papers by Carrillo and Toscani~\cite{ct} and Carrillo, Di Francesco and Lattanzio~\cite{cdl}; a further review is given in Remark~\ref{rk:carrillo}.

\sk
Section~\ref{s:conv} is dedicated to the convergence to equilibrium of the solutions. We call {\em stationary solution} the cumulative function $F_{\infty}$ of a probability distribution $m_{\infty}$ with a finite first order moment, such that if $F$ is the probabilistic solution of the Cauchy problem~\eqref{eq:myPDE} with $F_0 = F_{\infty}$, then for all $t \geq 0$, $F_t = F_{\infty}$. It is clear from~\eqref{eq:weak} that $F_{\infty}$ is a stationary solution if and only if it solves the {\em stationary equation} $(1/2) \partial^2 (A(F_{\infty})) - \partial (B(F_{\infty})) = 0$, in the sense of distributions.

We solve the stationary equation in Proposition~\ref{prop:stat}, extending the results of Jourdain and Malrieu~\cite{jm} who deal with the viscous conservation law. Using the results of Section~\ref{s:contr} as well as the probabilistic approximation built in Section~\ref{s:prob}, we then prove in Theorem~\ref{theo:equi} that the probabilistic solutions converge to the stationary solutions in Wasserstein distance.

In~\cite{jm}, the solutions of the viscous conservation law are proven to converge exponentially fast to equilibrium in $\chi^2$ distance, under the condition that the initial measure $m$ be close enough to the stationary solution $m_{\infty}$. Theorem~\ref{theo:equi} does not involve such a condition, but the proof does not provide any indication on the rate of convergence of $F_t$ to the equilibrium. As we remark in Subsection~\ref{ss:rate}, one can recover an exponential rate of convergence in (quadratic) Wasserstein distance in the setting of~\cite{jm}.

\subsection*{Notations} Given a separable metric space $S$, we denote by $\P(S)$ the set of Borel probability distributions on $S$, equipped with the topology of weak convergence. The space $C([0,+\infty),S)$ of continuous functions from $[0,+\infty)$ to $S$ is provided with the topology of the uniform convergence on the compact sets of $[0,+\infty)$. Besides, for all probability distribution $\mu \in \P(C([0,+\infty),S))$, the marginal distribution at time $t \geq 0$ is denoted by $\mu_t \in \P(S)$. The {\em canonical application} $\P(C([0,+\infty),S)) \to C([0,+\infty),\P(S))$ associates the distribution $\mu$ with the flow of its time marginals $t \mapsto \mu_t$. Finally, if $f$ is a real-valued bounded function then $||f||_{\infty}$ refers to the supremum of the function $|f|$. 

\subsection*{Assumptions} Our results are valid under various assumptions on the degeneracy of the parabolic equation. Let us introduce the following conditions:
\begin{itemize}
  \item[(D1)] The function $A$ is increasing.
  \item[(D2)] For all $u \in (0,1]$, $a(u) > 0$.
  \item[(D3)] There exists $\ua > 0$ such that, for all $u \in [0,1]$, $a(u) \geq \ua$.
\end{itemize}
Obviously, (D1) is weaker than (D2), which is weaker than (D3). 

We also introduce the two following conditions on the regularity of the coefficients:
\begin{itemize}
  \item[(R1)] The function $a$ is $C^1$ on $[0,1]$.
  \item[(R2)] The function $a$ is $C^2$ on $[0,1]$, the function $b$ is $C^1$ on $[0,1]$ and there exists $\beta>0$ such that the functions $a''$ and $b'$ are $\beta$-Hölder continuous.
\end{itemize}
The condition (R1) is a natural necessary condition for the Cauchy problem~\eqref{eq:myPDE} to admit classical solutions. The stronger condition (R2) will be used in Lemma~\ref{lem:regul} to ensure the existence of classical solutions to the Fokker-Planck equation~\eqref{eq:EFP}.

Finally, the existence and integrability of stationary solutions will depend on the two following equilibrium conditions:
\begin{itemize}
  \item[(E1)] For all $u \in (0,1)$, $B(u) > 0$, $B(1)=0$ and the function $a/2B$ is locally integrable on $(0,1)$.
  \item[(E2)] The function $a/2B$ is such that
  \begin{equation*}
    \int_0^{1/2} \frac{a(u) u}{2|B(u)|}\dd u + \int_{1/2}^1 \frac{a(u)(1-u)}{2|B(u)|}\dd u < +\infty.
  \end{equation*}
\end{itemize}

\section{Probabilistic approximation of the solution}\label{s:prob}

\subsection{Existence and uniqueness of the probabilistic solution}\label{ss:sp} Following~\cite{bass}, for all $n \geq 1$ there exists a unique weak solution $X^n = (X^{1,n}_t, \ldots, X^{n,n}_t)_{t \geq 0}$ to the stochastic differential equation~\eqref{eq:sp}. We call it the {\em particle system} and denote by $\mu^n$ the random variable in $\P(C([0,+\infty),\R))$ defined by $\mu^n := (1/n)\sum_{i=1}^n \delta_{X^{i,n}}$.

Let $T>0$, possibly $T=+\infty$. We denote by $\P_1(T)$ the set of continuous mappings $t \in [0,T) \mapsto P(t) \in \P(\R)$ such that for all $t \in [0,T)$, the probability distribution $P(t)$ has a finite first order moment and the function $t \mapsto \int_{\R} |x|P(t)(\dd x)$ is locally integrable on $[0,T)$. Let $\F(T) := \{ F : (t,x) \mapsto (H*P(t))(x) ; P \in \P_1(T)\}$; note that $\F(T) \subset C([0,T),\Lloc)$. The particular sets $\P_1(+\infty)$ and $\F(+\infty)$ are simply denoted by $\P_1$ and $\F$.

Throughout this article, we will call {\em probabilistic solution} the solution to the Cauchy problem~\eqref{eq:myPDE} given by the following proposition.

\begin{prop}\label{prop:sp}
  Under the nondegeneracy condition (D1) and the assumption that $m$ has a finite first order moment, there exists a unique weak solution $F$ to the Cauchy problem~\eqref{eq:myPDE} in $\F$, and it writes $F: (t,x) \mapsto (H*P(t))(x)$ where the mapping $t \mapsto P(t)$ is the limit in probability, in $C([0,+\infty),\P(\R))$, of the sequence of mappings $t \mapsto \mu^n_t$.
\end{prop}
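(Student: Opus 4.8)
The plan is to split the statement into two parts: uniqueness of a weak solution within $\F$, and existence via propagation of chaos for the particle system. Uniqueness is essentially inherited from Proposition~\ref{prop:unicite-edp}: any $F \in \F$ is by construction a flow of cumulative distribution functions, and two such weak solutions with the same initial datum $H*m$ must coincide by the uniqueness result proved in Appendix~\ref{app:unicite}. So the bulk of the work is existence, for which I would follow the strategy developed in~\cite{jourdain:porous} for the porous medium equation, adapted to the presence of the drift term $B$ and the regularizing constants $c_n$.

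For the existence half, I would first establish tightness of the laws of $\mu^n$ in $\P(C([0,+\infty),\R))$. The key input is a moment bound: since $m$ has a finite first moment and the coefficients $b$, $\sigma$ and $c_n + \sigma$ are bounded (continuous on the compact $[0,1]$, with $c_n$ bounded uniformly in $n$), one gets $\Exp(|X^{i,n}_t|) \le C(1+t)$ uniformly in $i$ and $n$, hence uniform integrability of the first moments of $\mu^n_t$; a standard Kolmogorov-type estimate on increments of $X^{i,n}$ then yields tightness of $\{\mathrm{Law}(\mu^n)\}$ via Aldous' criterion or a direct moment computation on $\Exp|X^{i,n}_t - X^{i,n}_s|^4$. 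Let $\mu^\infty$ be any weak limit point (along a subsequence). The next step is to identify $t \mapsto \mu^\infty_t$ — more precisely to show that $F^\infty(t,x) := H*\mu^\infty_t(x)$ is almost surely a weak solution of~\eqref{eq:myPDE} in the sense of~\eqref{eq:weak}. Here the main subtlety, and the step I expect to be the principal obstacle, is passing to the limit in the nonlinear terms $A(F^n_s(x))$ and $B(F^n_s(x))$ where $F^n_s = H*\mu^n_s$: these are discontinuous functions of the (atomic) empirical measure, so one cannot simply invoke continuity of the test functional. The standard device is to rewrite the empirical CDF contributions using the exchangeability of the particles and the explicit form $H*\mu^n_s(X^{i,n}_s) = \frac{1}{n}\#\{j : X^{j,n}_s \le X^{i,n}_s\}$, obtain an Itô/martingale decomposition for $\int g\,\dd\mu^n_t$, control the martingale part (which carries a factor $c_n + \sigma$ and vanishes in the appropriate sense after the empirical averaging because the quadratic variation is $O(1/n)$ after normalization — this is where $c_n \to 0$ is used), and handle the bracket/local-time terms coming from collisions. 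Because $A$ is merely continuous, not Lipschitz, I would use the almost sure convergence of the (random) CDFs $F^n_s \to F^\infty_s$ in $\Lloc$ together with dominated convergence and the uniform boundedness $0 \le F^n_s \le 1$, $\|A\|_\infty, \|B\|_\infty < \infty$, to pass to the limit in~\eqref{eq:weak}; controlling the exceptional set where collisions accumulate, and showing the diffusive correction coming from the local times converges to the $\frac12 \partial_x^2 A(F)$ term rather than to something spurious, is the delicate point and is exactly where condition (D1) (strict monotonicity of $A$) enters, preventing the particles from sticking together in a way that would corrupt the limit.

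Once any limit point $F^\infty$ is identified as a weak solution lying in $\F$ (the membership in $\F$ follows because $\mu^\infty_t$ retains a finite first moment and local integrability of $t\mapsto\int|x|\mu^\infty_t(\dd x)$ by Fatou applied to the uniform moment bound), uniqueness from the first part forces all subsequential limits to agree with the single deterministic $F = H*P(\cdot)$. A standard argument (a tight sequence all of whose limit points equal a fixed deterministic limit converges to that limit) then upgrades subsequential weak convergence to convergence in probability of $t \mapsto \mu^n_t$ to $t \mapsto P(t)$ in $C([0,+\infty),\P(\R))$, which is the assertion. Existence of the weak solution is thereby obtained as a by-product of this convergence. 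Throughout, I would lean on the results quoted for the porous medium case in~\cite{jourdain:porous}, the only genuinely new ingredients being the uniform handling of the extra drift $B$ (bounded, so harmless in all the estimates above) and the verification that the added viscosities $c_n$, while needed for well-posedness of~\eqref{eq:sp} via~\cite{bass}, do not survive in the limit.
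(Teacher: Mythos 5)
Your overall architecture matches the paper's: tightness of $\mathrm{Law}(\mu^n)$ in $\P(C([0,+\infty),\R))$ by Sznitman's exchangeability reduction to one particle plus the Kolmogorov criterion, transfer to the flow $t \mapsto \mu^n_t$ via continuity of the canonical application, identification of subsequential limits as weak solutions in $\F$, and finally uniqueness from Proposition~\ref{prop:unicite-edp} to force convergence in probability. Two points in your identification step are misallocated, however, and the first is a genuine misunderstanding of where the hypotheses are used.

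You assert that condition (D1) (monotonicity of $A$) is what prevents the particles from sticking and corrupting the limit, and that this is where it enters the argument. This is not so: for each finite $n$ the added viscosity $c_n > 0$ already makes the $n$-particle SDE uniformly non-degenerate, so there is no sticking to prevent; and the paper's identification argument (Lemma~\ref{lem:piinfty}, following~\cite[Lemma~1.5]{jourdain:porous}) relies only on the uniform continuity of $a$ and $b$ on $[0,1]$ together with $c_n \to 0$ --- monotonicity of $A$ plays no role there at all. Indeed the paper explicitly records, in the remark after Lemma~\ref{lem:piinfty}, that tightness and concentration of limit points on weak solutions to~\eqref{eq:myPDE} hold without (D1). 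Condition (D1) is needed exclusively in the uniqueness half (the Holmgren adjoint-problem argument of Appendix~\ref{app:unicite}, where it provides positivity of $\tA$ off the small sets $G_\delta$). Related to this, your proposed detour through local times at collisions is not how the identification goes: one tests against $g \in \Cc$, applies It\^o to $\frac{1}{n}\sum_i G(t,X^{i,n}_t)$ with $G(t,y) := \int_y^\infty g(t,x)\,\dd x$, kills the martingale part by its $O(1/n)$ quadratic variation and boundedness of $c_n + \sigma$, and matches the second-order term to $\frac{1}{2}\int A(F^n_s(x))\partial_x^2 g(s,x)\,\dd x$ by observing that the Stieltjes measure $\dd A(F^n_s)$ assigns weight $A(k/n)-A((k-1)/n) = \frac{1}{n}a(\xi_k)$ (mean value theorem) to the $k$-th ordered particle, so uniform continuity of $a$ gives the required $o(1)$ total error after summation; no Tanaka formula or collision bookkeeping is involved at this stage (local times appear only later, for the reordered system in Subsection~\ref{ss:spr}). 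Once you move (D1) to the uniqueness step and replace the local-time picture by the uniform-continuity estimate, your plan coincides with the paper's.
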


The proof of Proposition~\ref{prop:sp} relies on Proposition~\ref{prop:unicite-edp} and Lemmas~\ref{lem:tightness} and~\ref{lem:piinfty}. For all $n \geq 1$, let $\pi'^n$ denote the distribution of $\mu^n$ in $\P(C([0,+\infty),\R))$. In Lemma~\ref{lem:tightness}, we prove that the sequence $(\pi'^n)_{n \geq 1}$ is tight. Since the canonical application $\P(C([0,+\infty),\R)) \to C([0,+\infty),\P(\R))$ is continuous, then the sequence $(\pi^n)_{n \geq 1}$ of the distributions of the random mappings $t \mapsto \mu^n_t$ in $C([0,+\infty),\P(\R))$ is tight.  Let $\pi^{\infty}$ be the limit of a converging subsequence, that we still index by $n$ for convenience. Lemma~\ref{lem:piinfty} combined with Proposition~\ref{prop:unicite-edp} proves that $\pi^{\infty}$ concentrates on a single point $P \in \P_1$, which is such that the function $(t,x) \mapsto (H*P(t))(x)$ is a weak solution of~\eqref{eq:myPDE}.

\begin{prop}\label{prop:unicite-edp}
  Assume that the nondegeneracy condition (D1) holds.
  \begin{enumerate}
    \item Let $T>0$, possibly $T = +\infty$. Let $F^1$ and $F^2 \in \F(T)$, such that for all $g \in \Cc([0,T)\times\R)$, $F^1$ and $F^2$ satisfy~\eqref{eq:weak}. Then $F^1=F^2$ in $\F(T)$.
    \item There is at most one weak solution to the Cauchy problem~\eqref{eq:myPDE} in $\F$.
  \end{enumerate}
\end{prop}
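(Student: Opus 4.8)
The plan is to prove the stronger statement (1), from which (2) follows immediately by taking $T=+\infty$ and noting that a weak solution in $\F$ satisfies~\eqref{eq:weak} for all $g \in \Cc([0,+\infty)\times\R)$, hence in particular for all $g \in \Cc([0,T)\times\R)$. So fix $T>0$ (possibly infinite) and let $F^1, F^2 \in \F(T)$ both satisfy~\eqref{eq:weak} against all test functions compactly supported in $[0,T)\times\R$. Write $w := F^1 - F^2$; subtracting the two weak formulations, $w$ satisfies, for all $g \in \Cc([0,T)\times\R)$,
\begin{equation*}
  \int_\R\int_0^T \Big\{ \tfrac12\big(A(F^1_s(x)) - A(F^2_s(x))\big)\partial_x^2 g(s,x) + \big(B(F^1_s(x)) - B(F^2_s(x))\big)\partial_x g(s,x) + w_s(x)\partial_s g(s,x)\Big\}\dd s\dd x = 0.
\end{equation*}
The key structural point is that, because $A$ is increasing (condition (D1)), the quantity $A(F^1_s(x)) - A(F^2_s(x))$ has the same sign as $w_s(x) = F^1_s(x) - F^2_s(x)$, so one can write $A(F^1) - A(F^2) = \alpha\, w$ with $\alpha := (A(F^1)-A(F^2))/w \geq 0$ (set $\alpha = 0$ where $w=0$), a bounded nonnegative measurable coefficient; similarly $B(F^1) - B(F^2) = \beta\, w$ with $\beta$ bounded. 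This recasts the identity as a statement that $w$ is a distributional solution of the linear (degenerate parabolic) equation $\partial_s w = \tfrac12\partial_x^2(\alpha w) - \partial_x(\beta w)$ with zero initial data, and the goal becomes a uniqueness/vanishing result for such an equation.

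The standard route, and the one the paper advertises as adapted from Wu--Zhao--Yin--Lin~\cite{zhao-livre} and Liu--Wang~\cite{liu}, is a duality (Holmgren-type) argument: to show $\int_\R\int_0^T w_s(x)\varphi(s,x)\dd s\dd x = 0$ for a dense enough class of $\varphi$, one wants to choose the test function $g$ as (an approximate) solution of the backward dual problem $\partial_s g + \tfrac12\alpha\,\partial_x^2 g + \beta\,\partial_x g = -\varphi$ with $g$ vanishing near $t=T$. Since $\alpha$ may degenerate (it only satisfies $\alpha\geq 0$, not $\alpha \geq \underline a$), one first regularizes: replace $\alpha$ by a smooth $\alpha_{\eta,\epsilon} \geq \epsilon > 0$ approximating it, solve the resulting uniformly parabolic backward problem for $g = \g$ on a bounded space-time domain (this is where the macro $\g$ in the preamble is clearly headed), and obtain a priori bounds on $g$ and its derivatives — crucially an $L^\infty$ bound on $g$, an $L^\infty$ or $L^2$ bound on $\partial_x g$, and a bound on $\sqrt{\alpha_{\eta,\epsilon}}\,\partial_x^2 g$ that survives the degeneracy — uniformly in the regularization parameters. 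Plugging $\g$ into the identity for $w$, the terms involving the mismatch $(\alpha - \alpha_{\eta,\epsilon})\partial_x^2 g$ and the cutoff errors near spatial infinity must be controlled using the local integrability of the first moment of $P^i(t)$ (which is exactly why $\F(T)$ is defined with that integrability condition) and then sent to zero; what remains yields $\int\int w\,\varphi = 0$, hence $w \equiv 0$.

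The main obstacle is precisely the degeneracy handling: when $A$ is merely increasing, $\alpha$ can vanish, so the dual equation is only degenerate parabolic and one does not get classical control of $\partial_x^2 g$. The delicate step is to extract, from the regularized problems, estimates that are uniform enough — in particular a bound of the form $\int_0^T\int_\R \alpha_{\eta,\epsilon}(s,x)(\partial_x^2 \g)^2\,\dd x\,\dd s \leq C$ independent of $\epsilon$ — so that the error term $\int\int (\alpha - \alpha_{\eta,\epsilon})\,w\,\partial_x^2\g$ can be shown to vanish in the limit (typically by Cauchy--Schwarz against $\sqrt{\alpha_{\eta,\epsilon}}\,\partial_x^2\g$ together with the fact that $(\alpha-\alpha_{\eta,\epsilon})/\sqrt{\alpha_{\eta,\epsilon}} \to 0$ in the appropriate sense). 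A secondary but routine difficulty is the non-compact spatial domain: one introduces a space cutoff, estimates the commutator terms it generates using the growth/integrability of $F^i$ (equivalently the first-moment control on $P^i$), and removes it at the end. Given the paper's own pointer to Appendix~\ref{app:unicite} and to~\cite{jourdain:porous}, I expect the actual proof to be deferred to the appendix and to follow this regularized-duality scheme essentially verbatim from those references, with the role of $B$ being harmless since $\beta$ is bounded and enters only through a first-order term.
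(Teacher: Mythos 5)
Your proposal correctly identifies the paper's strategy: the mean-value decomposition $A(F^1)-A(F^2)=\alpha w$, $B(F^1)-B(F^2)=\beta w$, the Holmgren/duality reduction to a backward adjoint problem, regularization of the degenerate coefficients to obtain a uniformly parabolic dual problem with solution $\g$, and energy estimates (notably on $\sqrt{\text{diffusion}}\cdot\partial_x^2\g$) combined with Cauchy--Schwarz to kill the error terms. This is essentially verbatim the structure of the proof in Appendix~\ref{app:unicite}; the only points you gloss over are the finer bookkeeping there (a three-parameter regularization with the specific order $\epsilon\to0$, then $\eta\to0$, then $\delta\to0$; a $\delta$-cutoff of the drift ratio $\tilde B/\sqrt{(\eta+\tA)/2}$ on the set where $|F^1-F^2|<\delta$, which is needed because this ratio is not globally bounded and makes the bound $K(\delta)$ on the regularized drift $\delta$-dependent rather than ``harmless''; and the use of the compact support of the test right-hand side $f$ plus Gaussian tail decay of $\g$, rather than a spatial cutoff, to control the unbounded domain), but these are implementation details of exactly the scheme you describe.
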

\begin{proof}
  The second point of the proposition clearly follows from the first point, and the first point is proved in Appendix~\ref{app:unicite}.
\end{proof}

\begin{lem}\label{lem:tightness}
  The sequence $(\pi'^n)_{n \geq 1}$ is tight.
\end{lem}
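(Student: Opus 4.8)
The plan is to establish tightness of $(\pi'^n)_{n\geq 1}$ in $\P(C([0,+\infty),\R))$ via the classical criterion for tightness of laws of $C([0,+\infty),\R)$-valued random variables (Prokhorov plus a compact-containment condition plus a modulus-of-continuity estimate, applied to the empirical measures). Concretely, since $\mu^n$ lives in $\P(C([0,+\infty),\R))$, one should check two things: first, that for each $T>0$ the family is tight when restricted to time horizon $[0,T]$, uniformly enough to glue the horizons together; and second, a uniform moment bound guaranteeing that mass does not escape to infinity in space. The natural route is to appeal to a Sznitman-type criterion: $(\pi'^n)$ is tight as soon as $(\Exp[\mu^n(\phi)])_n$ is bounded for a coercive $\phi$ (space tightness) and a uniform Kolmogorov-type continuity estimate holds for the coordinate processes.

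First I would exploit the structure of the SDE~\eqref{eq:sp}. Each coordinate $X^{i,n}$ satisfies $\dd X^{i,n}_t = b(H*\mu^n_t(X^{i,n}_t))\dd t + (c_n+\sigma(H*\mu^n_t(X^{i,n}_t)))\dd W^i_t$, where $b$ and $\sigma=a^{1/2}$ are continuous on $[0,1]$, hence bounded by constants $\|b\|_\infty$ and $\|\sigma\|_\infty$ depending only on $a,b$; moreover the argument $H*\mu^n_t(X^{i,n}_t)$ always lies in $[0,1]$. Taking $c_n$ bounded (say $c_n\leq 1$ for $n$ large, which is harmless since $c_n\to 0$), the drift and diffusion coefficients of each $X^{i,n}$ are uniformly bounded in $i$ and $n$. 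From this, for any $T>0$ and $q\geq 1$, a standard Burkholder--Davis--Gundy estimate gives $\Exp[|X^{i,n}_t-X^{i,n}_s|^q]\leq C_{q,T}|t-s|^{q/2}$ for $0\leq s\leq t\leq T$ with $C_{q,T}$ independent of $i,n$, and similarly $\Exp[\sup_{t\leq T}|X^{i,n}_t|^q]\leq C_{q,T}(1+\Exp[|X^i_0|^q])$. Since the $X^i_0$ are i.i.d.\ with law $m$, if $m$ has a finite moment of some order $q>1$ we get a uniform-in-$n$ bound $\sup_n\Exp[\mu^n(\varphi_T)]<\infty$ with $\varphi_T(w)=\sup_{t\leq T}|w(t)|^q$, which yields the compact-containment part; if $m$ only has a first moment one truncates/uses a weaker coercive functional. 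Combining the increment bound (with $q$ chosen large enough, e.g.\ $q>2$, so that $q/2>1$ gives Hölder continuity in the Kolmogorov criterion) with the containment bound, the law of each $X^{i,n}$ restricted to $[0,T]$ sits in a fixed compact set of $\P(C([0,T],\R))$ up to $\epsilon$, and then Sznitman's lemma upgrades this to tightness of the laws $\pi'^n$ of the empirical measures $\mu^n$ on $\P(C([0,+\infty),\R))$.

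The main point requiring care — and the step I expect to be the genuine obstacle — is matching the integrability hypothesis: Proposition~\ref{prop:sp} assumes only that $m$ has a \emph{finite first order moment}, whereas the clean BDG argument above wants a moment of order $>2$ to feed the Kolmogorov continuity criterion. The resolution is standard but needs to be spelled out: the modulus-of-continuity estimate for $X^{i,n}$ actually holds for \emph{any} $q\geq 1$ with constant depending only on $\|b\|_\infty,\|\sigma\|_\infty$ (not on $m$ at all), because between times $s$ and $t$ the increment is controlled purely by the bounded coefficients and the Brownian increment; only the bound on $\sup_{t\leq T}|X^{i,n}_t|$ involves $m$, and for that a first moment suffices to get $\sup_n\Exp[\mu^n(\sup_{t\leq T}|w(t)|)]<\infty$, which is the coercivity input needed for Sznitman's criterion in its $L^1$ form. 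So one separates the two roles: the time-regularity estimate uses boundedness of the coefficients (hence no moment assumption on $m$ beyond existence), and the space-tightness estimate uses the first moment of $m$. I would also note that the presence of $c_n>0$ plays no role here beyond guaranteeing that~\eqref{eq:sp} has a well-defined solution (as recalled from~\cite{bass}); since $c_n$ can be taken bounded, all constants above are uniform in $n$.
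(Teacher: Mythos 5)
Your argument is correct and follows the same route as the paper: reduce tightness of $(\pi'^n)_{n\geq 1}$ to tightness of the laws of the tagged particle $X^{1,n}$ via Sznitman's exchangeability criterion, then obtain the latter from the Kolmogorov criterion using boundedness of $b$, $\sigma$ and $(c_n)$. One simplification worth noting: the separate compact-containment step (bounding $\Exp[\sup_{t\leq T}|X^{i,n}_t|]$, and the associated worry about moments of $m$) is not needed. For tightness in $C([0,T],\R)$, the Kolmogorov criterion only requires tightness of the laws of $X^{1,n}_0$ — which is automatic since this law is the fixed distribution $m$ for every $n$ — together with the increment estimate $\Exp[|X^{i,n}_t - X^{i,n}_s|^q]\leq C_{q,T}|t-s|^{q/2}$, and the latter involves no moment of $m$ at all because $X^{i,n}_t - X^{i,n}_s$ is built solely from the bounded coefficients and the Brownian increment. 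So the lemma holds with no integrability hypothesis on $m$, consistent with the fact that the paper's proof never invokes one.
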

\begin{proof}
  Since the distribution of $(X^{1,n}, \ldots, X^{n,n})$ in $C([0,+\infty),\R^n)$ is symmetric, according to Sznitman~\cite[Proposition~2.2, p.~177]{sznitman}, $(\pi'^n)_{n \geq 1}$ is tight if and only if the sequence of the distributions of the variables $X^{1,n} \in C([0,+\infty),\R)$ is tight. This latter fact classically follows from the fact that for all $n \geq 1$, $X_0^{1,n} = X_0^1$ has distribution $m$ on the one hand, and from the Kolmogorov criterion as well as the boundedness of the coefficients $a$ and $b$ and the sequence $(c_n)_{n \geq 1}$ on the other hand.
\end{proof}

\begin{lem}\label{lem:piinfty}
  Under the assumption that $m$ has a finite first order moment, the distribution $\pi^{\infty}$ is concentrated on the set of mappings $P \in \P_1$ such that the function $(t,x) \mapsto (H*P(t))(x)$ is a weak solution to the Cauchy problem~\eqref{eq:myPDE}.
\end{lem}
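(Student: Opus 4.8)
The plan is to establish two facts: (i) $\pi^{\infty}$ is carried by $\P_1$, and (ii) for $\pi^{\infty}$-almost every $P$, the function $F:(t,x)\mapsto H*P(t)(x)$ satisfies the weak formulation~\eqref{eq:weak}. For (i), I would first derive a first-moment bound that is uniform in $n$: since $b$, $\sigma=a^{1/2}$ and $(c_n)$ are bounded, taking expectations in~\eqref{eq:sp} and using the It\^o isometry gives $\Exp|X^{1,n}_t|\le\Exp|X^1_0|+Ct+C\sqrt{t}=:\psi(t)$, with $\psi$ continuous and locally bounded because $m$ has a finite first moment; by exchangeability, $\Exp\int_{\R}|x|\mu^n_t(\dd x)=\Exp|X^{1,n}_t|\le\psi(t)$. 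Since for each $N$ and each $t$ the map $Q(\cdot)\mapsto\int_{\R}(|x|\wedge N)Q(t)(\dd x)$ is bounded continuous on $C([0,+\infty),\P(\R))$, passing to the limit along the subsequence and letting $N\to+\infty$ by monotone convergence yields $\Exp_{\pi^{\infty}}\int_{\R}|x|P(t)(\dd x)\le\psi(t)$. Using the weak continuity of $t\mapsto P(t)$ and the lower semicontinuity of $Q\mapsto\int_{\R}|x|Q(\dd x)$, one upgrades this to: for $\pi^{\infty}$-a.e.\ $P$ and every $t\ge0$, $\int_{\R}|x|P(t)(\dd x)\le\psi(t)<+\infty$; since moreover $t\mapsto\int_{\R}|x|P(t)(\dd x)$ is then measurable and dominated by the locally bounded $\psi$, it is locally integrable, i.e.\ $P\in\P_1$.

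For (ii), fix $g\in\Cc([0,+\infty)\times\R)$ and $t\ge0$, and let $\Phi^g_t(F)$ be the difference between the two sides of~\eqref{eq:weak}, so that $F$ solves~\eqref{eq:weak} for this $g$ and $t$ iff $\Phi^g_t(F)=0$. Because $A,B,a,b$ are bounded on $[0,1]$ and $g$ is compactly supported, $P(\cdot)\mapsto\Phi^g_t(H*P(\cdot))$ is bounded on $C([0,+\infty),\P(\R))$, and it is continuous: each of its terms rewrites as an integral against $P(s)$ of a bounded continuous function, the $A$- and $B$-terms after the change of variables to the pseudo-inverse used in~\eqref{eq:WinvCDF} (for instance $\int_{\R}A(H*P(s)(x))\phi(x)\dd x=\int_0^1 a(v)\Psi(F_{P(s)}^{-1}(v))\dd v$ with $\Psi(y):=\int_y^{+\infty}\phi(x)\dd x$ bounded continuous). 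I would then apply It\^o's formula in~\eqref{eq:sp} to $G(t,X^{i,n}_t)$, where $G(t,y):=\int_y^{+\infty}g(t,x)\dd x$ is smooth, bounded and has bounded derivatives $\partial_y^kG=-\partial_x^{k-1}g$, then sum over $i$ and divide by $n$; using $\int_{\R}G(s,\cdot)\,\dd\mu^n_s=\int_{\R}g(s,x)H*\mu^n_s(x)\dd x$ and $\dd\langle X^{i,n}\rangle_s=(c_n+\sigma(H*\mu^n_s(X^{i,n}_s)))^2\dd s$, this produces an exact identity which, once the terms defining $\Phi^g_t$ are subtracted, reads
\[
  \Phi^g_t\bigl(H*\mu^n(\cdot)\bigr)=\int_{\R}g(0,x)\bigl(H*\mu^n_0(x)-H*m(x)\bigr)\dd x-\int_0^t\Bigl(D^n_s+\tfrac12\tilde{D}^n_s-R^n_s\Bigr)\dd s+M^n_t,
\]
where $M^n_t:=-\tfrac1n\sum_{i=1}^n\int_0^t g(s,X^{i,n}_s)(c_n+\sigma(H*\mu^n_s(X^{i,n}_s)))\,\dd W^i_s$ is a square-integrable martingale, $R^n_s$ gathers the $c_n$-dependent terms $\tfrac12\int_{\R}\partial_xg(s,y)(2c_n\sigma+c_n^2)\,\mu^n_s(\dd y)$, and $D^n_s$, $\tilde{D}^n_s$ are the discrepancies between the integrals $\int_{\R}g(s,y)b(H*\mu^n_s(y))\,\mu^n_s(\dd y)$, $\int_{\R}\partial_xg(s,y)a(H*\mu^n_s(y))\,\mu^n_s(\dd y)$ delivered by It\^o and the cumulative-distribution-function-form integrals $-\int_{\R}B(H*\mu^n_s(x))\partial_xg(s,x)\dd x$, $-\int_{\R}A(H*\mu^n_s(x))\partial_x^2g(s,x)\dd x$ occurring in $\Phi^g_t$.

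It then remains to check that each term on the right-hand side tends to $0$ in $L^1(\Pr)$: the initial-condition error is $O(n^{-1/2})$ because $\Exp|H*\mu^n_0(x)-H*m(x)|\le(4n)^{-1/2}$; $R^n_s$ and $M^n_t$ vanish since $c_n\to0$ and, by It\^o isometry, $\Exp[(M^n_t)^2]\le Ct/n$; and, writing $\mu^n_s=\tfrac1n\sum_k\delta_{y_k}$ with $y_1\le\cdots\le y_n$, Abel summation gives $D^n_s=\sum_k g(s,y_k)\bigl(\tfrac1n b(k/n)-\int_{(k-1)/n}^{k/n}b(v)\dd v\bigr)$, whence $|D^n_s|\le\|g\|_{\infty}\,\omega_b(1/n)$ with $\omega_b$ the modulus of continuity of $b$ on $[0,1]$, and likewise $|\tilde{D}^n_s|\le\|\partial_xg\|_{\infty}\,\omega_a(1/n)$. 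Hence $\Exp|\Phi^g_t(H*\mu^n(\cdot))|\to0$; since $\Phi^g_t(H*\cdot)$ is bounded continuous and $\pi^n\to\pi^{\infty}$, this forces $\Exp_{\pi^{\infty}}|\Phi^g_t(H*\cdot)|=0$, i.e.\ $\Phi^g_t(H*P(\cdot))=0$ for $\pi^{\infty}$-a.e.\ $P$. Intersecting these events over a countable family of $g$'s dense in $\Cc([0,+\infty)\times\R)$ and over the rational $t$, and invoking the joint continuity of $(g,t)\mapsto\Phi^g_t(F)$ (for $F$ valued in $[0,1]$), one concludes that $\pi^{\infty}$-a.e.\ $P$ is such that $F=(t,x)\mapsto H*P(t)(x)$ is a weak solution of~\eqref{eq:myPDE}; combined with (i), this proves the lemma. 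This route essentially parallels the treatment of the porous medium equation in~\cite{jourdain:porous}.

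The step I expect to be the main obstacle is the control of the discrepancies $D^n_s$ and $\tilde{D}^n_s$: because $\mu^n_s$ carries atoms, the integration by parts turning $\int g\,b(H*\mu^n_s)\,\dd\mu^n_s$ into $-\int B(H*\mu^n_s)\,\partial_xg\,\dd x$ is only approximate, and one must verify that the resulting error---a Riemann-sum-versus-integral defect on the uniform mesh $\{k/n\}$---genuinely vanishes, which rests on the uniform continuity of $a$ and $b$ on $[0,1]$ and requires a little extra care at the Lebesgue-null set of collision times where several of the $y_k$ coincide. Checking the continuity of the defect functional $\Phi^g_t$ on $C([0,+\infty),\P(\R))$ through the pseudo-inverse representation is the other point that needs attention.
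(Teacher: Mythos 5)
Your proposal is correct and follows essentially the same route as the paper: a uniform-in-$n$ first-moment bound passed to the limit for the $\P_1$-concentration, and the It\^o/integration-by-parts identity from \cite[Lemma 1.5]{jourdain:porous} (together with the uniform continuity of $a,b$ and $c_n\to 0$) for the weak formulation, which the paper simply cites rather than rewriting. The only cosmetic difference is in part (i): the paper puts $\sup_{s\in[0,t]}\int(|x|\wedge M)\,\mu(s)(\dd x)$ inside the expectation so that a single null set works for all $s\le t$ at once, whereas you bound the moment at each fixed $t$ and then upgrade to all $t$ via lower semicontinuity of $Q\mapsto\int|x|\,Q(\dd x)$ along a countable dense set of times; both are valid.
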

\begin{proof}
  We first prove that $\pi^{\infty}$ concentrates on $\P_1$. Let $\mu^{\infty}$ be a variable in $C([0,+\infty),\P(\R))$ with distribution $\pi^{\infty}$. We will prove that for all $t \geq 0$,
  \begin{equation*}
    \sup_{s \in [0,t]} \int_{\R} |x|\mu^{\infty}(s)(\dd x) < +\infty \qquad \text{a.s.},
  \end{equation*}
  so that taking $t$ in a countable unbounded subset of $[0,+\infty)$ yields $\mu^{\infty} \in \P_1$ almost surely.
  
  Let $t \geq 0$. For all $M \geq 0$, the function $f_M : \mu \mapsto \sup_{s \in [0,t]} \int_{\R} (|x|\wedge M)\mu(s)(\dd x)$ is continuous and bounded on $C([0,+\infty),\P(\R))$. For fixed $n$,
  \begin{equation*}
    \begin{split}
      \Exp(f_M(\mu^n)) & \leq \frac{1}{n} \sum_{i=1}^n \Exp\left(\sup_{s \in [0,t]}|X_s^{i,n}|\right)\\
      &\leq \int_{\R} |x|m(\dd x) + t||b||_{\infty} + \frac{1}{n} \sum_{i=1}^n\left[\Exp\left(\sup_{s \in [0,t]} \left| \int_0^s \left(c_n+\sigma(H*\mu_r^n(X_r^{i,n}))\right)\dd W_r^i\right|^2\right)\right]^{1/2}\\
      &\leq C,
    \end{split}
  \end{equation*}
  where we have used the Cauchy-Schwarz inequality in the second line and the Doob inequality as well as the fact that $m$ has a finite first order moment in the third line. The constant $C$ depends neither on $M$ nor on $n$. As a consequence, $\liminf_{M \to +\infty} \Exp(f_M(\mu^{\infty})) \leq C$ and by Fatou's lemma,
  \begin{equation*}
    C \geq \Exp\left(\liminf_{M \to +\infty} f_M(\mu^{\infty})\right) \geq \Exp\left(\sup_{s \in [0,t]} \liminf_{M \to +\infty} \int_{\R} (|x|\wedge M)\mu^{\infty}(s)(\dd x)\right).
  \end{equation*}
  By the monotone convergence theorem, 
  \begin{equation*}
    \liminf_{M \to +\infty} \int_{\R} (|x|\wedge M)\mu^{\infty}(s)(\dd x) = \lim_{M \to +\infty} \int_{\R} (|x|\wedge M)\mu^{\infty}(s)(\dd x) = \int_{\R} |x|\mu^{\infty}(s)(\dd x),
  \end{equation*}
  so that $\Exp\left(\sup_{s \in [0,t]} \int_{\R} |x|\mu^{\infty}(s)(\dd x)\right) \leq C$, which yields the expected result.
  
  It now remains to prove that $(t,x) \mapsto (H*\mu^{\infty}(t))(x)$ is almost surely a weak solution of the Cauchy problem~\eqref{eq:myPDE}. The computation is made for the porous medium equation in~\cite[Lemma 1.5]{jourdain:porous} and can be straightforwardly extended; it relies on the uniform continuity of $a$ and $b$ on $[0,1]$ and the fact that $c_n \to 0$.
\end{proof}

\begin{rk}
  Without assuming neither (D1) nor the existence of a first order moment for $m$, one can still prove that the sequence $(\pi^n)_{n \geq 1}$ is tight and that the limit of any converging subsequence concentrates on weak solutions to the Cauchy problem~\eqref{eq:myPDE}, that of course not necessarily belong to $\F$. Thus, the existence of weak solutions holds under very weak assumptions.
\end{rk}

\begin{rk}\label{rk:shkolnikov}
  The law of large numbers for the sequence of mappings $t \mapsto \mu^n_t$ stated in Proposition~\ref{prop:sp} has recently been addressed under more restrictive conditions on the initial condition $m$ and the coefficients $a$ and $b$. In~\cite{shkolnikov}, Shkolnikov studies the particle system~\eqref{eq:sp} with the specific condition that the process of spacings between two particles with consecutive positions in $\R$ be stationary (the description of the stationary distribution is given in~\cite{ipbkf}). Then in the case where $a$ is affine and $b$ is $C^1$, with $b'$ uniformly negative, the sequence of mappings $t \mapsto \mu^n_t$ is proven to converge in probability, in $C([0,+\infty),\P(\R))$, to the unique mapping $t \mapsto P(t)$ such that $(t,x) \mapsto (H*P(t))(x)$ is a weak solution to the Cauchy problem~\eqref{eq:myPDE} for a specified initial condition $F_0$. In place of our Proposition~\ref{prop:unicite-edp}, the author uses Gilding's theorem for uniqueness~\cite[Theorem~4]{gilding} and therefore needs to assume that any weak solution to the Cauchy problem is continuous on $[0,+\infty)\times\R$ when $m$ does not weight points. 
  
  In the more recent article by Dembo, Shkolnikov, Varadhan and Zeitouni~\cite{dsvz}, the stationarity assumption is removed and the continuity of $F$ is obtained as a consequence of mild regularity and nondegeneracy assumptions on the coefficients of the Cauchy problem~\eqref{eq:myPDE}. More precisely, the authors establish a large deviation principle for the sequence $(\pi^n)_{n \geq 1}$, with a rate function that is infinite on the set of mappings $t \mapsto P(t)$ such that the function $(t,x) \mapsto (H*P(t))(x)$ is discontinuous. They also prove that a zero of the rate function is a mapping $t \mapsto P(t)$ such that the function $(t,x) \mapsto (H*P(t))(x)$ is a continuous weak solution to~\eqref{eq:myPDE}, and deduce the law of large numbers as a consequence of Gilding's uniqueness theorem.
  
  Both approaches heavily rely on the continuity of the solution $F$ to~\eqref{eq:myPDE}, as it is a crucial condition to use Gilding's uniqueness theorem. While we address the regularity of $F$ in Lemma~\ref{lem:regul} below, we insist on the fact that our proof of Proposition~\ref{prop:sp} does not require that $F$ be continuous, which allows us to relax the regularity and nondegeneracy assumptions on $m$, $a$ and $b$ with respect to~\cite{shkolnikov,dsvz}. However, the regularity of $F$ plays a more important role in establishing the law of large numbers for the sequence of empirical distributions $\mu^n \in \P(C([0,+\infty),\R))$ in Subsection~\ref{ss:nlmp}. Therefore, in the proof of Lemma~\ref{lem:existnlmp}, we prove that, under the nondegeneracy condition (D2), the function $F_t$ is continuous on $\R$, $\dd t$-almost everywhere.
\end{rk}

We conclude this subsection by discussing the regularity of the probabilistic solution $F$. For all finite $T>0$, we denote by $C^{1,2}_{\Lip}([0,T]\times\R)$ the set of $C^{1,2}$ functions on $[0,T] \times \R$ that are bounded together with their derivatives. For all $l > 0$, the Hölder spaces $H^l(\R)$ and $H^{l/2,l}([0,T]\times\R)$ are defined as in~\cite[p.~7]{lady}.

\begin{lem}\label{lem:regul}
  Assume that the uniform ellipticity condition (D3) and the regularity condition (R2) hold, that $m$ has a finite first order moment and that $H*m$ is in the Hölder space $H^l(\R)$, with $l=3+\beta$. Then for all finite $T>0$, the probabilistic solution $F$ to~\eqref{eq:myPDE} is in $C^{1,2}_{\Lip}([0,T]\times\R)$. In particular, it is a classical solution to~\eqref{eq:myPDE}.
\end{lem}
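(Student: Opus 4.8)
The plan is to remove the quadratic gradient nonlinearity of~\eqref{eq:myPDE} by the change of unknown $v=A(F)$, to solve the resulting regular quasilinear Cauchy problem by the parabolic Schauder theory of~\cite{lady}, and then to identify the solution so obtained with the probabilistic solution $F$ by means of the uniqueness statement of Proposition~\ref{prop:unicite-edp}.

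First I would record the algebraic reduction. Under (R2) the primitive $A$ belongs to $C^{3+\beta}([0,1])$, and by (D3) it is a diffeomorphism onto $[0,A(1)]$ with $A'=a\geq\ua$; hence $A^{-1}\in C^{3+\beta}([0,A(1)])$ and the maps $u\mapsto a(A^{-1}(u))$ and $u\mapsto b(A^{-1}(u))$ are $C^{2+\beta}$, resp.\ $C^{1+\beta}$, on $[0,A(1)]$. A formal computation shows that $F$ solves~\eqref{eq:myPDE} if and only if $v:=A(F)$ solves
\[
  \partial_t v=\tfrac12\,a(A^{-1}(v))\,\partial_x^2 v-b(A^{-1}(v))\,\partial_x v,\qquad v(0,\cdot)=A(H*m).
\]
This is a uniformly parabolic quasilinear equation---the coefficient of $\partial_x^2 v$ stays in $[\ua/2,\|a\|_{\infty}/2]$---whose coefficients depend on $v$ but \emph{not} on $\partial_x v$, so that no quadratic gradient term occurs. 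The chain rule, together with $H*m\in H^{3+\beta}(\R)$, shows that $v_0:=A(H*m)$ lies in $H^{3+\beta}(\R)$ with a norm uniform under spatial translations, and that $v_0$ takes its values in $[0,A(1)]$.

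Next, fix a finite $T>0$. I would invoke the existence theory for quasilinear parabolic Cauchy problems of this form (a Banach fixed point built on the interior Hölder estimate and the linear Schauder estimate, the maximum principle furnishing the a priori sup-bound; see~\cite[Ch.~V]{lady}): it produces a unique bounded classical solution $\tilde v$ on $[0,T]\times\R$, lying in $H^{1+\beta/2,2+\beta}([0,T]\times\R)$, that is, $C^{1,2}$ with $\partial_t\tilde v$ and $\partial_x^2\tilde v$ bounded and Hölder on $[0,T]\times\R$; the bounds are global in $x$ because the coefficients depend on $x$ only through $\tilde v$ and $v_0$ has a translation-uniform norm. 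The maximum principle forces $\tilde v$ to take its values in $[\min v_0,\max v_0]\subseteq[0,A(1)]$, whence $\tF:=A^{-1}(\tilde v)$ is well defined, belongs to $C^{1,2}_{\Lip}([0,T]\times\R)$, takes its values in $[0,1]$, satisfies $\tF(0,\cdot)=H*m$, and---reversing the computation above---solves~\eqref{eq:myPDE} in the classical sense, hence also in the weak sense~\eqref{eq:weak}.

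To conclude through Proposition~\ref{prop:unicite-edp} it remains to check that $\tF\in\F(T)$. For each $t$, the map $x\mapsto\tF(t,x)$ is nondecreasing, by the maximum principle applied to the linear parabolic equation satisfied by $w:=\partial_x\tilde v$ (obtained by differentiating the equation for $\tilde v$ once in $x$; its coefficients are bounded since $\tilde v,\partial_x\tilde v,\partial_x^2\tilde v$ are), whose initial datum $\partial_x v_0=a(H*m)\,(H*m)'$ is nonnegative---indeed $\partial_x\tF=w/a(\tF)\geq0$. That $\tF(t,\cdot)$ has limits $0$ and $1$ at $\mp\infty$ and a finite first moment, depending continuously on $t$, is inherited from the corresponding properties of $H*m$ by comparison with suitable space-dependent barriers, so that $\tF=H*\tP$ for some $\tP\in\P_1(T)$; Proposition~\ref{prop:unicite-edp}(1) then gives $\tF=F$ on $[0,T)$, and since $T$ was arbitrary, $F$ is, on every $[0,T]\times\R$ with $T<\infty$, the classical solution $\tF$, hence belongs to $C^{1,2}_{\Lip}([0,T]\times\R)$. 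The main difficulty lies in the construction of $\tilde v$: applied directly to the equation for $F$, the quasilinear theory would demand an a priori gradient estimate because of the quadratic term $\tfrac12 a'(F)(\partial_x F)^2$, and the substitution $v=A(F)$ is precisely what places the problem in the regime with no gradient nonlinearity where~\cite{lady} applies off the shelf; a secondary, more technical point is the verification that $\tF$ remains a cumulative distribution function with a finite first moment, which is what allows the uniqueness proposition to be invoked.
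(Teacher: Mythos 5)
Your strategy is to transform away the quadratic gradient nonlinearity by setting $v=A(F)$, solve the resulting quasilinear equation by LSU Chapter V, then identify $\tF=A^{-1}(\tilde v)$ with the probabilistic solution via Proposition~\ref{prop:unicite-edp}. The paper does not change unknown: it keeps $F$ and writes the equation in divergence form, $\partial_t\tF=\partial_x\bigl(\tfrac12 a(\tF)\partial_x\tF - B(\tF)\bigr)$, which is exactly the structure covered by~\cite[Theorem~8.1, p.~495]{lady} and which handles the gradient term directly (the $\tfrac12 a'(F)(\partial_x F)^2$ contribution is built into the divergence structure). Your substitution is a legitimate alternative route to the existence of a smooth solution, and the algebra is correct; but the motivation you give (``the quasilinear theory would demand an a priori gradient estimate'') does not distinguish your approach from the paper's, since the divergence-form theorem supplies that estimate anyway.

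The genuine gap is the last step, where you need $\tF\in\F(T)$ to invoke Proposition~\ref{prop:unicite-edp}. Membership in $\F(T)$ requires, for every $t$, that $\tF_t$ be the cumulative distribution function of a probability measure with finite first moment, with the moment locally integrable and the flow $t\mapsto\dd\tF_t$ continuous in $\P(\R)$. Monotonicity and the $[0,1]$ bound are fine, but you dispose of the mass-one property, the vanishing/saturating tails, the first-moment bound and the weak continuity in a single sentence, appealing to ``comparison with suitable space-dependent barriers'' without constructing them. This is where the real work lies, and it is not a cosmetic omission: for instance the first moment does \emph{not} follow from $H^{l/2,l}$ regularity and monotonicity alone, and building supersolutions that control the tails of $\tF_t$ in terms of those of $H*m$ requires an argument comparable in length to the rest of the proof. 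The paper avoids barriers entirely: it observes that $\tp:=\partial_x\tF$ is a nonnegative measure-valued solution of a linear Fokker--Planck equation with measurable bounded coefficients, invokes Figalli's superposition theorem~\cite[Theorem~2.6]{figalli} to represent $\tp_t$ as the law of a weak solution to an SDE with bounded drift and diffusion, and then reads off total mass one, finite first moment, and continuity of the flow directly from the SDE. You should either carry out that probabilistic identification, or supply the barrier construction you allude to; as written, the proof does not establish $\tF\in\F(T)$, and Proposition~\ref{prop:unicite-edp} cannot be applied.
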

\begin{proof}
  Fix a finite $T>0$. Then owing to the assumptions (D3), (R2) and on the regularity of $H*m$, the classical result of Lady\v{z}enskaja, Solonnikov and Ural'ceva~\cite[Theorem~8.1, p.~495]{lady} ensures that the Cauchy problem in divergence form
  \begin{equation}\label{eq:divCauchy}
    \left\{\begin{aligned}
      & \partial_t \tilde{F}_t(x) = \partial_x\left(\frac{1}{2}a(\tilde{F}_t(x))\partial_x \tilde{F}_t(x) - B(\tilde{F}_t(x))\right),\\
      & \tilde{F}_0(x) = H*m(x),
    \end{aligned}\right.
  \end{equation}
  admits a classical bounded solution $\tilde{F}$, which belongs to the Hölder space $H^{l/2,l}([0,T]\times\R)$, with $l=3+\beta$. Certainly, $\tilde{F}$ satisfies~\eqref{eq:weak} for all $g \in \Cc([0,T)\times\R)$. Let us now prove that $\tilde{F} \in \F(T)$. On the one hand, by the maximum principle~\cite[Theorem~2.5, p.~18]{lady}, for all $t \in [0,T]$, $||\tilde{F}_t||_{\infty} \leq 1$. On the other hand, the space derivative $\tilde{p} := \partial_x \tilde{F}$ is $C^{1,2}$ on $[0,T]\times\R$ and satisfies the linear parabolic equation
  \begin{equation*}
    \partial_t \tilde{p}_t(x) = \tilde{a}(t,x) \partial_x^2 \tilde{p}_t(x) + \tilde{b}(t,x) \partial_x \tilde{p}_t(x) + \tilde{c}(t,x) \tilde{p}_t(x),
  \end{equation*}
  where
  \begin{equation*}
    \begin{aligned}
      & \tilde{a}(t,x) := \frac{1}{2}a(\tilde{F}_t(x)),\\
      & \tilde{b}(t,x) := \frac{3}{2}a'(\tilde{F}_t(x))\partial_x \tilde{F}_t(x) - b(\tilde{F}_t(x)),\\
      & \tilde{c}(t,x) := \frac{1}{2}a''(\tilde{F}_t(x))(\partial_x \tilde{F}_t(x))^2 - b'(\tilde{F}_t(x))\partial_x \tilde{F}_t(x).
    \end{aligned}
  \end{equation*}
  The coefficients $\tilde{a}$, $\tilde{b}$ and $\tilde{c}$ are continuous and bounded in $[0,T] \times \R$ and, due to the condition (D3), the operator is parabolic. By the maximum principle~\cite[Theorem~9, p.~43]{friedman}, and since $\tilde{p}_0 \geq 0$, then $\tilde{p}_t(x) \geq 0$ for all $(t,x) \in [0,T]\times\R$.
  
  As a consequence, for all $t \in [0,T]$, $\tilde{p}_t$ is the density of a nonnegative bounded measure on $\R$, with total mass lower than $1$. Let us now prove that the mapping $t \mapsto \tilde{p}_t(x)\dd x$ is continuous for the topology of weak convergence. Since $\tilde{p}$ is continuous on $[0,T]\times\R$, the mapping $t \mapsto \tilde{p}_t(x)\dd x$ is continuous for the topology of vague convergence. Besides, as, for all $s,t \in [0,T]$, $\sup_{x \in \R} |F_t(x)-F_s(x)| \leq ||\partial_t F||_{\infty}|t-s|$, the total mass $t \mapsto \int_{\R} \tilde{p}_t(x)\dd x$ is continuous.
  
  Hence, the continuous mapping $t \mapsto \tilde{p}_t(x)\dd x$ is a measure-valued solution to the linear Fokker-Planck equation
  \begin{equation*}
    \partial_t \mu_t = \frac{1}{2}\partial_x^2\big(a(\tilde{F}_t(x))\mu_t\big) - \partial_x\big(b(\tilde{F}_t(x))\mu_t\big), 
  \end{equation*}
  the coefficients of which are measurable and bounded functions on $[0,T]\times\R$. Therefore by Figalli~\cite[Theorem~2.6]{figalli}, and since $\tilde{p}_0$ is a probability density, then for all $t \in [0,T]$, $\tilde{p}_t$ is the density of the distribution of $\tilde{X}_t$, where $(\tilde{X}_t)_{t \in [0,T]}$ is a weak solution to the stochastic differential equation
  \begin{equation*}
    \tilde{X}_t = \tilde{X}_0 + \int_0^t b(\tilde{F}_s(\tilde{X}_s))\dd s + \int_0^t \sigma(\tilde{F}_s(\tilde{X}_s))\dd \tilde{W}_s,
  \end{equation*}
  where $\tilde{X}_0$ has distribution $m$ and is independent of the Brownian motion $\tilde{W}$. Now one easily deduces from the assumption that $m$ has a finite first order moment and from the boundedness of $\sigma$ and $b$ that $\tilde{F} \in \F(T)$. Therefore, by the first part of Proposition~\ref{prop:unicite-edp}, $\tilde{F}$ is the restriction to $[0,T]$ of the probabilistic solution $F$ to~\eqref{eq:myPDE} given by Proposition~\ref{prop:sp}. Hence, $F \in C^{1,2}_{\Lip}([0,T]\times\R)$ and the fact that $F$ is a classical solution to~\eqref{eq:myPDE} now follows from the fact that $T$ is arbitrarily large.
\end{proof}

\begin{rk}
  The regularity assumption on the initial condition $H*m$ is far from being necessary for the probabilistic solution $F$ to have the classical regularity. For instance, it is known for the case of the viscous conservation law that $F$ has the classical regularity even for a discontinuous initial condition $H*m$ (see~\cite[Corollary~1.2]{jm}).
\end{rk}

\subsection{The nonlinear martingale problem}\label{ss:nlmp} The propagation of chaos result of Proposition~\ref{prop:sp} only deals with the flow of time marginals of the empirical distribution $\mu^n$. A natural further question is the convergence in $\P(C([0,+\infty),\R))$ towards the solution to a proper {\em nonlinear martingale problem}. 

Recall that the distribution of the random variable $\mu^n$ in $\P(C([0,+\infty),\R))$ is denoted by $\pi'^n$, and by Lemma~\ref{lem:tightness}, it is tight. Let $X$ refer to the canonical process on the probability space $C([0,+\infty),\R)$, namely $X_t(\omega) := \omega_t$ for all $\omega \in C([0,+\infty),\R)$. We shall also denote by $C^2_{\mathrm{b}}(\R)$ the space of $C^2$ functions $\phi: \R \to \R$ such that $\phi$, $\phi'$ and $\phi''$ are bounded.

\begin{defi}\label{defi:nlmp}
  A probability distribution $P \in \P(C([0,+\infty),\R))$ is called a solution to the nonlinear martingale problem if:
  \begin{itemize}
    \item $P_0 = m$;
    \item for all $\phi \in C^2_{\mathrm{b}}(\R)$, the process $M^{\phi}$ defined by $M_t^{\phi} := \phi(X_t) - \phi(X_0) - \int_0^t L(P_s)\phi(X_s)\dd s$ is a $P$-martingale, where, for all $\mu \in \P(\R)$, $L(\mu) \phi(x) := b(H*\mu(x))\phi'(x) + (1/2)a(H*\mu(x))\phi''(x)$;
    \item $\dd t$-almost everywhere, $P_t$ does not weight points.
  \end{itemize}
\end{defi}

Following~\cite[Lemma~1.2]{jourdain:porous}, if $P$ is a solution to the nonlinear martingale problem, then the function $(t,x) \mapsto H*P_t(x)$ is a weak solution to the Cauchy problem~\eqref{eq:myPDE}. Besides, since the coefficients $a$ and $b$ are bounded, it is easily seen that if $m$ has a finite first order moment, then this solution belongs to $\F$, therefore it coincides with the probabilistic solution $F$ given by Proposition~\ref{prop:sp}. 

Owing to Lévy's characterization of the Brownian motion, a probability distribution solving the nonlinear martingale problem is the distribution of a weak solution to the nonlinear stochastic differential equation~\eqref{eq:nldp}. Reciprocally, the distribution $P$ of a weak solution to~\eqref{eq:nldp} is a solution to the nonlinear martingale problem if and only if, $\dd t$-almost everywhere, $P_t$ does not weight points. When there exists a unique solution to the nonlinear martingale problem, we will refer to the associated weak solution $X$ of~\eqref{eq:nldp} as the {\em nonlinear diffusion process}.

\sk
Let us first investigate the existence of a solution to the nonlinear martingale problem.

\begin{lem}\label{lem:existnlmp}
  Under the nondegeneracy condition (D2) and the assumption that $m$ has a finite first order moment, the limit of any converging subsequence of $(\pi'^n)_{n \geq 1}$ concentrates on the set of solutions to the nonlinear martingale problem.
\end{lem}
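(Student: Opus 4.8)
The plan is to run the usual closure argument for nonlinear martingale problems, checking the three bullets of Definition~\ref{defi:nlmp} $\pi^{\infty}$-almost surely, where $\pi^{\infty}$ denotes the limit of the fixed converging subsequence (still written $(\pi'^n)$) and $P$ the canonical random variable on $\P(C([0,+\infty),\R))$ with law $\pi^{\infty}$. Since $\pi^{\infty}$ is a limit point of $(\pi'^n)$, its image under the continuous canonical application is a limit point of $(\pi^n)$; but the hypotheses of Proposition~\ref{prop:sp} hold (condition (D2) being stronger than (D1)), so $(\pi^n)$ converges to the Dirac mass at the deterministic flow $t\mapsto P(t)$ for which $F:(t,x)\mapsto H*P(t)(x)$ is the probabilistic solution of~\eqref{eq:myPDE}. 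Hence, $\pi^{\infty}$-a.s., the flow of time marginals of $P$ coincides with $t\mapsto P(t)$; in particular $P_0=P(0)=m$, which is the first bullet, and the third bullet will follow once we know that $P(t)$ is atomless for a.e.\ $t$.

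\emph{Atomlessness of the marginals.} The function $F_t=H*P(t)$ is a $[0,1]$-valued, nondecreasing, right-continuous weak solution of~\eqref{eq:myPDE}. Standard energy estimates for this degenerate parabolic equation --- obtained by testing~\eqref{eq:weak} against a space--time mollification of $\rho_R(x)A(F_s(x))$, with $\rho_R$ a smooth spatial cutoff, integrating by parts, and letting the mollification parameter vanish --- show that for a.e.\ $t$ the nonnegative measure $\partial_x F_t$ assigns no mass to any single point $x$ at which $a(F_t(x))>0$. Under (D2), $a(u)=0$ forces $u=0$, while any jump of the nondecreasing function $F_t$ occurs at a point $x_0$ with $F_t(x_0)>0$; hence, for a.e.\ $t$, $F_t$ is continuous, i.e.\ $P(t)$ is atomless. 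This is the continuity statement announced in Remark~\ref{rk:shkolnikov}, and it yields the third bullet.

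\emph{Martingale property.} Fix $\phi\in C^2_{\mathrm{b}}(\R)$, reals $0\le s_1<\cdots<s_k\le s<t$ and $\psi\in C_{\mathrm{b}}(\R^k)$; it is enough (for $\phi$ ranging over a countable suitably rich subset of $C^2_{\mathrm{b}}(\R)$, rational times, $\psi$ in a countable family) to prove $\Phi(P)=0$ $\pi^{\infty}$-a.s., where
\[
  \Phi(Q):=\int_{C([0,+\infty),\R)}\Bigl(\phi(w_t)-\phi(w_s)-\int_s^t L(Q_r)\phi(w_r)\,\dd r\Bigr)\psi(w_{s_1},\ldots,w_{s_k})\,Q(\dd w).
\]
The obstruction is that $\mu\mapsto H*\mu(x)$, hence $\Phi$, fails to be continuous. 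I would therefore fix a $[0,1]$-valued Lipschitz approximation $H_{\epsilon}$ of $H$ with $H-H_{\epsilon}$ supported in $[-\epsilon,\epsilon]$, set $L_{\epsilon}(\mu)\phi(x):=b(H_{\epsilon}*\mu(x))\phi'(x)+\tfrac12 a(H_{\epsilon}*\mu(x))\phi''(x)$, and let $\Phi_{\epsilon}$ be $\Phi$ with $L$ replaced by $L_{\epsilon}$. Then $\Phi_{\epsilon}$ is bounded and continuous on $\P(C([0,+\infty),\R))$, since $\mu\mapsto H_{\epsilon}*\mu$ is continuous for the weak topology uniformly in the space variable, the canonical application is continuous, and the $\dd r$-integral passes to the limit by dominated convergence. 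Applying It\^o's formula to each $X^{i,n}$ in~\eqref{eq:sp} and using $\sigma^2=a$, one writes $\Phi_{\epsilon}(\mu^n)$ as the empirical mean over $i$ of $\psi(X^{i,n}_{s_1},\ldots,X^{i,n}_{s_k})$ times $\int_s^t\bigl(L(\mu^n_r)\phi-L_{\epsilon}(\mu^n_r)\phi\bigr)(X^{i,n}_r)\,\dd r+\tfrac12\int_s^t\bigl(c_n^2+2c_n\sigma(H*\mu^n_r(X^{i,n}_r))\bigr)\phi''(X^{i,n}_r)\,\dd r+N^i_{s,t}$, where $N^i$ is a stochastic-integral martingale increment driven by $W^i$. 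Using the pairwise orthogonality of the $N^i$ (the $W^i$ being independent) for the martingale term, and the bound $|H*\mu(x)-H_{\epsilon}*\mu(x)|\le\mu([x-\epsilon,x+\epsilon])$ together with the moduli of continuity $\omega_a,\omega_b$ of $a,b$ on $[0,1]$ for the first term, one obtains a constant $C$ depending on $\phi,\psi,s,t$ but not on $n,\epsilon$ with
\[
  \Exp_{\pi'^n}\bigl|\Phi_{\epsilon}(\mu^n)\bigr|\le\frac{C}{\sqrt n}+C\,c_n+C\,\Exp\int_s^t\int_{\R}(\omega_a+\omega_b)\bigl(\mu^n_r([x-\epsilon,x+\epsilon])\bigr)\,\mu^n_r(\dd x)\,\dd r.
\]
Letting $n\to\infty$ --- using that $t\mapsto\mu^n_t$ converges in probability to $t\mapsto P(t)$ and a Glivenko--Cantelli-type uniform convergence, valid because $P(r)$ is atomless for a.e.\ $r$ --- and then $\epsilon\to0$ --- using $\mu([x-\epsilon,x+\epsilon])\downarrow\mu(\{x\})=0$ for $\mu$-a.e.\ $x$ --- the right-hand side tends to $0$. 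On the other hand, $\Phi_{\epsilon}$ being bounded continuous gives $\Exp_{\pi^{\infty}}|\Phi_{\epsilon}(P)|=\lim_n\Exp_{\pi'^n}|\Phi_{\epsilon}(\mu^n)|$, while $\Phi_{\epsilon}(P)\to\Phi(P)$ $\pi^{\infty}$-a.s.\ by dominated convergence, because $P_r=P(r)$ $\pi^{\infty}$-a.s.\ and $L_{\epsilon}(P(r))\phi(w_r)\to L(P(r))\phi(w_r)$ for a.e.\ $r$ and every $w_r$. Hence $\Exp_{\pi^{\infty}}|\Phi(P)|=\lim_{\epsilon\to0}\Exp_{\pi^{\infty}}|\Phi_{\epsilon}(P)|=0$, so $\Phi(P)=0$ $\pi^{\infty}$-a.s., and the three properties of Definition~\ref{defi:nlmp} hold $\pi^{\infty}$-a.s.

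\emph{Main obstacle.} The two nonroutine ingredients are coupled: the atomlessness of the limiting marginals --- a degenerate-parabolic regularity fact in which (D2) is used decisively --- is exactly what annihilates the self-interaction contribution $\mu^n_r(\{X^{i,n}_r\})\sim1/n$ that would otherwise obstruct the iterated limit $\lim_{\epsilon\to0}\lim_{n\to\infty}$ of the smoothing error. Carefully justifying the energy estimate for merely weak solutions $F$, and checking that it confines the atoms of $\partial_x F_t$ to $\{x:F_t(x)=0\}$, a set containing no jump point, is where I expect the real work to lie.
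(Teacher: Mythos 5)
Your overall plan --- check the three bullets of Definition~\ref{defi:nlmp} $\pi'^\infty$-a.s., deducing the first and third from Proposition~\ref{prop:sp}, and handle the martingale property via a smoothed functional --- is the right shape, but the core technical step is asserted rather than proven, and the paper settles it by a genuinely different mechanism.

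The gap is the $\dd t$-a.e.\ atomlessness of $P(t)$. You claim that ``standard energy estimates'' for merely weak solutions $F\in\F$ show that $\partial_x F_t$ has no atoms where $a(F_t(x))>0$. This is exactly the hard point, and you acknowledge it yourself. The difficulty is that $F_t$ is only known to be a $[0,1]$-valued nondecreasing $L^1_{\mathrm{loc}}$ function; $\partial_x F_t$ is a priori a measure, so testing the weak formulation~\eqref{eq:weak} against a mollified $\rho_R A(F_s)$ does not produce a usable quadratic form without first establishing that $A(F_t)$ has a weak spatial derivative --- which is essentially equivalent to what you want to prove. The paper avoids this circularity entirely by a probabilistic route: it observes that $t\mapsto P(t)$ is a measure-valued solution to a \emph{linear} Fokker--Planck equation whose coefficients $\bar a,\bar b$ are defined by finite differences of $A$ and $B$ across jumps of $F_t$ (so they remain bounded and measurable even if $F_t$ jumps), represents $P(t)$ via Figalli as the marginal law of a weak solution $\bar X$ of an SDE with coefficients $\bar b,\bar\sigma=\bar a^{1/2}$, and invokes the Bogachev--Krylov--R\"ockner criterion to get a density $\rho\in L^2_{\mathrm{loc}}$ for $\bar\sigma(t,x)\bar P_t(\dd x)\dd t$. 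Condition (D2) then enters precisely as you intuited --- $F_t(x)>0$ holds $\bar P_t$-a.e.\ so $\bar\sigma>0$ $\bar P_t$-a.e.\ --- and one concludes that $\bar P_t$ is absolutely continuous for a.e.\ $t$. That chain of references is doing the work your phrase ``standard energy estimates'' hides.

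For the martingale property your mollification-of-$H$ strategy (replace $H$ by a Lipschitz $H_\epsilon$, get a continuous bounded $\Phi_\epsilon$, bound the error uniformly, take $n\to\infty$ then $\epsilon\to0$) is workable and legitimately different from the paper's: the paper keeps $H$ unsmoothed, shows $\lim_n\Exp(\mathcal G(\mu^n)^2)=0$ directly by It\^o and orthogonality, and then proves that $\mathcal G$ is \emph{continuous} at every $P$ whose marginals are a.e.\ atomless, using Dini's theorem to upgrade the weak convergence $H*P^q_r\to H*P_r$ to uniform convergence on $\R$. Both approaches lean on the same atomlessness input, so this part of your proposal is a valid alternative once that input is established. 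But as written, the proof has a real hole at the atomlessness step, and the paper's proof is not the energy-estimate argument you sketch.
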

\begin{proof}
  By Lemma~\ref{lem:tightness}, the sequence $(\pi'^n)_{n \geq 1}$ is tight. Let $\pi'^{\infty}$ denote the limit of a converging subsequence, that we still index by $n$ for convenience. Let $Q$ refer to the canonical variable in the probability space $\P(C([0,+\infty),\R))$. Since the variables $X^i_0$ are i.i.d. with marginal distribution $m$, then $\pi'^{\infty}$-a.s., $Q_0 = m$. Let us now prove that $\pi'^{\infty}$-a.s., $\dd t$-almost everywhere, $Q_t$ does not weight points. By Proposition~\ref{prop:sp}, $\pi'^{\infty}$-a.s., for all $t \geq 0$ one has $H*Q_t = F_t$ where $F$ is the probabilistic solution to the Cauchy problem~\eqref{eq:myPDE}. Therefore it is enough to prove that, $\dd t$-almost everywhere, the function $F_t$ is continuous on $\R$. In this purpose, we first remark that the mapping $t \mapsto P(t)$ solves a linear Fokker-Planck equation. Indeed, since $A$ and $B$ are $C^1$ on $[0,1]$, the functions $(t,x) \mapsto A(F_t(x))$, $(t,x) \mapsto B(F_t(x))$ are of finite variation and the associated Stieltjes measures write $\dd (A(F_t(x))) = \bar{a}(t,x)P(t)(\dd x)$, $\dd (B(F_t(x))) = \bar{b}(t,x)P(t)(\dd x)$, where
  \begin{equation*}
    \bar{a}(t,x) := \left\{\begin{aligned}
      & a(F_t(x)) & & \text{if $F_t$ is continuous in $x$,}\\
      & \frac{A(F_t(x)) - A(F_t(x^-))}{F_t(x)-F_t(x^-)} & & \text{otherwise,}
    \end{aligned}\right.
  \end{equation*}  
  and $\bar{b}(t,x)$ is similarly defined. Remark that the functions $\bar{a}$ and $\bar{b}$ are bounded, and $||\bar{a}||_{\infty} \leq ||a||_{\infty}$, $||\bar{b}||_{\infty} \leq ||b||_{\infty}$. As a consequence, the continuous mapping $t \mapsto P(t)$ is a measure-valued solution on $[0,+\infty)$ to the Fokker-Planck equation with measurable and bounded coefficients
  \begin{equation*}
    \partial_t P(t) = \frac{1}{2} \partial_x^2 (\bar{a}(t,x)P(t)) - \partial_x (\bar{b}(t,x)P(t)).
  \end{equation*}

  Fix a finite $T>0$. Then by Figalli~\cite[Theorem~2.6]{figalli}, there exists a probability distribution $\bar{P} \in \P(C([0,T],\R))$ such that, for all $t \in [0,T]$, $\bar{P}_t = P(t)$ and $\bar{P}$ is the distribution of a weak solution $(\bar{X}_t)_{t \in [0,T]}$ on $[0,T]$ to the stochastic differential equation 
  \begin{equation*}
    \bar{X}_t = \bar{X}_0 + \int_0^t \bar{b}(s,\bar{X}_s) \dd s + \int_0^t \bar{\sigma}(s,\bar{X}_s) \dd \bar{W}_s,
  \end{equation*}
  where $\bar{\sigma}(t,x) := \bar{a}(t,x)^{1/2}$. The process $(\bar{X}_t)_{t \in [0,T]}$ satisfies the condition of Bogachev, Krylov and Röckner~\cite[Remark~2.2.3, p.~63]{bkr}. Hence the positive measure $\bar{\sigma}(t,x) \bar{P}_t(\dd x) \dd t$ admits a density $\rho(t,x) \in L^2_{\mathrm{loc}}([0,T]\times\R)$. Now for all $t \in [0,T]$, $F_t$ is the cumulative distribution function of $\bar{X}_t$ so that $F_t(x) > 0$, $\bar{P}_t$-almost everywhere. By (D2) and the definition of $\bar{\sigma}$, we deduce that $\bar{P}_t$-almost everywhere, $\bar{\sigma}(t,x) > 0$, therefore 
  \begin{equation*}
    \bar{P}_t(\dd x)\dd t = \ind{\bar{\sigma}(t,x) > 0}\frac{\rho(t,x)}{\bar{\sigma}(t,x)}\dd x\dd t
  \end{equation*}
  and consequently, $\bar{P}_t(\dd x)$ admits a density $\dd t$-almost everywhere in $[0,T]$. We conclude by taking $T$ arbitrarily large.
  
  We finally prove that $\pi'^{\infty}$-a.s., for all $\phi \in C^2_{\mathrm{b}}(\R)$, the process $M^{\phi}$ defined by $M_t^{\phi} := \phi(X_t) - \phi(X_0) - \int_0^t L(Q_s)\phi(X_s)\dd s$ is a $Q$-martingale. We will proceed as in the proof of~\cite[Lemma~1.6]{jourdain:porous}. Let $\phi \in C^2_{\mathrm{b}}(\R)$, $k \geq 1$, $0 \leq s_1 \leq \cdots \leq s_k \leq s \leq t$ and $g : \R^k \to \R$ continuous and bounded. For all $Q \in \P(C([0,+\infty),\R))$, we define
  \begin{equation*}
    \G(Q) := \left\langle Q, g(X_{s_1}, \ldots, X_{s_k})\left(\phi(X_t) - \phi(X_s) - \int_s^t L(Q_r)\phi(X_r)\dd r \right) \right\rangle.
  \end{equation*}
  By Itô's formula, for all $n \geq 1$,
  \begin{equation*}
    \begin{split}
      \G(\mu^n) = \frac{1}{n} \sum_{i=1}^n g(X_{s_1}, \ldots, X_{s_k})\bigg(&\int_s^t \phi'(X_r^{i,n}) \left(c_n + \sigma(H*\mu^n_t(X_r^{i,n}))\right)\dd W_r^i\\
      & + \int_s^t \phi''(X_r^{i,n})\left(c_n\sigma(H*\mu^n_t(X_r^{i,n})) + \frac{c_n^2}{2}\right)\dd r \bigg),
    \end{split}
  \end{equation*}
  so that, since $(c_n)_{n \geq 1}$, $\sigma$, $g$, $\phi'$ and $\phi''$ are bounded, $\lim_{n \to +\infty} \Exp(\G(\mu^n)^2) = 0$. We now check that the functional $\G$ is continuous at all $P \in \P(C([0,+\infty),\R))$ such that, $\dd r$-almost everywhere, $P_r$ does not weight points. Let $(P^q)_{q \geq 1}$ be a sequence of probability distributions on $C([0,+\infty),\R)$ weakly converging to $P \in \P(C([0,+\infty),\R))$ such that, $\dd r$-almost everywhere, $P_r$ does not weight points. Then, for all $q \geq 1$,
  \begin{equation}\label{eq:GPq}
    \begin{aligned}
      \G(P^q) &= \left\langle P^q, g(X_{s_1}, \ldots, X_{s_k})\int_s^t (L(P^q_r)-L(P_r))\phi(X_r)\dd r\right\rangle\\
      & \quad + \left\langle P^q, g(X_{s_1}, \ldots, X_{s_k})\left(\phi(X_t)-\phi(X_s)-\int_s^t L(P_r)\phi(X_r)\dd r\right)\right\rangle.
    \end{aligned}
  \end{equation}
  On the one hand, as $g$ and the derivatives of $\phi$ are bounded, there exists $C > 0$ independent of $q$ such that, for all $q \geq 1$,
  \begin{equation*}
    \begin{aligned}
      & \left| \left\langle P^q, g(X_{s_1}, \ldots, X_{s_k})\int_s^t (L(P^q_r)-L(P_r))\phi(X_r)\dd r\right\rangle \right|\\
      & \qquad \leq C \int_s^t \left(\sup_{x \in \R} |b(H*P^q_r(x)) - b(H*P_r(x))| + \sup_{x \in \R} |a(H*P^q_r(x)) - a(H*P_r(x))|\right) \dd r.
    \end{aligned}
  \end{equation*}
  By Dini's theorem, $\dd r$-almost everywhere in $[s,t]$, $H*P^q_r$ converges uniformly to $H*P_r$ on $\R$ when $q \to +\infty$. As the functions $b$ and $a$ are bounded and uniformly continuous on $[0,1]$, by Lebesgue's theorem, the right-hand side above goes to $0$ when $q \to +\infty$.
  
  On the other hand, $\dd r$-almost everywhere in $[s,t]$, the function $x \mapsto L(P_r)\phi(x)$ is continuous on $\R$ and uniformly bounded in $r$, therefore by Lebesgue's theorem again, the function $(x_r)_{r \geq 0} \mapsto g(x_{s_1}, \ldots, x_{s_k})(\phi(x_t)-\phi(x_s)-\int_s^t L(P_r)\phi(x_r)\dd r)$ is continuous on $C([0,+\infty),\R)$. As a consequence, the second term in the right-hand side of~\eqref{eq:GPq} converges to $\G(P)$ and we conclude that $\lim_{q \to +\infty} \G(P^q) = \G(P)$.
  
  Since we have proved that $\pi'^{\infty}$-a.s., $\dd r$-almost everywhere, $Q_r$ does not weight points, then
  \begin{equation*}
    \lim_{n \to +\infty} \Exp^{\pi'^n}(\G(Q)^2) = \Exp^{\pi'^{\infty}}(\G(Q)^2),
  \end{equation*}
  which rewrites
  \begin{equation*}
     \Exp^{\pi'^{\infty}}(\G(Q)^2) = \lim_{n \to +\infty} \Exp(\G(\mu^n)^2) = 0.
  \end{equation*}
  As a consequence, taking $\phi$, $(s_1, \ldots, s_k, s, t)$, $g$ in countable subsets leads to the conclusion that $\pi'^{\infty}$-a.s., $Q$ solves the nonlinear martingale problem.
\end{proof}

We now address the uniqueness of solutions to the nonlinear martingale problem. The following criterion is due to Stroock and Varadhan~\cite{stroock}.

\begin{lem}\label{lem:sv}
  Under the assumptions of Proposition~\ref{prop:sp}, if the function $(t,x) \mapsto a(F_t(x))$ is uniformly positive on the compact sets of $[0,+\infty)\times\R$, then there is at most one solution to the nonlinear martingale problem.
\end{lem}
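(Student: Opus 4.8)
The plan is to reduce the nonlinear martingale problem to a linear, time-inhomogeneous one whose coefficients are frozen along the (already known to be unique) probabilistic solution, and then to invoke the one-dimensional well-posedness criterion. Let $P \in \P(C([0,+\infty),\R))$ be any solution to the nonlinear martingale problem in the sense of Definition~\ref{defi:nlmp}. As recalled just after that definition, the function $(t,x) \mapsto H*P_t(x)$ is a weak solution to the Cauchy problem~\eqref{eq:myPDE}, and since $a$ and $b$ are bounded and $m$ has a finite first order moment, this solution belongs to $\F$; by Proposition~\ref{prop:unicite-edp} it must therefore coincide with the probabilistic solution $F$ given by Proposition~\ref{prop:sp}. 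In particular, for every $t \geq 0$ one has $H*P_t = F_t$, so that for all $\phi \in C^2_{\mathrm{b}}(\R)$,
\begin{equation*}
  L(P_t)\phi(x) = b(F_t(x))\phi'(x) + (1/2)a(F_t(x))\phi''(x) =: \tilde{L}_t\phi(x).
\end{equation*}

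Consequently, $P$ is a solution of the linear time-inhomogeneous martingale problem associated with the family of operators $(\tilde{L}_t)_{t \geq 0}$ and the initial law $m$: $P_0 = m$ and, for every $\phi \in C^2_{\mathrm{b}}(\R)$, the process $\phi(X_t) - \phi(X_0) - \int_0^t \tilde{L}_s\phi(X_s)\dd s$ is a $P$-martingale. The coefficients $(t,x) \mapsto b(F_t(x))$ and $(t,x) \mapsto a(F_t(x))$ are bounded, by $||b||_{\infty}$ and $||a||_{\infty}$ respectively; they are Borel measurable on $[0,+\infty)\times\R$, which follows from the continuity of $t \mapsto F_t \in \Lloc$ together with the monotonicity of each $F_t$; and, by the standing hypothesis of the lemma, $(t,x) \mapsto a(F_t(x))$ is bounded below by a positive constant on each compact subset of $[0,+\infty)\times\R$. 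Thus all the structural assumptions needed for a one-dimensional uniqueness statement are met, and the entire analytic content now reduces to the well-posedness of this linear problem.

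At this point I would invoke the one-dimensional uniqueness criterion of Stroock and Varadhan~\cite{stroock}: in dimension one, the martingale problem for a second order operator with bounded measurable drift and bounded measurable diffusion coefficient that is locally bounded away from zero has at most one solution with a prescribed initial distribution --- no continuity of the coefficients is required, in contrast with the general multidimensional theory. Applied to $(\tilde{L}_t)$ with initial law $m$, this gives that $P$ is uniquely determined, and therefore there is at most one solution to the nonlinear martingale problem.

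The only genuine subtlety I anticipate is the verification that the frozen coefficients fall exactly within the scope of the cited one-dimensional result --- namely the joint measurability of $(t,x)\mapsto a(F_t(x))$ and $(t,x)\mapsto b(F_t(x))$, and the observation that the hypothesis "$(t,x)\mapsto a(F_t(x))$ uniformly positive on compacts" is precisely the local ellipticity condition required there; once these points are settled the conclusion is immediate from Proposition~\ref{prop:unicite-edp} and \cite{stroock}. Everything else (martingale property, identification of marginals) is routine given the results established earlier in the section.
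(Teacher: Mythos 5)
Your proof is correct and follows essentially the same route as the paper's: identify $H*P_t$ with the unique probabilistic solution $F_t$ via the remark after Definition~\ref{defi:nlmp} and Proposition~\ref{prop:unicite-edp}, reduce to the linear time-inhomogeneous martingale problem with frozen coefficients $b(F_t(x))$ and $a(F_t(x))$, and conclude by the one-dimensional Stroock--Varadhan uniqueness criterion (the paper cites \cite[Exercise~7.3.3, p.~192]{stroock}). The paper states it slightly more tersely by taking two solutions $P$ and $Q$ simultaneously and noting they solve the same linear problem, but the content and the key citation are the same.
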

\begin{proof}
  Let $P$ and $Q$ denote two solutions to the nonlinear martingale problem. Then they both solve the following linear martingale problem in $R \in \P(C([0,+\infty),\R))$:
  \begin{itemize}
    \item $R_0 = m$;
    \item for all $\phi \in C^2_{\mathrm{b}}(\R)$, the process $\tilde{M}^{\phi}$ defined by
    \begin{equation*}
      \tilde{M}_t^{\phi} := \phi(X_t) - \phi(X_0) - \int_0^t \left\{\phi'(X_s)b(F_s(X_s)) + \frac{1}{2}\phi''(X_s)a(F_s(X_s))\right\}\dd s
    \end{equation*}
    is a $R$-martingale.
  \end{itemize}
  The functions $a(F_s(x))$ and $b(F_s(x))$ are measurable and bounded, and $a(F_s(x))$ is uniformly positive on the compact sets of $[0,+\infty)\times\R$. By~\cite[Exercise~7.3.3, p.~192]{stroock}, $P=Q$.
\end{proof}

\begin{lem}\label{lem:unicitenlmp}
  Assume that $m$ has a finite first order moment, and either the uniform ellipticity condition (D3) holds, or the nondegeneracy condition (D2) holds and $F_0(x)>0$ for all $x \in \R$. Then the function $(t,x) \mapsto a(F_t(x))$ is uniformly positive on the compact sets of $[0,+\infty)\times\R$.
\end{lem}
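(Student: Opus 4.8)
The plan is to handle the two alternatives of the hypothesis separately. If the uniform ellipticity condition (D3) holds there is nothing to do: any weak solution takes its values in $[0,1]$, so $a(F_t(x))\geq\ua>0$ for every $(t,x)\in[0,+\infty)\times\R$. So assume instead that (D2) holds and $F_0(x)>0$ for all $x\in\R$, and fix a compact subset of $[0,+\infty)\times\R$, which we enclose in $[0,T]\times[-R,R]$ for some finite $T,R>0$. Since $a$ is continuous on $[0,1]$ and positive on $(0,1]$, it suffices to exhibit $\delta>0$ with $F_t(x)\geq\delta$ for all $t\in[0,T]$ and all $x\geq-R$: then on the compact set $F_t(x)\in[\delta,1]$, whence $a(F_t(x))\geq\min_{[\delta,1]}a>0$. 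As each $F_t$ is nondecreasing and right-continuous, this reduces to bounding $\inf_{t\in[0,T]}F_t(-R)$ away from $0$, recalling that $F_t(-R)=P(t)((-\infty,-R])$.

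To obtain such a bound I would return to the particle system and invoke Proposition~\ref{prop:sp}. Write $c:=\sup_{n\geq1}c_n<+\infty$ and $\tilde M^{i,n}_t:=\int_0^t(c_n+\sigma(H*\mu^n_s(X^{i,n}_s)))\dd W^i_s$, a square-integrable martingale for the filtration generated by the initial positions $\mathcal{X}_0:=\sigma(X^1_0,\dots,X^n_0)$ and the Brownian motions, with $\langle\tilde M^{i,n}\rangle_T\leq(c+||a||_{\infty}^{1/2})^2T$ deterministically. From $X^{i,n}_t=X^i_0+\int_0^tb(H*\mu^n_s(X^{i,n}_s))\dd s+\tilde M^{i,n}_t$ and $|b|\leq||b||_{\infty}$ on $[0,1]$, for every $t\in[0,T]$ and $a>0$,
\begin{equation*}
  \ind{X^{i,n}_t\leq-R}\geq\ind{X^i_0\leq-R-T||b||_{\infty}-a}\,\ind{\sup_{s\leq T}|\tilde M^{i,n}_s|\leq a}.
\end{equation*}
Since $\mathcal{X}_0$ lies in the initial $\sigma$-field of the above filtration, Doob's $L^2$ inequality and Markov's inequality applied conditionally on $\mathcal{X}_0$ yield $\Pr(\sup_{s\leq T}|\tilde M^{i,n}_s|>a\mid\mathcal{X}_0)\leq4(c+||a||_{\infty}^{1/2})^2T/a^2$ almost surely. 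Choosing $a=a_0:=2\sqrt{2T}\,(c+||a||_{\infty}^{1/2})$ makes this at most $1/2$, so taking conditional expectation given $\mathcal{X}_0$ in the inequality above (the first indicator being $\mathcal{X}_0$-measurable) and then expectation gives
\begin{equation*}
  \Exp\big(\ind{X^{i,n}_t\leq-R}\big)\geq\frac{1}{2}\,\Pr\big(X^i_0\leq-R-T||b||_{\infty}-a_0\big)=\frac{\eta}{2},\qquad\eta:=F_0(-R-T||b||_{\infty}-a_0)>0,
\end{equation*}
uniformly over $1\leq i\leq n$, $n\geq1$ and $t\in[0,T]$; here $\eta>0$ precisely because $F_0=H*m$ is positive everywhere, which is where the hypothesis is used. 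Averaging over $i$ gives $\Exp(\mu^n_t((-\infty,-R]))\geq\eta/2$, and since this random variable lies in $[0,1]$ a reverse Markov inequality yields $\Pr(\mu^n_t((-\infty,-R])\geq\eta/4)\geq\eta/4$ for all $n$ and all $t\in[0,T]$.

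It remains to pass to the limit $n\to+\infty$. By Proposition~\ref{prop:sp}, $\mu^n_t\to P(t)$ in probability in $\P(\R)$ for each fixed $t$, so along a subsequence the convergence is almost sure, and since $\nu\mapsto\nu((-\infty,-R])$ is upper semicontinuous, $\limsup_n\mu^n_t((-\infty,-R])\leq P(t)((-\infty,-R])=F_t(-R)$ almost surely along that subsequence. As the events $\{\mu^n_t((-\infty,-R])\geq\eta/4\}$ have probability at least $\eta/4$, Fatou's lemma gives $\Pr(F_t(-R)\geq\eta/4)\geq\limsup_n\Pr(\mu^n_t((-\infty,-R])\geq\eta/4)\geq\eta/4>0$, and since $F_t(-R)$ is deterministic this forces $F_t(-R)\geq\eta/4$ for every $t\in[0,T]$. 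One then takes $\delta=\eta/4$, and the argument is complete.

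The step I would be most careful about is the conditioning: the drift and diffusion of $X^{i,n}$ depend on all the particles, so $\tilde M^{i,n}$ is not adapted to $W^i$ alone and one cannot simply factorise the probabilities of the two events in the displayed inclusion. Conditioning on $\mathcal{X}_0$ — which contains $X^i_0$ while leaving $\tilde M^{i,n}$ a martingale with a deterministic bound on its bracket — is exactly what restores the independence needed. Everything else (boundedness of $a$, $b$ and $(c_n)$, the conditional forms of Doob's and Markov's inequalities, and the portmanteau lemma) is routine.
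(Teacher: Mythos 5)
Your proof is correct, but it takes a genuinely different route from the paper's. The paper disposes of the (D2) case in one line by citing the tail estimate of Lemma~\ref{lem:FtF0} in Appendix~\ref{app:quant}, namely $F_t(x)\geq\tfrac12 F_0(2x-C(t))$ for $x\leq -C(t)$ with $C(\cdot)$ nondecreasing; combined with the monotonicity of $F_t$ and the positivity of $F_0$, this already gives a uniform lower bound on $F_t$ over any $[0,T]\times[-R,R]$. The proof of Lemma~\ref{lem:FtF0}, in turn, represents $P(t)$ as the time marginal of the auxiliary diffusion $\bar X$ constructed in Lemma~\ref{lem:existnlmp} via Figalli's theorem and runs a Chebyshev bound on the martingale part of $\bar X_t$ conditionally on $\{\bar X_0\leq 2x-C_2(t)\}$. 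You instead go back to the finite particle system: the same event decomposition (initial position far left, martingale increment small), a conditional Doob--Markov bound given $\mathcal X_0$ (and your remark about why one must condition on $\mathcal X_0$ rather than factorize is exactly the right observation, since $\tilde M^{i,n}$ depends on all the particles), then a passage to the limit $n\to+\infty$ via Proposition~\ref{prop:sp}, upper semicontinuity of $\nu\mapsto\nu((-\infty,-R])$, and reverse Fatou. The core second-moment estimate is the same, but applied to a different object: the paper's route uses the limiting nonlinear process (hence the Figalli machinery already in place for Lemma~\ref{lem:existnlmp}), while yours stays at the level of the particle system and pays for it with the extra reverse-Markov and Fatou steps. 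Incidentally, that final limiting step can be streamlined: testing $\Exp(\mu^n_t(\cdot))\geq\eta/2$ against a continuous $g$ with $\ind{(-\infty,-R]}\leq g\leq\ind{(-\infty,-R+\epsilon)}$ gives $F_t(-R+\epsilon)\geq P(t)(g)=\lim_n\Exp(\mu^n_t(g))\geq\eta/2$, and letting $\epsilon\downarrow 0$ with right-continuity of $F_t$ yields $F_t(-R)\geq\eta/2$ directly, without the reverse Markov inequality.
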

\begin{proof}
  If (D3) holds, the result is obvious. Now if $F_0(x)>0$, for all compact subset $K \in [0,+\infty) \times \R$, by the first part of Lemma~\ref{lem:FtF0} in Appendix~\ref{app:quant}, there exists $u_0>0$ such that for all $(t,x) \in K$, $F_t(x) \geq u_0$. If (D2) holds in addition, for all $(t,x) \in K$, $a(F_t(x)) \geq \inf_{u \geq u_0} a(u) > 0$. 
\end{proof}

We conclude this subsection by stating a propagation of chaos result for the empirical distribution $\mu^n$ of the particle system in $\P(C([0,+\infty),\R))$.

\begin{cor}\label{cor:nlmp}
  Under the assumptions of Lemma~\ref{lem:unicitenlmp}, there exists a unique solution $P$ to the nonlinear martingale problem, and it is the limit in probability, in $\P(C([0,+\infty),\R))$, of the sequence of empirical distributions $\mu^n$.
\end{cor}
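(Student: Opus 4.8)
The plan is to assemble the existence result of Lemma~\ref{lem:existnlmp}, the uniqueness result obtained by combining Lemmas~\ref{lem:unicitenlmp} and~\ref{lem:sv}, and the tightness of Lemma~\ref{lem:tightness}, and then to upgrade the resulting weak convergence to convergence in probability using that the limiting distribution is deterministic.

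First I would verify that the hypotheses of Lemma~\ref{lem:existnlmp} hold under the assumptions of Lemma~\ref{lem:unicitenlmp}: the latter requires that $m$ have a finite first order moment and that either (D3) holds, or (D2) holds together with $F_0(x)>0$ for all $x \in \R$; since (D3) is stronger than (D2), the nondegeneracy condition (D2) is in force in either case. Hence, by Lemma~\ref{lem:existnlmp}, the limit of any converging subsequence of $(\pi'^n)_{n \geq 1}$ concentrates on the set of solutions to the nonlinear martingale problem; since such a limit is a probability measure, this set is nonempty, which gives existence. On the other hand, Lemma~\ref{lem:unicitenlmp} ensures that $(t,x) \mapsto a(F_t(x))$ is uniformly positive on the compact sets of $[0,+\infty)\times\R$, so by Lemma~\ref{lem:sv} there is at most one solution to the nonlinear martingale problem. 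Denoting by $P$ this unique solution, we conclude that the limit of any converging subsequence of $(\pi'^n)_{n \geq 1}$ equals the Dirac mass $\delta_P$.

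It then remains to promote this to a statement about the full sequence. By Lemma~\ref{lem:tightness}, $(\pi'^n)_{n \geq 1}$ is tight; since every converging subsequence has the same limit $\delta_P$, the whole sequence converges weakly to $\delta_P$, that is, the sequence of random variables $\mu^n$ converges in distribution to the constant $P$. As convergence in distribution towards a deterministic limit is equivalent to convergence in probability, we obtain that $\mu^n$ converges in probability, in $\P(C([0,+\infty),\R))$, to $P$, which is the desired statement.

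The proof is therefore essentially a bookkeeping of the previously established lemmas; the only mildly delicate point is the final step, which relies on the standard characterization of convergence via subsequences in a metric space together with the equivalence between convergence in law to a constant and convergence in probability. I do not anticipate any genuine obstacle, the substance of the result being concentrated in Lemmas~\ref{lem:existnlmp},~\ref{lem:sv} and~\ref{lem:unicitenlmp}.
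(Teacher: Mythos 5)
Your proof is correct and takes exactly the approach the paper intends (the paper states the corollary without an explicit proof, treating it as an immediate consequence of Lemmas~\ref{lem:tightness}, \ref{lem:existnlmp}, \ref{lem:sv} and~\ref{lem:unicitenlmp}). All the hypotheses are checked correctly, the existence argument via non-emptiness of the support of any subsequential limit is sound, and the final upgrade from weak convergence to a Dirac mass to convergence in probability is the standard fact you invoke.
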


\subsection{The reordered particle system}\label{ss:spr} For all $t \geq 0$, let $Y_t^n := (Y_t^{1,n}, \ldots, Y_t^{n,n})$ denote the increasing reordering of the vector $(X_t^{1,n}, \ldots, X_t^{n,n})$. Then the sample-paths of the process $Y^n$ are in $C([0,+\infty),D_n)$, where $D_n$ refers to the polyhedron $\{(y_1, \ldots, y_n) \in \R^n : y_1 \leq \cdots \leq y_n\}$. 

It is well known that $Y^n$ is a normally reflected Brownian motion on $\partial D_n$, with constant drift vector and constant diagonal diffusion matrix. More precisely, according to~\cite{jourdain:porous}, the process $(\beta^1, \ldots, \beta^n)$ defined by $\beta^i_t = \sum_{j=1}^n \int_0^t \ind{X_s^{j,n} = X_s^{i,n}}\dd W_s^j$ is a Brownian motion. By the Itô-Tanaka formula,
\begin{equation}\label{eq:spr}
  Y_t^{i,n} = Y_0^{i,n} + b(i/n) t + (c_n+\sigma(i/n))\beta^i_t + V_t^i,
\end{equation}
where $V$ is a $\R^n$-valued continuous process with finite variation $|V|$ which writes $V_t^i = \int_0^t (\gamma^i_s - \gamma^{i+1}_s)\dd|V|_s$ with $\dd|V|_t$-a.e., $\gamma^1_t = \gamma^{n+1}_t = 0$, $\gamma^i_t \geq 0$ and $\gamma^i_t(Y_t^{i,n}-Y_t^{i-1,n}) = 0$. We shall now refer to the process $Y^n$ as the {\em reordered particle system} and denote by $\tmu^n \in \P(C([0,+\infty),\R))$ its empirical distribution. 

\begin{lem}[Tanaka~\cite{tanaka}]\label{lem:tanaka}
  For a given random variable $Y_0^n \in D_n$ and an independent $\R^n$-valued Brownian motion $(\beta^1, \ldots, \beta^n)$, there exists a unique process $(Y^n,V) \in C([0,+\infty),D_n\times\R^n)$ satisfying all the above conditions.
\end{lem}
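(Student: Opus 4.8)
The plan is to recognize the conditions listed before the lemma as a pathwise Skorokhod reflection problem in the convex polyhedral domain $D_n=\{y_1\leq\cdots\leq y_n\}$ and to invoke Tanaka's theorem~\cite{tanaka} on reflected stochastic differential equations in convex regions. First I would set $x_t:=(x_t^1,\ldots,x_t^n)$ with $x_t^i:=Y_0^{i,n}+b(i/n)t+(c_n+\sigma(i/n))\beta_t^i$, so that the first relation in the description of $Y^n$ reads $Y_t^n=x_t+V_t$. Since $(\beta^1,\ldots,\beta^n)$ has continuous sample paths and $Y_0^n\in D_n$, the path $t\mapsto x_t$ is almost surely continuous with $x_0\in D_n$; in particular all the data of the problem can be frozen and the question reduces to a deterministic one, path by path.

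Next I would identify the remaining conditions on $V$ with the geometry of $\partial D_n$. The faces of $D_n$ are $F_k:=\{y\in D_n:y_k=y_{k+1}\}$ for $1\leq k\leq n-1$, and the cone of inward normals to $D_n$ at a boundary point $y$ is the nonnegative cone generated by $\{e_{k+1}-e_k:y\in F_k\}$, where $(e_1,\ldots,e_n)$ is the canonical basis of $\R^n$. A direct computation shows that the vector with $i$-th component $\gamma^i-\gamma^{i+1}$, with the convention $\gamma^1=\gamma^{n+1}=0$, equals $\sum_{k=1}^{n-1}\gamma^{k+1}(e_{k+1}-e_k)$; combined with the constraints $\gamma_t^k\geq 0$ and $\gamma_t^{k+1}(Y_t^{k+1,n}-Y_t^{k,n})=0$, this says exactly that $\dd V_t$ lies in the cone of inward normals of $D_n$ at $Y_t^n$ and that $|V|$ increases only on $\{t:Y_t^n\in\partial D_n\}$. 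Hence $(Y^n,V)$ satisfies all the conditions of the lemma if and only if it solves the Skorokhod reflection problem for the path $x$ in the convex domain $D_n$.

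Then I would apply Tanaka's result: for every continuous path $x:[0,+\infty)\to\R^n$ with $x_0$ in the closed convex set $D_n$ (which has nonempty interior), there is a unique $(y,v)\in C([0,+\infty),D_n\times\R^n)$ such that $y=x+v$, $v_0=0$, $v$ has finite variation, $|v|$ is carried by $\{t:y_t\in\partial D_n\}$ and $\dd v_t$ belongs to the cone of inward normals of $D_n$ at $y_t$; moreover the Skorokhod map $x\mapsto y$ is continuous (indeed Lipschitz on compact time intervals) for the topology of uniform convergence on compacts. Applying this pathwise to the $x$ constructed above yields almost sure existence and uniqueness of $(Y^n,V)$, and the continuity of the Skorokhod map shows that $Y^n$ and $V=Y^n-x$ are adapted functionals of $(Y_0^n,\beta)$.

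The only genuine point to verify is that $D_n$ falls within the scope of~\cite{tanaka}: convexity is immediate. If one uses the version of the theorem stated for bounded convex domains, the unboundedness of $D_n$ is handled by localization — one stops $x$ at the exit time $\tau_R$ of the balls $B(0,R)$, solves the reflection problem in the bounded convex set $D_n\cap B(0,R+1)$, and checks that, the reflection mechanism being local, the solutions obtained for different $R$ agree on $[0,\tau_R]$ and patch consistently as $R\to+\infty$; uniqueness transfers the same way. I do not expect any substantial obstacle beyond this localization bookkeeping and the normal-cone identification of the previous paragraph, both of which are routine.
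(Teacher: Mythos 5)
Your proposal is correct and amounts to a careful unpacking of what the paper's bare citation to Tanaka requires: identifying the conditions on $(Y^n,V)$ as the deterministic Skorokhod reflection problem for the convex polyhedron $D_n$ (via the normal-cone computation $\sum_{k=1}^{n-1}\gamma^{k+1}(e_{k+1}-e_k)$ and the complementarity constraints $\gamma^{k+1}_t\geq 0$, $\gamma^{k+1}_t(Y^{k+1,n}_t-Y^{k,n}_t)=0$), and then invoking Tanaka's pathwise existence, uniqueness and Lipschitz continuity of the Skorokhod map in convex domains. The only remark worth making is that Tanaka's theorem is stated for arbitrary convex domains with nonempty interior, so the localization step you hedge about in the last paragraph is superfluous.
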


For $Q \in \P(C([0,+\infty),\R))$ and $t_1, \ldots, t_k \geq 0$, let us denote by $Q_{t_1, \ldots, t_k} \in \P(\R^k)$ the finite-dimensional marginal distribution of $Q$. Let us define $\A$ as the set of probability distributions $Q \in \P(C([0,+\infty),\R))$ such that, for all $0 \leq t_1 < \cdots < t_k$, $Q_{t_1, \ldots, t_k}$ is the distribution of $(H*Q_{t_1})^{-1}(U), \ldots, (H*Q_{t_k})^{-1}(U))$ where $U$ is a uniform random variable on $[0,1]$. Remark that any $Q \in \A$ is exactly determined by the flow of its one-dimensional marginals $t \mapsto Q_t$.  

\begin{prop}\label{prop:spr}
  Under the assumptions of Proposition~\ref{prop:sp}, the empirical distribution $\tmu^n$ of the reordered particle system converges in probability, in $\P(C([0,+\infty),\R))$, to the unique $\tP \in \A$ such that for all $t \geq 0$, $\tP_t=P(t)$, where the mapping $t \mapsto P(t)$ is given by Proposition~\ref{prop:sp}. In particular, for all $t \geq 0$, $H*\tP_t = F_t$ where $F$ is the probabilistic solution to the Cauchy problem~\eqref{eq:myPDE}.
\end{prop}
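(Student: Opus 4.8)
The plan is to combine a tightness argument with an identification of the limit, the key structural fact being that the reordered particles never change their rank, so that $\tmu^n$ automatically lies in the set $\A$. Throughout, write $\tpi^n$ for the law of $\tmu^n$ in $\P(\P(C([0,+\infty),\R)))$ and $\Phi$ for the canonical application $\P(C([0,+\infty),\R))\to C([0,+\infty),\P(\R))$, which is continuous.

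First I would record two elementary facts. (i) $\tmu^n\in\A$ almost surely: for $0\le t_1<\cdots<t_k$ the finite-dimensional marginal of $\tmu^n$ is $(1/n)\sum_{i=1}^n\delta_{(Y^{i,n}_{t_1},\ldots,Y^{i,n}_{t_k})}$, and since $Y^{1,n}_s\le\cdots\le Y^{n,n}_s$ for every $s$, the pseudo-inverse $(H*\tmu^n_{t_j})^{-1}$ equals $Y^{i,n}_{t_j}$ on $((i-1)/n,i/n)$; hence, for $U$ uniform on $[0,1]$, the vector $((H*\tmu^n_{t_1})^{-1}(U),\ldots,(H*\tmu^n_{t_k})^{-1}(U))$ equals $(Y^{i,n}_{t_1},\ldots,Y^{i,n}_{t_k})$ with probability $1/n$, which is precisely $\tmu^n_{t_1,\ldots,t_k}$ (here the preservation of ranks is crucial, as it forces the same index $i$ at all times). (ii) $\A$ is closed for the weak topology: if $Q^q\to Q$ in $\P(C([0,+\infty),\R))$ with $Q^q\in\A$, then $Q^q_{t_1,\ldots,t_k}\to Q_{t_1,\ldots,t_k}$ and $Q^q_{t_j}\to Q_{t_j}$ weakly for each $j$, so $(H*Q^q_{t_j})^{-1}(u)\to(H*Q_{t_j})^{-1}(u)$ for all but countably many $u$; testing against a uniform variable shows $Q_{t_1,\ldots,t_k}$ is the law of $((H*Q_{t_1})^{-1}(U),\ldots,(H*Q_{t_k})^{-1}(U))$, i.e. $Q\in\A$.

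The core of the proof is the tightness of $(\tpi^n)_{n\ge1}$. By a standard argument based on Markov's inequality, it suffices to prove the tightness in $\P(C([0,+\infty),\R))$ of the intensity measures $\Exp[\tmu^n]=(1/n)\sum_{i=1}^n\mathrm{Law}(Y^{i,n})$; since their common time-zero marginal is $m$, what remains is a control of the modulus of continuity of the reordered particles that is uniform in $n$. This is where I expect the main difficulty to be. One would use the It\^o--Tanaka decomposition~\eqref{eq:spr}: the drift and Brownian contributions are handled by the boundedness of $b$, $\sigma$ and $(c_n)_{n\ge1}$ together with the Kolmogorov criterion, exactly as in Lemma~\ref{lem:tightness}, while the reflection term $V^i=\int_0^\cdot(\gamma^i_s-\gamma^{i+1}_s)\dd|V|_s$ must be estimated through the collision local times, adapting the computation carried out for the porous medium equation in~\cite{jourdain:porous}.

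Granting tightness, I would conclude as follows. Let $\tpi^\infty$ be the limit of a converging subsequence of $(\tpi^n)$. By (i)--(ii) above it concentrates on $\A$; moreover the image of $\tpi^n$ under the continuous map $\Phi$ is the law of the flow $t\mapsto\tmu^n_t=\mu^n_t$, which by Proposition~\ref{prop:sp} converges to $\delta_{P(\cdot)}$, so $\tpi^\infty$ concentrates on $\{Q\in\A:Q_t=P(t)\ \text{for all }t\ge0\}$. Since any element of $\A$ is entirely determined by the flow of its one-dimensional marginals, this set has at most one element; its non-emptiness (forced by $\tpi^\infty$ being a probability measure supported on it) then yields the existence of a unique $\tP\in\A$ with $\tP_t=P(t)$, and gives $\tpi^\infty=\delta_{\tP}$. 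As every converging subsequence has the same limit, $\tpi^n\to\delta_{\tP}$, which is exactly the announced convergence in probability of $\tmu^n$ to $\tP$ in $\P(C([0,+\infty),\R))$. Finally $H*\tP_t=H*P(t)=F_t$, with $F$ the probabilistic solution given by Proposition~\ref{prop:sp}, which establishes the last assertion.
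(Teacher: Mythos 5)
Your structural framework matches the paper's: reduce tightness of $(\tpi^n)_{n\ge1}$ to tightness of the ``typical reordered particle'' (your intensity-measure formulation is equivalent to the paper's use of a uniformly random index $\theta_n$ together with Sznitman's Proposition~2.2), observe that $\tmu^n\in\A$, that $\A$ is closed, and that the flow of time marginals is already pinned down by Proposition~\ref{prop:sp}. Your verification that $\tmu^n\in\A$ and your direct proof that $\A$ is closed are both correct (the paper instead cites a lemma from \cite{jourdain:characteristics} for the latter). However, you leave a genuine gap at the one place you yourself flag as the main difficulty: the modulus-of-continuity estimate for $Y^{\theta_n,n}$.

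You propose to run the Kolmogorov criterion through the It\^o--Tanaka decomposition~\eqref{eq:spr} and to ``estimate the reflection term $V^i$ through the collision local times.'' That route is never carried out, and it is considerably harder than necessary: bounding $V^i$ directly would require uniform-in-$n$ control on the local times at collisions, which is exactly the kind of delicate analysis the paper avoids. The observation you are missing is that the Wasserstein rearrangement inequality~\eqref{eq:Wmunmun} already does all the work. Indeed,
\begin{equation*}
\Exp\bigl(|Y^{\theta_n,n}_s-Y^{\theta_n,n}_t|^p\bigr)
= \frac{1}{n}\sum_{i=1}^n \Exp\bigl(|Y^{i,n}_s-Y^{i,n}_t|^p\bigr)
= \Exp\bigl(W_p^p(\tmu^n_s,\tmu^n_t)\bigr)
\le \frac{1}{n}\sum_{i=1}^n \Exp\bigl(|X^{i,n}_s-X^{i,n}_t|^p\bigr)
= \Exp\bigl(|X^{1,n}_s-X^{1,n}_t|^p\bigr),
\end{equation*}
where the inequality is~\eqref{eq:Wmunmun} and the last equality is exchangeability. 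This transfers the increment bound from the original particle system, where it is elementary (bounded $b$, $\sigma$, $(c_n)$ plus Burkholder--Davis--Gundy, exactly as in Lemma~\ref{lem:tightness}), to the reordered one, without ever opening up the reflection term. With this substitution the tightness step closes and the rest of your argument goes through.
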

\begin{proof}
  Let $\tpi^n$ refer to the distribution of $\tmu^n$ in $\P(C([0,+\infty),\R))$. According to Sznitman~\cite{sznitman}, the tightness of $(\tpi^n)_{n \geq 1}$ is equivalent to the tightness of the sequence of the distributions of the variables $Y^{\theta_n,n} \in C([0,+\infty),\R)$ where $\theta_n$ is a uniform random variable in the set $\{1, \ldots, n\}$, independent of $Y^n$. For all $n \geq 1$, $Y^{\theta_n,n}_0$ has distribution $m$. Besides, for $s \leq t$ and $p \geq 1$, 
  \begin{equation*}
    \Exp\left(|Y^{\theta_n,n}_s-Y^{\theta_n,n}_t|^p\right) = \frac{1}{n}\sum_{i=1}^n \Exp\left(|Y^{i,n}_s-Y^{i,n}_t|^p\right) = \Exp\left(W_p^p(\tmu^n_s,\tmu^n_t)\right).
  \end{equation*}
  Now, by~\eqref{eq:Wmunmun} and exchangeability of $(X^{1,n}_t)_{t \geq 0}, \ldots, (X^{n,n}_t)_{t \geq 0}$,
  \begin{equation*}
    \Exp\left(W_p^p(\tmu^n_s,\tmu^n_t)\right) \leq \frac{1}{n} \sum_{i=1}^n \Exp\left(|X_s^{i,n} - X_t^{i,n}|^p\right) = \Exp\left(|X_s^{1,n} - X_t^{1,n}|^p\right),
  \end{equation*}
  so that, by the proof of Lemma~\ref{lem:tightness} and the Kolmogorov criterion, the sequence $(\tpi^n)_{n \geq 1}$ is tight. 
  
  It is clear from the definition of the reordered particle system that for all $t \geq 0$, $\mu^n_t=\tmu^n_t$, therefore it is already known from Proposition~\ref{prop:sp} that $\tmu^n_t$ converges in distribution to $P(t)$. Consequently, the proof of Proposition~\ref{prop:spr} requires nothing but a uniqueness result for the support of any limit point of $(\tpi^n)_{n \geq 1}$.

  The latter is a consequence of the following remark. Certainly, $\tmu^n \in \A$, and the set $\A$ is closed in $\P(C([0,+\infty),\R))$ by~\cite[Lemma~3.5]{jourdain:characteristics}. Hence, any limit point of $(\tpi^n)_{n \geq 1}$ is concentrated on the unique probability distribution $\tP \in \A$ such that for all $t \geq 0$, $\tP_t = P(t)$.
\end{proof}

\subsection{Propagation of chaos in Wasserstein distance}\label{ss:cvWp} The original particle system defined by~\eqref{eq:sp} is exchangeable, therefore the propagation of chaos result stated in Proposition~\ref{prop:sp} implies that the distribution $P_t^{1,n}$ of $X_t^{1,n}$ converges weakly to $P(t)$ in $\P(\R)$. This convergence result can be strengthened in Wasserstein distance.

\begin{cor}\label{cor:cvWp}
  Under the nondegeneracy condition (D1) and the assumption that $m$ has a finite moment of order $p \geq 1$, then $P^{1,n}_t$ and $P(t)$ have a finite moment of order $p$, and 
  \begin{equation*}
    \lim_{n \to +\infty} W_p(P^{1,n}_t,P(t)) = 0, \qquad \lim_{n \to +\infty} \Exp[W_p^p(\mu_t^n, P(t))] \to 0.
  \end{equation*}
\end{cor}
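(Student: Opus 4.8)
The plan is to derive both limits from the weak convergence $\mu^n_t\to P(t)$ provided by Proposition~\ref{prop:sp}, supplemented by a uniform integrability estimate on the empirical $p$-th moments; the one-particle statement will then follow from the empirical one by a convexity argument.

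First I would establish the moment bounds. Fix $t\ge 0$. Using~\eqref{eq:sp}, the boundedness of $b$, $\sigma$ and of the sequence $(c_n)_{n\ge1}$, and the Burkholder--Davis--Gundy inequality, one gets a constant $C=C(p,t)<+\infty$ with $\sup_n\Exp[|X^{1,n}_t|^p]\le C$ and, more importantly, $\sup_n\sup_{1\le i\le n}\Exp[|X^{i,n}_t-X^i_0|^{2p}]\le C$: the increment $X^{i,n}_t-X^i_0$ has a drift part bounded by $t||b||_{\infty}$ and a martingale part with bracket at most $t(\sup_n c_n+||\sigma||_{\infty})^2$. In particular $P^{1,n}_t$, being the law of $X^{1,n}_t$, has a finite $p$-th moment. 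Since $\mu^n_t\to P(t)$ weakly in probability (a consequence of Proposition~\ref{prop:sp}) and $P(t)$ is deterministic, applying this to the bounded continuous functional $\nu\mapsto\int_\R(|x|^p\wedge M)\nu(\dd x)$ gives $\int_\R(|x|^p\wedge M)P(t)(\dd x)=\lim_n\Exp[|X^{1,n}_t|^p\wedge M]\le C$ for every $M$, hence $\int_\R|x|^pP(t)(\dd x)\le C<+\infty$ by monotone convergence.

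The key step is the uniform integrability of $\big\{\int_\R|x|^p\mu^n_t(\dd x)\big\}_n=\big\{\tfrac1n\sum_{i=1}^n|X^{i,n}_t|^p\big\}_n$. Here the main difficulty is that $m$ is only assumed to have a finite $p$-th moment, so no higher moment of the particles is bounded uniformly in $n$; instead I would split $|X^{i,n}_t|^p\le 2^{p-1}\big(|X^i_0|^p+|X^{i,n}_t-X^i_0|^p\big)$. The term $\tfrac1n\sum_i|X^i_0|^p$ converges in $L^1$ to $\Exp[|X^1_0|^p]$ by the strong law of large numbers for i.i.d.\ integrable variables, hence forms a uniformly integrable family; the term $\tfrac1n\sum_i|X^{i,n}_t-X^i_0|^p$ is bounded in $L^2$, by Jensen's inequality and the $L^{2p}$-bound above, hence is uniformly integrable. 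Consequently $\big\{\int_\R|x|^p\mu^n_t(\dd x)\big\}_n$ is uniformly integrable, and since $W_p(\mu^n_t,P(t))\le\big(\int_\R|x|^p\mu^n_t(\dd x)\big)^{1/p}+\big(\int_\R|x|^pP(t)(\dd x)\big)^{1/p}$ by the triangle inequality for $W_p$, the family $\big\{W_p^p(\mu^n_t,P(t))\big\}_n$ is uniformly integrable too.

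It then remains to prove $W_p^p(\mu^n_t,P(t))\to0$ in probability. I would first check that $\int_\R|x|^p\mu^n_t(\dd x)\to\int_\R|x|^pP(t)(\dd x)$ in probability: the truncated functional converges by the argument above, and the tails $\int_\R(|x|^p-M)^+\mu^n_t(\dd x)$ and $\int_\R(|x|^p-M)^+P(t)(\dd x)$ are uniformly (in $n$) small for $M$ large, by the uniform integrability and Markov's inequality. Combining this with $\mu^n_t\to P(t)$ weakly in probability, from any subsequence one extracts a further subsequence along which the weak convergence and the convergence of $p$-th moments both hold almost surely; along it, the characterization of convergence in $W_p$ (Villani~\cite{villani}) yields $W_p(\mu^n_t,P(t))\to0$ a.s., whence $W_p^p(\mu^n_t,P(t))\to0$ in probability. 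With the uniform integrability of the previous step this gives $\lim_n\Exp[W_p^p(\mu^n_t,P(t))]=0$. Finally, $P^{1,n}_t$ is the mean measure of $\mu^n_t$, i.e.\ $\langle P^{1,n}_t,f\rangle=\Exp[\langle\mu^n_t,f\rangle]$ for bounded measurable $f$; since $\nu\mapsto W_p^p(\nu,P(t))$ is convex on $\P(\R)$ (by a common coupling argument), Jensen's inequality gives $W_p^p(P^{1,n}_t,P(t))\le\Exp[W_p^p(\mu^n_t,P(t))]\to0$, which proves $W_p(P^{1,n}_t,P(t))\to0$.
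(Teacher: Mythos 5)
Your proof is correct in its essentials but organises the argument quite differently from the paper's. The paper proves the two limits separately: it obtains $W_p(P^{1,n}_t,P(t))\to 0$ directly from the weak convergence of $P^{1,n}_t$ (exchangeability plus Proposition~\ref{prop:sp}) together with uniform integrability of $(|X^{1,n}_t|^p)_{n\geq 1}$ and Villani's characterisation, and then treats $\Exp[W_p^p(\mu^n_t,P(t))]$ by expanding through the order-statistic formula~\eqref{eq:Wmunmu}, splitting $|Y^{i,n}_t-F_t^{-1}(u)|^p$ into a truncated part (a bounded continuous functional of $\mu^n_t$, handled by Proposition~\ref{prop:sp}) and a tail part (handled via~\eqref{eq:xyp}, exchangeability and the same uniform integrability). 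You instead prove the empirical-measure limit first, passing from convergence in probability (via the $W_p$-convergence characterisation applied along subsequences) to $L^1$-convergence by uniform integrability, and then deduce the one-particle limit from the convexity of $\nu\mapsto W_p^p(\nu,P(t))$ together with the identity $\langle P^{1,n}_t,f\rangle=\Exp[\langle\mu^n_t,f\rangle]$ and Jensen's inequality. That last step is a clean shortcut the paper does not use; your treatment of the empirical convergence is also more abstract, avoiding the explicit one-dimensional pseudo-inverse representation that the paper relies on. The paper's route is more hands-on and self-contained; yours is shorter and would generalise more readily.

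One point should be tightened. When you argue that $\int_\R|x|^p\mu^n_t(\dd x)\to\int_\R|x|^pP(t)(\dd x)$ in probability, you dismiss the tails $\int_\R(|x|^p-M)^+\mu^n_t(\dd x)$ as uniformly small in $n$ ``by the uniform integrability and Markov's inequality,'' where the uniform integrability established just before is that of $Z_n:=\int_\R|x|^p\mu^n_t(\dd x)$. Uniform integrability of $(Z_n)_n$ alone does not control $\Exp\bigl[\int_\R(|x|^p-M)^+\mu^n_t(\dd x)\bigr]$ uniformly in $n$: a point mass of empirical weight $1/n$ at $|x|^p\approx nM$ keeps $Z_n$ bounded by $M$ while placing essentially all of $Z_n$ above the threshold, so the tail contribution can stay of order $M$. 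What is actually needed is the uniform integrability of $(|X^{1,n}_t|^p)_{n\geq 1}$, since by exchangeability $\Exp\bigl[\int_\R(|x|^p-M)^+\mu^n_t(\dd x)\bigr]=\Exp\bigl[(|X^{1,n}_t|^p-M)^+\bigr]$. You already have every ingredient for this: $(|X^{1,n}_t-X^1_0|^p)_n$ is bounded in $L^2$ by your $L^{2p}$-estimate, hence uniformly integrable, and $|X^1_0|^p$ is a fixed integrable variable, so $(|X^{1,n}_t|^p)_n$ is uniformly integrable --- exactly the quantity the paper establishes and uses. With this substitution the tail step is justified and your proof is complete.
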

\begin{proof}
  Let $t \geq 0$. As just seen before the corollary, $P_t^{1,n}$ converges weakly to $P(t)$. To prove that this convergence holds in the Wasserstein distance of order $p$, it is sufficient to prove that the sequence $(|X_t^{1,n}|^p)_{n \geq 1}$ is uniformly integrable (see Villani~\cite[Theorem~6.9, p.~108]{villani}). For all $q \geq 1$,
  \begin{equation}\label{eq:EXtX0}
    \Exp\left(|X_t^{1,n}-X_0^1|^q\right) \leq 2^{q-1}\left((t||b||_{\infty})^q + \Exp\left(\left|\int_0^t \left(c_n + \sigma(H*\mu_s^n(X_s^{1,n}))\right) \dd W_s^1\right|^q\right)\right) \leq C,
  \end{equation}
  where $C$ does not depend on $n$. Thus, the sequence $(|X_t^{1,n}-X_0^1|^p)_{n \geq 1}$ is uniformly integrable, and since $|X_t^{1,n}|^p \leq 2^{p-1}(|X_t^{1,n}-X_0^1|^p + |X_0^1|^p)$ then the sequence $(|X_t^{1,n}|^p)_{n \geq 1}$ is uniformly integrable. Therefore $P_t^{1,n}$ and $P(t)$ have a finite moment of order $p$ and $W_p(P_t^{1,n},P(t)) \to 0$.
  
  Let $M \geq 0$. Then, by~\eqref{eq:Wmunmu},
  \begin{equation*}
    \begin{split}
      & W_p^p(\mu_t^n, P(t)) = \sum_{i=1}^n \int_{(i-1)/n}^{i/n} (|Y_t^{i,n}-F_t^{-1}(u)|^p-M)^+\dd u + \sum_{i=1}^n \int_{(i-1)/n}^{i/n} (|Y_t^{i,n}-F_t^{-1}(u)|^p \wedge M)\dd u\\
      & \leq \sum_{i=1}^n \int_{(i-1)/n}^{i/n} |Y_t^{i,n}-F_t^{-1}(u)|^p \ind{|Y_t^{i,n}-F_t^{-1}(u)|^p \geq M}\dd u + \sum_{i=1}^n \int_{(i-1)/n}^{i/n} (|Y_t^{i,n}-F_t^{-1}(u)|^p \wedge M)\dd u.
    \end{split}
  \end{equation*}
  
  On the one hand,
  \begin{equation*}
    \sum_{i=1}^n \int_{(i-1)/n}^{i/n} (|Y_t^{i,n}-F_t^{-1}(u)|^p \wedge M)\dd u = \int_0^1 (|(H*\mu_t^n)^{-1}(u)-F_t^{-1}(u)|^p \wedge M)\dd u,
  \end{equation*}
  and the function $\mu \in \P(\R) \mapsto \int_0^1 (|(H*\mu)^{-1}(u)-F_t^{-1}(u)|^p \wedge M)\dd u$ is continuous and bounded. Therefore, by Proposition~\ref{prop:sp},
  \begin{equation*}
    \lim_{n \to +\infty}\Exp\left(\sum_{i=1}^n \int_{(i-1)/n}^{i/n} (|Y_t^{i,n}-F_t^{-1}(u)|^p \wedge M)\dd u\right) = 0.
  \end{equation*}
  
  On the other hand, remarking that for all $x,y \in \R$,
  \begin{equation}\label{eq:xyp}
    \begin{split}
      |x-y|^p\ind{|x-y|^p \geq M} & \leq |x-y|^p\ind{|x| \geq |y|\vee M^{1/p}/2} + |x-y|^p\ind{|y| \geq |x|\vee M^{1/p}/2}\\
      & \leq 2^p|x|^p\ind{|x|^p \geq M/2^p} + 2^p|y|^p\ind{|y|^p \geq M/2^p},
    \end{split}
  \end{equation}
  we write
  \begin{equation*}
    \begin{split}
      & \sum_{i=1}^n \int_{(i-1)/n}^{i/n} |Y_t^{i,n}-F_t^{-1}(u)|^p\ind{|Y_t^{i,n}-F_t^{-1}(u)|^p \geq M}\dd u\\
      & \qquad \leq \frac{2^p}{n} \sum_{i=1}^n |Y_t^{i,n}|^p\ind{|Y_t^{i,n}|^p \geq M/2^p} + 2^p \int_0^1 |F_t^{-1}(u)|^p\ind{|F_t^{-1}(u)|^p \geq M/2^p}\dd u\\
      & \qquad = \frac{2^p}{n} \sum_{i=1}^n |X_t^{i,n}|^p\ind{|X_t^{i,n}|^p \geq M/2^p} + 2^p \int_0^1 |F_t^{-1}(u)|^p\ind{|F_t^{-1}(u)|^p \geq M/2^p}\dd u.
    \end{split}
  \end{equation*}
  We deduce from the exchangeability of the variables $X_t^{1,n}, \ldots, X_t^{n,n}$, the uniform integrability of $(|X_t^{1,n}|^p)_{n \geq 1}$ and the finiteness of $\int_0^1 |F_t^{-1}(u)|^p \dd u = \int_{\R} |x|^pP(t)(\dd x)$ that
  \begin{equation*}
    \lim_{M \to +\infty} \sup_{n \geq 1} \Exp\left(\sum_{i=1}^n \int_{(i-1)/n}^{i/n} |Y_t^{i,n}-F_t^{-1}(u)|^p\ind{|Y_t^{i,n}-F_t^{-1}(u)|^p \geq M}\dd u\right) = 0,
  \end{equation*}
  so that $\Exp[W_p^p(\mu_t^n, P(t))] \to 0$.
\end{proof}

\section{Contraction of the Wasserstein distance between two solutions}\label{s:contr}

Let $F_0$ and $G_0$ be the cumulative functions of two probability distributions with a finite first order moment. Under the condition (D1), by Proposition~\ref{prop:sp} there exist a unique probabilistic solution $F$ to the Cauchy problem~\eqref{eq:myPDE} with initial condition $F_0$, and a unique probabilistic solution $G$ to the Cauchy problem~\eqref{eq:myPDE} with initial condition $G_0$. This section addresses the behaviour of the flow $t \mapsto W_p(F_t,G_t)$. In Proposition~\ref{prop:contr} we prove that it is nonincreasing if $W_p(F_0,G_0) < +\infty$, using only the contractivity of the reordered particle system. Then, assuming the classical regularity of $F$ and $G$, we provide an explicit expression of the time derivative of the flow $t \mapsto W_p^p(F_t,G_t)$.

We point out the fact that we will sometimes call {\em expectation} or {\em moment} of a cumulative distribution function the expectation or the moment of the derivated probability distribution.

\subsection{Monotonicity of the flow} We first deduce from a natural coupling between two versions of the reordered particle system that the flow $t \mapsto W_p(F_t,G_t)$ is nonincreasing.

\begin{prop}\label{prop:contr}
  Assume that the nondegeneracy condition (D1) holds and that $F_0$ and $G_0$ have a finite first order moment. Then, for all $p \geq 1$,
  \begin{itemize}
    \item if $W_p(F_0,G_0) < +\infty$, then the flow $t \mapsto W_p(F_t,G_t)$ in nonincreasing;
    \item if $W_p(F_0,G_0) = +\infty$, then for all $t \geq 0$, $W_p(F_t,G_t) = +\infty$.
  \end{itemize}
\end{prop}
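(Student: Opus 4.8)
The plan is to couple two copies of the reordered particle system of Subsection~\ref{ss:spr}, driven by the same Brownian motion and issued from a comonotone coupling of the initial data, to establish a contraction estimate at the level of the particles, and then to pass to the limit $n \to +\infty$. Fix $n \geq 1$; let $(\beta^1, \ldots, \beta^n)$ be an $\R^n$-valued Brownian motion and let $(U_i)_{1 \leq i \leq n}$ be i.i.d.\ uniform random variables on $[0,1]$, independent of $\beta$, with increasing reordering $U_{(1)} \leq \cdots \leq U_{(n)}$. By Lemma~\ref{lem:tanaka}, applied with $\beta$ and the initial conditions $Y_0^{i,n} := F_0^{-1}(U_{(i)})$, resp.\ $\bar{Y}_0^{i,n} := G_0^{-1}(U_{(i)})$ (both in $D_n$, since $F_0^{-1}$ and $G_0^{-1}$ are nondecreasing), one gets two reordered particle systems $Y^n$, $\bar{Y}^n$ solving~\eqref{eq:spr}; as $(Y_0^{i,n})_i$, resp.\ $(\bar{Y}_0^{i,n})_i$, is the increasing reordering of a family of i.i.d.\ samples of the distribution with cumulative function $F_0$, resp.\ $G_0$, Proposition~\ref{prop:spr} applies: the empirical distributions $\tmu^n$ of $Y^n$ and $\bar{\mu}^n$ of $\bar{Y}^n$ converge in probability, with time-$t$ marginals $\tmu^n_t$, $\bar{\mu}^n_t$ converging in probability in $\P(\R)$ to the measures with cumulative functions $F_t$ and $G_t$. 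The point of driving both systems by the same $\beta$ is that $Y^{i,n}$ and $\bar{Y}^{i,n}$ carry the \emph{same} rank-dependent drift $b(i/n)$ and diffusion coefficient $c_n + \sigma(i/n)$, so that $Z^i_t := Y^{i,n}_t - \bar{Y}^{i,n}_t = Z^i_0 + (V^i_t - \bar{V}^i_t)$ is a continuous process of finite variation, $V$ and $\bar{V}$ denoting the reflection terms of $Y^n$ and $\bar{Y}^n$.

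The crucial step is that $t \mapsto \sum_{i=1}^n |Z^i_t|^p$ is nonincreasing. Since $z \mapsto |z|^p$ is $C^1$ (for $p = 1$, the quadratic-variation term in Tanaka's formula vanishes because $Z^i$ has finite variation), the chain rule gives $\dd |Z^i_t|^p = p|Z^i_t|^{p-1}\sgn(Z^i_t)(\dd V^i_t - \dd \bar{V}^i_t)$. Writing $\dd V^i_t = (\gamma^i_t - \gamma^{i+1}_t)\dd |V|_t$ and performing an Abel transformation, using $\gamma^1_t = \gamma^{n+1}_t = 0$,
\begin{equation*}
  \sum_{i=1}^n p|Z^i_t|^{p-1}\sgn(Z^i_t)\,\dd V^i_t = p\left(\sum_{i=2}^n \big(|Z^i_t|^{p-1}\sgn(Z^i_t) - |Z^{i-1}_t|^{p-1}\sgn(Z^{i-1}_t)\big)\gamma^i_t\right)\dd |V|_t,
\end{equation*}
and likewise for $\bar{V}$ with its coefficients $\bar{\gamma}$. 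Now $\dd|V|_t$-a.e.\ on $\{\gamma^i_t > 0\}$ one has $Y^{i,n}_t = Y^{i-1,n}_t$, hence $Z^{i-1}_t = Y^{i,n}_t - \bar{Y}^{i-1,n}_t \geq Y^{i,n}_t - \bar{Y}^{i,n}_t = Z^i_t$ because $\bar{Y}^n$ is increasingly ordered; as $z \mapsto |z|^{p-1}\sgn(z)$ is nondecreasing and $\gamma^i_t \geq 0$, each term in the sum above is $\leq 0$. Symmetrically, $\dd|\bar{V}|_t$-a.e.\ on $\{\bar{\gamma}^i_t > 0\}$ one has $\bar{Y}^{i,n}_t = \bar{Y}^{i-1,n}_t$, hence $Z^{i-1}_t = Y^{i-1,n}_t - \bar{Y}^{i,n}_t \leq Z^i_t$ because $Y^n$ is increasingly ordered, so that $-\sum_i p|Z^i_t|^{p-1}\sgn(Z^i_t)\,\dd \bar{V}^i_t \leq 0$ as well. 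Hence $\dd\big(\sum_i |Z^i_t|^p\big) \leq 0$, and by~\eqref{eq:Wmunmun} the flow $t \mapsto W_p^p(\tmu^n_t, \bar{\mu}^n_t) = (1/n)\sum_i |Z^i_t|^p$ is nonincreasing; in particular, for all $t \geq 0$,
\begin{equation*}
  W_p^p(\tmu^n_t, \bar{\mu}^n_t) \leq W_p^p(\tmu^n_0, \bar{\mu}^n_0) = \frac{1}{n}\sum_{i=1}^n \big|F_0^{-1}(U_i) - G_0^{-1}(U_i)\big|^p =: R_n.
\end{equation*}

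It remains to let $n \to +\infty$. By the strong law of large numbers (even if this value is $+\infty$) and~\eqref{eq:WinvCDF}, $R_n \to W_p^p(F_0, G_0)$ almost surely; extracting a subsequence along which this holds and along which $(\tmu^n_t, \bar{\mu}^n_t)$ converges almost surely in $\P(\R)^2$, the lower semicontinuity of the Wasserstein distance for the topology of weak convergence yields $W_p^p(F_t, G_t) \leq \liminf_n W_p^p(\tmu^n_t, \bar{\mu}^n_t) \leq \liminf_n R_n = W_p^p(F_0, G_0)$. When $W_p(F_0, G_0) = +\infty$, this is vacuous, but the conclusion then follows by contradiction: the estimate~\eqref{eq:EXtX0} together with the weak convergence of the law of $X^{1,n}_t$ to the measure with cumulative function $F_t$ gives $W_p(F_0, F_t) < +\infty$, and likewise $W_p(G_0, G_t) < +\infty$, for every $t$, so if $W_p(F_t, G_t)$ were finite for some $t$ the triangle inequality would force $W_p(F_0, G_0) < +\infty$. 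When $W_p(F_0, G_0) < +\infty$, the monotonicity of the flow follows by applying the above inequality after a time shift: for every $s \geq 0$, the map $(t,x) \mapsto F_{s+t}(x)$ is --- by~\eqref{eq:weak}, the fact that $F \in \F$ (so that $F_s$ has a finite first order moment), and the uniqueness statement of Proposition~\ref{prop:unicite-edp} --- the probabilistic solution of~\eqref{eq:myPDE} with initial condition $F_s$, and similarly for $G$, whence $W_p^p(F_{s+t}, G_{s+t}) \leq W_p^p(F_s, G_s)$ for all $t \geq 0$.

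The main obstacle is the particle-level contraction: the coupling must be arranged so that the difference $Z^i$ is of finite variation, after which one has to observe that at each reflecting face $\{y_i = y_{i-1}\}$ of one system it is the ordering of the \emph{other} system that forces the sign of $Z^{i-1}_t - Z^i_t$, and this is precisely what makes the drift of $\sum_i |Z^i_t|^p$ nonpositive. A secondary point is that, since only $W_p(F_0, G_0) < +\infty$ is assumed and not the finiteness of the individual $p$-th moments of $F_0$ and $G_0$, one cannot invoke Corollary~\ref{cor:cvWp} in the limit and must instead control it through the comonotone coupling of the initial data, via lower semicontinuity of $W_p$ and the law of large numbers for $R_n$.
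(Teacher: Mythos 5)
Your proof shares the paper's central mechanism: you build the same coupling of two reordered particle systems, driven by a common Brownian motion $\beta$ and issued from the comonotone coupling $(F_0^{-1}(U_{(i)}))_i$, $(G_0^{-1}(U_{(i)}))_i$, and you obtain the particle-level contraction by the same Abel transformation of the reflection terms combined with the monotonicity of $z \mapsto |z|^{p-1}\sgn(z)$; this is exactly the paper's inequality~\eqref{eq:decspr}. Where you genuinely depart from the paper is in how you pass to the limit $n \to +\infty$. The paper proves the full $L^1$-convergence $\lim_n \Exp\big(W_p^p(\tmu^{F,n}_t,\tmu^{G,n}_t)\big) = W_p^p(F_t,G_t)$ for each fixed $t$ via a truncation and uniform-integrability argument (patterned on Corollary~\ref{cor:cvWp}), after which monotonicity for arbitrary $0 \le s \le t$ is read off directly from~\eqref{eq:decspr}. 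You instead apply~\eqref{eq:decspr} only at $s=0$, compute the exact limit of the initial distances by the strong law of large numbers (which is available precisely because the coupling at $t=0$ is explicit, namely $\tfrac1n\sum_i|F_0^{-1}(U_i)-G_0^{-1}(U_i)|^p$), close the inequality at time $t$ by lower semicontinuity of $W_p$ under weak convergence, and then restore monotonicity of the whole flow by a time-shift argument, using the uniqueness of Proposition~\ref{prop:unicite-edp} to identify $F_{s+\cdot}$ as the probabilistic solution issued from $F_s$. This route is noticeably lighter, since it trades a rather long tail estimate for a one-line lower-semicontinuity bound plus the LLN; the trade-off is that you must justify the time-shift claim, which requires a small argument (subtracting the weak formulation~\eqref{eq:weak} at two times $s$ and $t$ and approximating by a smooth cutoff of the test function near $r=s$) that you state rather than carry out. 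Your treatment of the case $W_p(F_0,G_0)=+\infty$ is a compressed version of the paper's Fatou/triangle-inequality argument, and your remark that for $p=1$ the local-time term in Tanaka's formula vanishes because $Z^i$ has finite variation (so $\langle Z^i\rangle = 0$) is the same point the paper handles with the convention $|z|^{p-2}z=0$ at $z=0$.
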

\begin{proof}
  We deduce the monotonicity property from the contractive behaviour of the reordered particle system. Let $(\beta^1, \ldots, \beta^n)$ be a $\R^n$-valued Brownian motion and let $U^1, \ldots, U^n$ be independent uniform variables on $[0,1]$. Let us denote by $(U^{(1)}, \ldots, U^{(n)})$ the increasing reordering of $(U^1, \ldots, U^n)$. For all $1 \leq i \leq n$, let $Y_0^{F,i} := F_0^{-1}(U^{(i)})$ and $Y_0^{G,i} := G_0^{-1}(U^{(i)})$. By Lemma~\ref{lem:tanaka}, there exists a unique strong solution $(Y^F,V^F) \in C([0,+\infty),D_n\times\R^n)$ to the reflected stochastic differential equation 
  \begin{equation*}
    Y_t^{F,i,n} = Y_0^{F,i} + b(i/n) t + (c_n + \sigma(i/n))\beta^i_t + V^{F,i}_t,
  \end{equation*}
  and similarly, we denote by $(Y^G,V^G)$ the unique strong solution in $C([0,+\infty),D_n\times\R^n)$ to the reflected stochastic differential equation
  \begin{equation*}
    Y_t^{G,i,n} = Y_0^{G,i} + b(i/n) t + (c_n + \sigma(i/n))\beta^i_t + V^{G,i}_t.
  \end{equation*}
  By the beginning of Subsection~\ref{ss:spr} and the Yamada-Watanabe theorem, the process $Y^{F,n}$ (resp. $Y^{G,n}$) has the same distribution as the increasing reordering of the particle system $X^{F,n}$ (resp. $X^{G,n}$) solution to~\eqref{eq:sp} with initial conditions i.i.d. according to the cumulative distribution function $F_0$ (resp. $G_0$). In particular, the propagation of chaos result of Proposition~\ref{prop:spr} applies to the empirical distributions $\tmu^{F,n} := (1/n)\sum_{i=1}^n \delta_{Y^{F,i,n}}$ and $\tmu^{G,n} := (1/n)\sum_{i=1}^n \delta_{Y^{G,i,n}}$.
  
  Now, for all $t \geq 0$, \eqref{eq:Wmunmun} yields $W_p^p(\tmu^{F,n}_t, \tmu^{G,n}_t) = (1/n)\sum_{i=1}^n |Y_t^{F,i,n} - Y_t^{G,i,n}|^p$. The following inequality, the proof of which is postponed below, is crucial: 
  \begin{equation}\label{eq:decspr}
    \forall 0 \leq s \leq t, \qquad W_p^p(\tmu^{F,n}_t, \tmu^{G,n}_t) \leq W_p^p(\tmu^{F,n}_s, \tmu^{G,n}_s).
  \end{equation}
  
  \sk
  {\bf Case $W_p(F_0,G_0) < +\infty$.} If both $F_0$ and $G_0$ have a finite moment of order $p$, then owing to Corollary~\ref{cor:cvWp}, one can extract a subsequence along which $W_p^p(\tmu^{F,n}_t, \tmu^{G,n}_t)$ goes to $W_p^p(F_t,G_t)$ and $W_p^p(\tmu^{F,n}_s, \tmu^{G,n}_s)$ goes to $W_p^p(F_s,G_s)$ almost surely, then conclude by using~\eqref{eq:decspr}. 
  
  Assuming only $W_p(F_0,G_0) < +\infty$, we shall now proceed as in the proof of Corollary~\ref{cor:cvWp} to show that for all $t \geq 0$, 
  \begin{equation*}
    \lim_{n \to +\infty} \Exp(W_p^p(\tmu^{F,n}_t, \tmu^{G,n}_t)) = W_p^p(F_t,G_t),
  \end{equation*}
  which results in the claimed assertion thanks to~\eqref{eq:decspr}.
  
  For all $M \geq 0$, by~\eqref{eq:Wmunmun} we write
  \begin{equation*}
    \begin{split}
      W_p^p(\tmu_t^{F,n},\tmu_t^{G,n}) & = \frac{1}{n}\sum_{i=1}^n |Y_t^{F,i,n}-Y_t^{G,i,n}|^p\\
      & = \frac{1}{n}\sum_{i=1}^n (|Y_t^{F,i,n}-Y_t^{G,i,n}|^p \wedge M) + \left(|Y_t^{F,i,n}-Y_t^{G,i,n}|^p-M\right)^+.
    \end{split}
  \end{equation*}
  
  By Proposition~\ref{prop:sp}, $\tmu_t^{F,n}$ converges in probability to the probability distribution $\dd F_t$ with cumulative distribution function $F_t$, and similarly $\tmu_t^{G,n}$ converges in probability to $\dd G_t$. Therefore, the couple $(\tmu_t^{F,n},\tmu_t^{G,n})$ converges in probability to $(\dd F_t, \dd G_t)$ and
  \begin{equation*}
    \lim_{n \to +\infty} \Exp\left(\frac{1}{n}\sum_{i=1}^n (|Y_t^{F,i,n}-Y_t^{G,i,n}|^p \wedge M)\right) = \int_0^1 (|F_t^{-1}(u)-G_t^{-1}(u)|^p \wedge M) \dd u.
  \end{equation*}
  By the monotone convergence theorem and~\eqref{eq:WinvCDF},
  \begin{equation*}
    \lim_{M \to +\infty} \lim_{n \to +\infty} \Exp\left(\frac{1}{n}\sum_{i=1}^n (|Y_t^{F,i,n}-Y_t^{G,i,n}|^p \wedge M)\right) = W_p^p(F_t,G_t) \in [0,+\infty].
  \end{equation*}
  
  It now remains to check that
  \begin{equation}\label{eq:contr:1}
    \lim_{n \to +\infty} \Exp\left(\frac{1}{n}\sum_{i=1}^n (|Y_t^{F,i,n}-Y_t^{G,i,n}|^p-M)^+\right) = 0.
  \end{equation}
  Using~\eqref{eq:xyp} twice results in
  \begin{equation*}
    \begin{split}
      \frac{1}{n}\sum_{i=1}^n \left(|Y_t^{F,i,n}-Y_t^{G,i,n}|^p-M\right)^+ & \leq \frac{1}{n}\sum_{i=1}^n |Y_t^{F,i,n}-Y_t^{G,i,n}|^p\ind{|Y_t^{F,i,n}-Y_t^{G,i,n}|^p \geq M}\\
      & \leq \frac{4^p}{n}\sum_{i=1}^n |Y_t^{F,i,n}-Y_0^{F,i}|^p\ind{|Y_t^{F,i,n}-Y_0^{F,i}|^p \geq M/4^p}\\
      & \quad + \frac{4^p}{n}\sum_{i=1}^n |Y_t^{G,i,n}-Y_0^{G,i}|^p\ind{|Y_t^{G,i,n}-Y_0^{G,i}|^p \geq M/4^p}\\
      & \quad + \frac{2^p}{n}\sum_{i=1}^n |Y_0^{F,i}-Y_0^{G,i}|^p\ind{|Y_0^{F,i}-Y_0^{G,i}|^p \geq M/2^p}.
    \end{split}
  \end{equation*}
  On the one hand, by the construction of $Y_0^{F,i}$ and $Y_0^{G,i}$,
  \begin{equation*}
    \begin{split}
      & \Exp\left(\frac{1}{n}\sum_{i=1}^n |Y_0^{F,i}-Y_0^{G,i}|^p\ind{|Y_0^{F,i}-Y_0^{G,i}|^p \geq M/2^p}\right)\\
      & \qquad = \Exp\left(\frac{1}{n}\sum_{i=1}^n |F_0^{-1}(U^{(i)})-G_0^{-1}(U^{(i)})|^p\ind{|F_0^{-1}(U^{(i)})-G_0^{-1}(U^{(i)})|^p \geq M/2^p}\right)\\
      & \qquad = \Exp\left(\frac{1}{n}\sum_{i=1}^n |F_0^{-1}(U^i)-G_0^{-1}(U^i)|^p\ind{|F_0^{-1}(U^i)-G_0^{-1}(U^i)|^p \geq M/2^p}\right)\\
      & \qquad = \int_0^1 |F_0^{-1}(u)-G_0^{-1}(u)|^p\ind{|F_0^{-1}(u)-G_0^{-1}(u)|^p \geq M/2^p} \dd u,
    \end{split}
  \end{equation*}
  and the right-hand side does not depend on $n$. Since $W_p(F_0,G_0) < +\infty$, it goes to $0$ when $M \to +\infty$. On the other hand,
  \begin{equation*}
    \begin{split}
      \Exp\left(\frac{1}{n}\sum_{i=1}^n |Y_t^{F,i,n}-Y_0^{F,i}|^p\ind{|Y_t^{F,i,n}-Y_0^{F,i}|^p \geq M/4^p}\right) & \leq \frac{4^p}{M}\Exp\left(\frac{1}{n}\sum_{i=1}^n |Y_t^{F,i,n}-Y_0^{F,i}|^{p+1}\right)\\
      & \leq \frac{4^p}{M}\Exp\left(\frac{1}{n}\sum_{i=1}^n |X_t^{F,i,n}-X_0^{F,i}|^{p+1}\right),   
    \end{split}
  \end{equation*}
  where we have used the inequality~\eqref{eq:Wmunmun} in the last line. By~\eqref{eq:EXtX0}, there exists $C > 0$ independent of $n$ such that $\Exp(|X_t^{F,i,n}-X_0^{F,i}|^{p+1}) \leq C$. Then
  \begin{equation*}
    \lim_{M \to +\infty} \limsup_{n \to +\infty} \Exp\left(\frac{1}{n}\sum_{i=1}^n |Y_t^{F,i,n}-Y_0^{F,i}|^p\ind{|Y_t^{F,i,n}-Y_0^{F,i}|^p \geq M/4^p}\right) = 0;
  \end{equation*}
  and likewise,
  \begin{equation*}
    \lim_{M \to +\infty} \limsup_{n \to +\infty} \Exp\left(\frac{1}{n}\sum_{i=1}^n |Y_t^{G,i,n}-Y_0^{G,i}|^p\ind{|Y_t^{G,i,n}-Y_0^{G,i}|^p \geq M/4^p}\right) = 0,
  \end{equation*}
  which completes the proof of~\eqref{eq:contr:1}.

  \sk
  {\bf Case $W_p(F_0,G_0) = +\infty$.} By the triangle inequality,
  \begin{equation*}
    W_p(F_0,G_0) \leq W_p(F_0,F_t) + W_p(G_0,G_t) + W_p(F_t,G_t).
  \end{equation*}
  According to~Proposition~\ref{prop:spr}, there exists a subsequence (that we still index by $n$ for convenience) along which $\tmu_0^{F,n}$ converges to the distribution with cumulative function $F_0$ almost surely in $\P(\R)$, and $\tmu_t^{F,n}$ converges to the distribution with cumulative function $F_t$ almost surely in $\P(\R)$. Recalling that the Wasserstein distance is lower semicontinuous on $\P(\R)$ (see~\cite[Remark~6.12]{villani}), we get $W_p(F_0,F_t) \leq \liminf_{n \to +\infty} W_p(\tmu_0^{F,n},\tmu_t^{F,n})$ so that by Fatou's lemma, \eqref{eq:Wmunmun} and~\eqref{eq:EXtX0},
  \begin{equation*}
    W_p^p(F_0,F_t) \leq \liminf_{n \to +\infty} \Exp\left(\frac{1}{n} \sum_{i=1}^n |Y_t^{F,i,n}-Y_0^{F,i}|^p\right) < +\infty,
  \end{equation*}
  and similarly $W_p(G_0,G_t) < +\infty$. As a consequence, if $W_p(F_0,G_0) = +\infty$ then $W_p(F_t,G_t) = +\infty$.

  \sk
  {\bf Proof of~\eqref{eq:decspr}.} Recall that
  \begin{equation*}
    \begin{split}
      & Y_t^{F,i,n} = Y_0^{F,i} + b(i/n) t + (c_n + \sigma(i/n))\beta^i_t + V^{F,i}_t,\\
      & Y_t^{G,i,n} = Y_0^{G,i} + b(i/n) t + (c_n + \sigma(i/n))\beta^i_t + V^{G,i}_t,
    \end{split}
  \end{equation*}
  with $\dd V_t^{F,i} = (\gamma_t^{F,i}-\gamma_t^{F,i+1})\dd |V^F|_t$, $\dd V_t^{G,i} = (\gamma_t^{G,i}-\gamma_t^{G,i+1})\dd |V^G|_t$. Thus, for all $1 \leq i \leq n$, the process $Y^{F,i,n} - Y^{G,i,n}$ is of finite variation, hence
  \begin{equation*}
    \begin{split}
      \dd \sum_{i=1}^n |Y^{F,i,n}_t - Y^{G,i,n}_t|^p & = p\sum_{i=1}^n |Y^{F,i,n}_t - Y^{G,i,n}_t|^{p-2}(Y^{F,i,n}_t - Y^{G,i,n}_t)\dd (Y^{F,i,n}_t - Y^{G,i,n}_t)\\
      &= p\sum_{i=1}^n |Y^{F,i,n}_t - Y^{G,i,n}_t|^{p-2}(Y^{F,i,n}_t - Y^{G,i,n}_t)(\gamma^{F,i}_t - \gamma^{F,i+1}_t)\dd|V^F|_t\\ 
      &\quad + p\sum_{i=1}^n |Y^{G,i,n}_t - Y^{F,i,n}_t|^{p-2}(Y^{G,i,n}_t - Y^{F,i,n}_t)(\gamma^{G,i}_t - \gamma^{G,i+1}_t)\dd|V^G|_t,
    \end{split}
  \end{equation*}
  where, for all $p \geq 1$, we take the convention that $|z|^{p-2}z=0$ when $z=0$. The two terms of the last member above are symmetric; we only deal with the first one, which we rewrite $S_t\dd|V^F|_t$. By the Abel transform, $S_t = p\sum_{i=2}^n \gamma^{F,i}_t u(Y_t^{F,i-1,n},Y_t^{G,i-1,n},Y_t^{F,i,n},Y_t^{G,i,n})$, where $u(x_1,y_1,x_2,y_2) := |x_2-y_2|^{p-2}(x_2-y_2) - |x_1-y_1|^{p-2}(x_1-y_1)$. Recall that $\dd|V^F|_t$-a.e., $\gamma^{F,i}_t \geq 0$ and $\gamma^{F,i}_t(Y^{F,i,n}_t - Y^{F,i-1,n}_t) = 0$, and remark that for fixed $y_1 \leq y_2$, the expression $u(x,y_1,x,y_2)$ remains nonpositive when $x \in \R$. Then $\dd|V^F|_t$-a.e., $S_t \leq 0$ and the proof of~\eqref{eq:decspr} is completed.
\end{proof}

\subsection{Derivative of the flow} We will now compute the time derivative of the flow $t \mapsto W_p^p(F_t,G_t)$ when $F$ and $G$ have the classical regularity. We first derive a nonlinear evolution equation for the pseudo-inverse function $F_t^{-1}$. Of course, the same holds for $G_t^{-1}$.

\begin{lem}\label{lem:EDPF-1}
  Assume that the uniform ellipticity condition (D3) and the regularity condition (R1) hold, that $F_0$ has a finite first order moment, that the probabilistic solution $F$ to the Cauchy problem~\eqref{eq:myPDE} with initial conditon $F_0$ has the classical regularity and that for all $0 < t_1 < t_2$, the function $(t,x) \mapsto \partial_x F_t(x)$ is bounded on $[t_1,t_2]\times\R$ . Then the pseudo-inverse function $(t,u) \mapsto F_t^{-1}(u)$ is $C^{1,2}$ on $(0,+\infty) \times (0,1)$ and satisfies
  \begin{equation}\label{eq:EDPF-1}
    \partial_t F_t^{-1}(u) = b(u) - \frac{1}{2}\partial_u\left(\frac{a(u)}{\partial_u F_t^{-1}(u)}\right).
  \end{equation}
\end{lem}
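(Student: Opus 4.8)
The plan is to deduce~\eqref{eq:EDPF-1} from the implicit function theorem applied to the level sets of $F$, once the space derivative $p_t := \partial_x F_t$ has been shown to be strictly positive on $(0,+\infty)\times\R$. For this positivity, observe that $a\in C^1$ makes the primitive $A$ of class $C^2$ with $A''=a'$, so that differentiating~\eqref{eq:myPDE} in $x$ --- in the distributional sense on $(0,+\infty)\times\R$, since the classical regularity of $F$ need not provide a third space derivative --- shows that $p$ is a weak solution of the divergence-form parabolic equation
\begin{equation*}
  \partial_t p_t(x) = \partial_x\left(\frac{1}{2}a(F_t(x))\partial_x p_t(x) + \left(\frac{1}{2}a'(F_t(x))p_t(x) - b(F_t(x))\right)p_t(x)\right).
\end{equation*}
By (D3), (R1), the classical regularity of $F$ and the assumed boundedness of $\partial_x F$ on every strip $[t_1,t_2]\times\R$, the coefficients of this equation are bounded on such strips, and the leading coefficient $\frac{1}{2}a(F_t(x))$ is bounded below by $\ua/2>0$. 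Moreover $F_t$, being the cumulative distribution function of a probability measure, satisfies $p_t\geq 0$ and $\int_{\R} p_t(x)\dd x = 1$, so $p$ does not vanish identically. The parabolic strong maximum principle --- or, equivalently, the weak Harnack inequality for nonnegative supersolutions of divergence-form parabolic equations with bounded measurable coefficients --- then yields $p_t(x)>0$ for all $t>0$ and $x\in\R$. This step is, I expect, the main obstacle; the delicate point is the clean justification that $p$ is an admissible weak solution of the above equation, an alternative route being to identify $p_t$, via Figalli's theorem as in the proof of Lemma~\ref{lem:regul}, with the density of a diffusion having bounded coefficients and uniformly positive diffusion, and then to invoke Aronson's Gaussian lower bound.

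Granting this positivity, for each $t>0$ the function $F_t$ is a strictly increasing $C^2$ bijection from $\R$ onto $(0,1)$, so for $u\in(0,1)$ its pseudo-inverse $F_t^{-1}(u)$ coincides with the ordinary inverse and $F_t(F_t^{-1}(u))=u$. The function $(t,u,x)\mapsto F_t(x)-u$ is jointly $C^1$ --- because $\partial_t F$ and $\partial_x F$ are continuous --- and its $x$-partial $p$ never vanishes, so the implicit function theorem gives that $(t,u)\mapsto F_t^{-1}(u)$ is $C^1$ on $(0,+\infty)\times(0,1)$, with
\begin{equation*}
  \partial_u F_t^{-1}(u) = \frac{1}{p_t(F_t^{-1}(u))}, \qquad \partial_t F_t^{-1}(u) = -\frac{\partial_t F_t(F_t^{-1}(u))}{p_t(F_t^{-1}(u))}.
\end{equation*}
Since $p_t=\partial_x F_t$ is itself $C^1$ in $x$ and positive, the first identity shows $\partial_u F_t^{-1}$ is again $C^1$ in $u$; combined with the continuity of $\partial_x^2 F$ and $\partial_t F$, this gives that $(t,u)\mapsto F_t^{-1}(u)$ is $C^{1,2}$ on $(0,+\infty)\times(0,1)$.

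It remains to derive~\eqref{eq:EDPF-1}. Rewrite the classical equation~\eqref{eq:myPDE} as $\partial_t F_t(x) = \frac{1}{2}\partial_x(a(F_t(x))p_t(x)) - b(F_t(x))p_t(x)$, and substitute $x = F_t^{-1}(u)$, so that $F_t(x)=u$ and $p_t(x)=1/\partial_u F_t^{-1}(u)$, into the second identity above; this gives
\begin{equation*}
  \partial_t F_t^{-1}(u) = b(u) - \frac{1}{2p_t(x)}\partial_x\bigl(a(F_t(x))p_t(x)\bigr)\Big|_{x=F_t^{-1}(u)}.
\end{equation*}
Finally, setting $\Psi(x) := a(F_t(x))p_t(x)$ --- which is $C^1$ in $x$ since $a\in C^1$ and $F_t$, $p_t$ are $C^1$ in $x$ --- one has $a(u)/\partial_u F_t^{-1}(u) = \Psi(F_t^{-1}(u))$, hence by the chain rule $\partial_u\bigl(a(u)/\partial_u F_t^{-1}(u)\bigr) = \Psi'(F_t^{-1}(u))\,\partial_u F_t^{-1}(u) = \Psi'(x)/p_t(x)$ with $x=F_t^{-1}(u)$. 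This is precisely twice the subtracted term above, so~\eqref{eq:EDPF-1} follows.
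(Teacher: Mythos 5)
Your argument is correct and the final computation --- implicit function theorem on $(t,x,u)\mapsto F_t(x)-u$, differentiating the identity $F_t(F_t^{-1}(u))=u$ in $t$, and rewriting the result in divergence form in $u$ --- is essentially identical to the paper's, merely organized a little differently (the paper writes out \eqref{eq:solcla} and substitutes \eqref{eq:duF-1}, you package the same algebra through the auxiliary function $\Psi(x)=a(F_t(x))p_t(x)$). Where you diverge is in establishing the strict positivity of $p_t=\partial_x F_t$, which you rightly flag as the delicate point. The paper does not go through the maximum principle or the weak Harnack inequality for the (only weak) derivative equation; instead it invokes the Gaussian \emph{lower} bound of Lemma~\ref{lem:aronson} (Aronson's estimate \eqref{eq:aronson}), which is proved in Appendix~\ref{app:quant} precisely by the route you mention as an alternative: identify $p_t$ as the law of the nonlinear diffusion via Figalli's theorem and Corollary~\ref{cor:nlmp}, then apply Aronson's two-sided bounds to the transition density. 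That route has the advantage of avoiding the verification that $p$ is an admissible weak solution of the differentiated divergence-form equation (your classical regularity only gives $p\in C^{0,1}$, so the meaning of the equation for $p$ and the applicability of Moser-type Harnack inequalities would indeed need careful justification), and it also yields the quantitative tail estimates that are used later in the proof of Proposition~\ref{prop:quant}. So your proposal is sound, but the paper's choice of the Aronson bound is the cleaner and more economical way to close the gap you identify.
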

\begin{proof}
  On the one hand, since $F$ is a classical solution to~\eqref{eq:myPDE}, then 
  \begin{equation}\label{eq:solcla}
    \partial_t F_t(x) = \frac{1}{2}\left(a'(F_t(x))(\partial_x F_t(x))^2 + a(F_t(x))\partial_x^2 F_t(x)\right) - b(F_t(x))\partial_x F_t(x).
  \end{equation}
  
  On the other hand, the lower bound in the Aronson estimate~\eqref{eq:aronson} allows to apply the implicit functions theorem to $(t,x,u) \mapsto F_t(x)-u$ and deduce that $(t,u) \mapsto F_t^{-1}(u)$ is $C^{1,2}$ on $(0,+\infty) \times \R$. Besides,  
  \begin{equation}\label{eq:duF-1}
    \partial_u F_t^{-1}(u) = \frac{1}{\partial_x F_t(F_t^{-1}(u))} \qquad \text{and} \qquad \partial_u^2 F_t^{-1}(u) = -\frac{\partial_x^2 F_t(F_t^{-1}(u))}{[\partial_x F_t(F_t^{-1}(u))]^3}.
  \end{equation}
  
  For all $t > 0$, since $F_t$ is a continuous function then $F_t(F_t^{-1}(u)) = u$, so that derivating with respect to $t$ yields $0 = \partial_t F_t(F_t^{-1}(u)) + \partial_x F_t(F_t^{-1}(u))\partial_t F_t^{-1}(u)$. Using~\eqref{eq:solcla} and~\eqref{eq:duF-1}, we write
  \begin{equation*}
    \partial_t F_t^{-1}(u) = b(u) - \frac{1}{2}\left(\frac{a'(u)}{\partial_u F_t^{-1}(u)} - \frac{a(u)\partial_u^2 F_t^{-1}(u)}{[\partial_u F_t^{-1}(u)]^2}\right),
  \end{equation*}
  from which we deduce~\eqref{eq:EDPF-1}.
\end{proof}

For all function $f : \R \to [0,1]$, we define the function $\ta(f,\cdot) : \R \to [0,1]$ by
\begin{equation*}
  \ta(f,x) := \ind{x \geq 0} (1-f(x)) + \ind{x \leq 0} f(x).
\end{equation*}

Then the main result of this subsection is the following.

\begin{prop}\label{prop:quant}
  Assume that the conditions of Lemma~\ref{lem:EDPF-1} are satisfied with both $F$ and $G$. Let $p \geq 2$ such that $W_p(F_0,G_0) < +\infty$ and $|x|^{p-1}(\ta(F_0,x) + \ta(G_0,x)) \to 0$ when $x \to \pm\infty$. Then for all $0 < t_1 < t_2$,
  \begin{equation*}
    \begin{split}
      & W_p^p(F_{t_2},G_{t_2}) - W_p^p(F_{t_1},G_{t_1})\\
      & \qquad = -\frac{p(p-1)}{2}\int_{t_1}^{t_2} \int_0^1 a(u)|F_t^{-1}(u)-G_t^{-1}(u)|^{p-2}\frac{\left(\partial_u F_t^{-1}(u) - \partial_u G_t^{-1}(u)\right)^2}{\partial_u F_t^{-1}(u)\partial_u G_t^{-1}(u)}\dd u\dd t.
    \end{split}
  \end{equation*} 
\end{prop}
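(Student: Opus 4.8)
The plan is to compute $\partial_t W_p^p(F_t,G_t)$ by differentiating the representation $W_p^p(F_t,G_t)=\int_0^1|F_t^{-1}(u)-G_t^{-1}(u)|^p\dd u$ from~\eqref{eq:WinvCDF}, then integrate in time. By Lemma~\ref{lem:EDPF-1}, both $F_t^{-1}$ and $G_t^{-1}$ are $C^{1,2}$ on $(0,+\infty)\times(0,1)$ and satisfy~\eqref{eq:EDPF-1}. Writing $D_t(u):=F_t^{-1}(u)-G_t^{-1}(u)$, the chain rule gives $\partial_t|D_t(u)|^p=p|D_t(u)|^{p-2}D_t(u)\,\partial_t D_t(u)$, and since the drift term $b(u)$ is the same in~\eqref{eq:EDPF-1} for $F$ and for $G$ it cancels:
\begin{equation*}
  \partial_t D_t(u) = -\frac12\partial_u\left(\frac{a(u)}{\partial_u F_t^{-1}(u)} - \frac{a(u)}{\partial_u G_t^{-1}(u)}\right).
\end{equation*}
Hence, assuming one may differentiate under the integral sign,
\begin{equation*}
  \frac{\dd}{\dd t} W_p^p(F_t,G_t) = -\frac{p}{2}\int_0^1 |D_t(u)|^{p-2}D_t(u)\,\partial_u\!\left(\frac{a(u)}{\partial_u F_t^{-1}(u)} - \frac{a(u)}{\partial_u G_t^{-1}(u)}\right)\dd u.
\end{equation*}
The next step is integration by parts in $u$ on a subinterval $[\epsilon,1-\epsilon]$. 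Since $\partial_u\big(|D_t(u)|^{p-2}D_t(u)\big)=(p-1)|D_t(u)|^{p-2}\big(\partial_u F_t^{-1}(u)-\partial_u G_t^{-1}(u)\big)$, the boundary terms are
\begin{equation*}
  -\frac{p}{2}\left[|D_t(u)|^{p-2}D_t(u)\,a(u)\left(\frac{1}{\partial_u F_t^{-1}(u)}-\frac{1}{\partial_u G_t^{-1}(u)}\right)\right]_{\epsilon}^{1-\epsilon},
\end{equation*}
and the bulk term is
\begin{equation*}
  -\frac{p(p-1)}{2}\int_{\epsilon}^{1-\epsilon} a(u)|D_t(u)|^{p-2}\big(\partial_u F_t^{-1}(u)-\partial_u G_t^{-1}(u)\big)\left(\frac{1}{\partial_u F_t^{-1}(u)}-\frac{1}{\partial_u G_t^{-1}(u)}\right)\dd u,
\end{equation*}
which equals the claimed integrand after writing $\frac{1}{x}-\frac{1}{y}=-\frac{x-y}{xy}$ (note the combined sign is negative, consistent with Proposition~\ref{prop:contr}). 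Integrating over $t\in[t_1,t_2]$ and using Fubini then gives the stated identity once the boundary contributions are shown to vanish as $\epsilon\to0$.

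The main obstacle is the justification of the two limiting operations: differentiation under the integral (or equivalently, writing $W_p^p(F_{t_2},G_{t_2})-W_p^p(F_{t_1},G_{t_1})$ as a double integral of the pointwise $t$-derivative and applying Fubini) and the vanishing of the boundary terms at $u=\epsilon$ and $u=1-\epsilon$ as $\epsilon\to0$. For the boundary terms I would use the hypothesis $|x|^{p-1}(\ta(F_0,x)+\ta(G_0,x))\to0$ together with a monotonicity argument: by Proposition~\ref{prop:contr} applied to $F$ (resp. $G$) against a translate of itself, or more directly by propagating the tail decay of $F_0,G_0$ forward in time, one controls $|F_t^{-1}(u)|^{p-1}$ and $|G_t^{-1}(u)|^{p-1}$ near $u=0,1$; this is the analogue of Lemma~\ref{lem:FtF0}-type tail estimates, and I expect an auxiliary lemma stating that $|x|^{p-1}\ta(F_t,x)\to0$ uniformly for $t$ in compact subsets of $(0,+\infty)$. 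Combined with the Aronson-type bounds giving $a(u)/\partial_u F_t^{-1}(u)=a(u)\,\partial_x F_t(F_t^{-1}(u))$ bounded (using (D3), (R1), and the assumed boundedness of $\partial_x F_t$ on $[t_1,t_2]\times\R$), the boundary terms are $O\big(\epsilon^{-1}(\,|F_t^{-1}(\epsilon)|^{p-1}+|G_t^{-1}(\epsilon)|^{p-1})\cdot\text{(bounded)}\cdot|D_t(\epsilon)|\big)$, wait — more carefully, $D_t(\epsilon)=F_t^{-1}(\epsilon)-G_t^{-1}(\epsilon)$ and one bounds $|D_t(\epsilon)|^{p-1}$ by $2^{p-2}(|F_t^{-1}(\epsilon)|^{p-1}+|G_t^{-1}(\epsilon)|^{p-1})$, each of which tends to $0$; the factor $a(u)\big(1/\partial_u F_t^{-1}-1/\partial_u G_t^{-1}\big)$ stays bounded. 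For the interchange of limit and integral I would dominate the integrand on $[\epsilon,1-\epsilon]$ uniformly in $\epsilon$ (the bulk integrand is manifestly of one sign, so monotone convergence in $\epsilon$ applies directly once integrability of the full integrand over $(0,1)$ is established), and to pass from the differential identity to the integrated one over $[t_1,t_2]$ I would invoke continuity in $t$ of all quantities on the compact set together with the local boundedness of $\partial_u F_t^{-1}$, $\partial_u G_t^{-1}$ away from $0$ provided by~\eqref{eq:duF-1} and~\eqref{eq:aronson}. The regularity hypotheses on $F$ and $G$ (classical regularity plus boundedness of $\partial_x F_t$, $\partial_x G_t$) are exactly what makes these steps go through.
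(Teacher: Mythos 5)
Your proposal reproduces exactly the strategy that the paper discusses and then explicitly abandons in Remark~\ref{rk:marchepas}: differentiate $W_p^p(F_t,G_t)=\int_0^1|F_t^{-1}(u)-G_t^{-1}(u)|^p\dd u$ in $t$ using Lemma~\ref{lem:EDPF-1}, integrate by parts in $u$ on $[\epsilon,1-\epsilon]$, and send $\epsilon\to 0$. The authors state plainly that they \emph{could not} give a rigorous account of the vanishing of the boundary terms, and Remark~\ref{rk:carrillo} explains why: the boundary term involves $|F_t^{-1}(\epsilon)-G_t^{-1}(\epsilon)|^{p-1}$ (which typically diverges as $\epsilon\to0$) multiplied by densities $\partial_x F_t(F_t^{-1}(\epsilon))$, $\partial_x G_t(G_t^{-1}(\epsilon))$, and controlling this product would require Gaussian-type upper bounds on the densities. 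But in the intended application (comparison against the stationary solution) the tails are merely exponential, so no uniform Gaussian estimate is available.

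There is also a concrete error in your analysis of the boundary terms. You bound $|D_t(\epsilon)|^{p-1}\le 2^{p-2}\bigl(|F_t^{-1}(\epsilon)|^{p-1}+|G_t^{-1}(\epsilon)|^{p-1}\bigr)$ and then assert that ``each of which tends to $0$.'' This is false: $F_t^{-1}(\epsilon)\to-\infty$ and $G_t^{-1}(\epsilon)\to-\infty$ as $\epsilon\to 0$ (assuming unbounded support, which is forced by the lower Aronson bound~\eqref{eq:aronson}), so $|F_t^{-1}(\epsilon)|^{p-1}\to+\infty$. The quantity that vanishes under the hypothesis is $|x|^{p-1}\ta(F_0,x)$, i.e.\ the power weighted against the \emph{tail mass}; this does not translate into $|F_t^{-1}(\epsilon)|^{p-1}\to 0$. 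What you would actually need is decay of a product of the form $|F_t^{-1}(\epsilon)|^{p-1}\cdot\bigl|\partial_x F_t(F_t^{-1}(\epsilon))-\partial_x G_t(G_t^{-1}(\epsilon))\bigr|$, and the only pointwise upper bound on the densities available here is~\eqref{eq:aronson}, which contains the extra non-Gaussian term $\ta(F_{t_1},x/2)$; pushing the estimate through would require a separate argument that you do not provide and that the authors also did not find.

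The paper's actual proof sidesteps the pseudo-inverse boundary issue entirely. It starts from the identity of Lemma~\ref{lem:Wpp},
\[
W_p^p(F,G)=p(p-1)\int_{\R^2}\ind{x<y}\bigl([G(x)-F(y)]^+ +[F(x)-G(y)]^+\bigr)(y-x)^{p-2}\dd x\dd y,
\]
works with the truncated double integral $I_M(F_t,G_t)$ over $[-M,M]^2$, differentiates in $t$ using the divergence form of the PDE for $F_t$, $G_t$, integrates by parts in $x$, and decomposes $\frac{\dd}{\dd t}I_M$ into three terms $J^1_M,J^2_M,J^3_M$. The boundary contributions (corresponding to your $[\,\cdot\,]_\epsilon^{1-\epsilon}$) appear here as terms carrying factors $M^{p-1}\ta(F_t,M)$, $M^{p-1}\ta(G_t,-M)$, $M^{p-1}\partial_x F_t(M)$, etc.; these are controlled using the forward tail propagation of Lemma~\ref{lem:FtF0} and the Aronson upper bound of Lemma~\ref{lem:aronson}, and they \emph{do} vanish as $M\to+\infty$ precisely under the tail hypothesis $|x|^{p-1}(\ta(F_0,x)+\ta(G_0,x))\to 0$. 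So while the underlying PDE input is the same, the change of variables from $u\in(0,1)$ to $x\in\R$ is essential: it turns the hard-to-control boundary quantity $|F_t^{-1}(\epsilon)|^{p-1}$ into the tractable quantity $M^{p-1}\ta(F_t,\pm M)$, which the assumptions directly control. Your proposal would need to overcome this exact obstruction, and as written it does not.
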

\begin{proof}
  See Appendix~\ref{app:quant}.
\end{proof}

\begin{rk}\label{rk:marchepas}
  A straightforward strategy to prove Proposition~\ref{prop:quant} is the following: for $\epsilon_1, \epsilon_2 \in (0,1/2)$, let
  \begin{equation*}
    W_{p,\epsilon_1,\epsilon_2}^p(F_t,G_t) := \int_{\epsilon_1}^{1-\epsilon_2} |F_t^{-1}(u) - G_t^{-1}(u)|^p\dd u,
  \end{equation*}
  then certainly $W_{p,\epsilon_1,\epsilon_2}^p(F_t,G_t) \to W_p^p(F_t,G_t)$ when $\epsilon_1, \epsilon_2 \to 0$. Now,
  \begin{equation*}
    \begin{aligned}
      & \frac{\dd}{\dd t} W_{p,\epsilon_1,\epsilon_2}^p(F_t,G_t)\\
      & \qquad = p \int_{\epsilon_1}^{1-\epsilon_2} (F_t^{-1}(u)-G_t^{-1}(u))|F_t^{-1}(u)-G_t^{-1}(u)|^{p-2} \left(\partial_t F_t^{-1}(u) - \partial_t G_t^{-1}(u)\right)\dd u\\
      & \qquad = \frac{p}{2}\int_{\epsilon_1}^{1-\epsilon_2} (F_t^{-1}(u)-G_t^{-1}(u))|F_t^{-1}(u)-G_t^{-1}(u)|^{p-2} \partial_u\left(\frac{a(u)}{\partial_u G_t^{-1}(u)} - \frac{a(u)}{\partial_u F_t^{-1}(u)}\right)\dd u\\
      & \qquad = \frac{p}{2} \left[a(u)(F_t^{-1}(u)-G_t^{-1}(u))|F_t^{-1}(u)-G_t^{-1}(u)|^{p-2}\left(\frac{1}{\partial_u G_t^{-1}(u)} - \frac{1}{\partial_u F_t^{-1}(u)}\right)\right]_{\epsilon_1}^{1-\epsilon_2}\\
      & \qquad \quad - \frac{p(p-1)}{2}\int_{\epsilon_1}^{1-\epsilon_2} a(u)|F_t^{-1}(u)-G_t^{-1}(u)|^{p-2} \frac{\left(\partial_u F_t^{-1}(u)-\partial_u G_t^{-1}(u)\right)^2}{\partial_u F_t^{-1}(u)\partial_u G_t^{-1}(u)}\dd u,
    \end{aligned}
  \end{equation*}
  where we have used~\eqref{eq:EDPF-1} at the second line and integrated by parts in the last line. Hence, Proposition~\ref{prop:quant} holds as soon as the boundary terms vanish, i.e.
  \begin{equation*}
    \liminf_{\epsilon_1, \epsilon_2 \to 0} \int_{t_1}^{t_2} \left[a(F_t^{-1}-G_t^{-1})|F_t^{-1}-G_t^{-1}|^{p-2}\left(\frac{1}{\partial_u G_t^{-1}} - \frac{1}{\partial_u F_t^{-1}}\right)\right]_{\epsilon_1}^{1-\epsilon_2} \dd t = 0.
  \end{equation*}
  However, we were not able to provide a rigorous account of this statement. In Appendix~\ref{app:quant}, we use a different expression of $W_p^p(F_t,G_t)$ in terms of $F_t$, $G_t$ to compute the time derivative of the flow.
\end{rk}

\begin{rk}\label{rk:carrillo}
  Although the setting is different, the result of Proposition~\ref{prop:contr} obtained by a probabilistic approximation is comparable to the result of Carrillo, Di Francesco and Lattanzio~\cite[Theorem~5.1]{cdl}, the proof of which relies on the deterministic operator splitting method. In Lemma~\ref{lem:EDPF-1}, the nonlinear evolution equation for the pseudo-inverse of the solution to the Cauchy problem generalizes in a rigorous way the proposed extensions of the work by Carrillo and Toscani~\cite[Section~3]{ct}. In Carrillo, Gualdani and Toscani~\cite{cgt}, the time derivative of the flow of the Wasserstein distance between two solutions is computed by the method described in Remark~\ref{rk:marchepas} for the case of compactly supported solutions at all times, therefore the boundary terms necessarily vanish in the integration by parts.
  
  The same method is also applied by Alfonsi, Jourdain and Kohatsu-Higa~\cite{euler}, where the authors get rid of the boundary terms using Gaussian estimates on the density. As Lemma~\ref{lem:aronson} shows, under the uniform ellipticity condition (D3), such estimates still hold in our case as soon as the tails of $F_0$ or $G_0$ are not heavier than Gaussian. Since we are willing to use Proposition~\ref{prop:quant} to compare $F_t$ with the stationary solution $F_{\infty}$, we would therefore need the tails of $F_{\infty}$ not to be heavier than Gaussian. But according to Remark~\ref{rk:poincare} below, under the condition (D3), the tails of $F_{\infty}$ cannot be lighter than exponential.
\end{rk}

\section{Convergence to equilibrium}\label{s:conv}

This section is divided into three parts. In Subsection~\ref{ss:stat}, we solve the stationary equation. In Subsection~\ref{ss:equi}, we prove the convergence of solutions to stationary solutions. In Subsection~\ref{ss:rate}, we discuss the (lack of) rate of convergence to equilibrium.

\subsection{The stationary equation}\label{ss:stat} We recall that the {\em stationary equation} is the following:
\begin{equation}\label{eq:stat}
  \frac{1}{2} \partial^2_x \big(A(F_{\infty}(x))\big) - \partial_x \big(B(F_{\infty}(x))\big) = 0
\end{equation} 
As mentionned in the introduction, the stationary solutions for the Cauchy problem are the cumulative distribution functions, with a finite first order moment, solving~\eqref{eq:stat} in the sense of distributions. In Proposition~\ref{prop:stat}, we solve the stationary equation, and we give a criterion for integrability in Corollary~\ref{cor:integr}.

\begin{prop}\label{prop:stat}
  Under the nondegeneracy condition (D1), a necessary and sufficient condition for the existence of cumulative distribution functions solving the stationary equation is $B(1)=0$, $B(u) \geq 0$ and the local integrability of the function $a/2B$ on $(0,1)$. Then all the solutions are continuous.
  
  If in addition $B(u) > 0$ for all $u \in (0,1)$, which corresponds to the equilibrium condition (E1), then $F_{\infty}$ is a solution if and only if there exists $\bar{x} \in \R$ such that for all $x \in \R$, $F_{\infty}(x) = \Psi^{-1}(x + \bar{x})$, where the function $\Psi$ is defined by
  \begin{equation}\label{eq:Psi}
    \forall u \in (0,1), \qquad \Psi(u) := \int_{1/2}^u \frac{a(v)}{2B(v)}\dd v.
  \end{equation}
  In this case, $\bar{x}=\Psi(F_{\infty}(0))$.
\end{prop}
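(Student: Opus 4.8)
The plan is to integrate the stationary equation once in $x$: all the stated information is read off from the resulting first-order relation, and the characterisation through $\Psi$ then amounts to integrating that relation a second time.

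\emph{Necessity and continuity.} Let $F_\infty$ be a cumulative distribution function solving~\eqref{eq:stat} in the sense of distributions. Since $a$ and $b$ are bounded on $[0,1]$, the functions $A(F_\infty)$ and $B(F_\infty)$ are bounded and measurable, and~\eqref{eq:stat} says exactly that the distribution $\tfrac12\partial_x(A(F_\infty)) - B(F_\infty)$ has vanishing derivative; as $\R$ is connected it equals a constant $C$, whence $\tfrac12\partial_x(A(F_\infty)) = B(F_\infty) + C$ in the sense of distributions. Now $\partial_x(A(F_\infty))$ is a nonnegative measure of total mass $A(1)-A(0)=A(1)$ (because $u\mapsto A(u)$ and $F_\infty$ are nondecreasing and $F_\infty$ ranges from $0$ to $1$), and it coincides with the locally integrable function $2(B(F_\infty)+C)$; hence it is absolutely continuous with bounded density $2(B(F_\infty)+C)\geq 0$. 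In particular $x\mapsto A(F_\infty(x))$ has no jumps, and as $A$ is an increasing homeomorphism of $[0,1]$ under (D1), $F_\infty=A^{-1}(A(F_\infty))$ is continuous — which already proves that every solution is continuous, and then that $A(F_\infty)\in C^1(\R)$ with $(A(F_\infty))'(x)=2(B(F_\infty(x))+C)$. Integrability of this derivative over $\R$, together with $F_\infty(x)\to 0,1$ as $x\to\mp\infty$ and $B(0)=0$, forces its limits $2C$ and $2(B(1)+C)$ at $\mp\infty$ to vanish, i.e. $C=0$ and $B(1)=0$. Moreover $B(F_\infty)\geq 0$ a.e., and since $F_\infty$ is continuous and onto $[0,1]$ this yields $B\geq 0$ on $(0,1)$ (a point $u_0\in(0,1)$ with $B(u_0)<0$ would make $F_\infty^{-1}((u_0-\delta,u_0+\delta))$ a non-empty open, hence positive-measure, set on which $B(F_\infty)<0$). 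Finally, for $[\alpha,\beta]\subset(0,1)$ pick $x_\alpha<x_\beta$ with $F_\infty(x_\alpha)=\alpha$, $F_\infty(x_\beta)=\beta$; the change of variables $v=F_\infty(x)$ in $\int_{x_\alpha}^{x_\beta}(A(F_\infty))'(x)\,\dd x$, using $(A(F_\infty))'=2B(F_\infty)$ and $\dd(A(F_\infty))=a(F_\infty)\,\dd F_\infty$, gives $\int_\alpha^\beta \tfrac{a(v)}{2B(v)}\,\dd v\leq x_\beta-x_\alpha<+\infty$, that is, the local integrability of $a/2B$ on $(0,1)$.

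\emph{Sufficiency.} Assume $B(1)=0$, $B\geq 0$ on $(0,1)$ and $a/2B\in L^1_{\mathrm{loc}}((0,1))$. Then $\Psi$ from~\eqref{eq:Psi} is well defined, continuous and nondecreasing on $(0,1)$, and since under (D1) $a$ cannot vanish on a subinterval, it is strictly increasing, hence a homeomorphism of $(0,1)$ onto $(\Psi(0^+),\Psi(1^-))$, an open interval containing $0$. Set $F_\infty(x):=0$ for $x\leq\Psi(0^+)$, $F_\infty(x):=1$ for $x\geq\Psi(1^-)$, and $F_\infty(x):=\Psi^{-1}(x)$ in between (any translate works). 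This $F_\infty$ is a continuous cumulative distribution function; inside its support $\Psi(F_\infty(x))=x$, so $\tfrac{a(F_\infty(x))}{2B(F_\infty(x))}F_\infty'(x)=1$, i.e. $(A(F_\infty))'(x)=a(F_\infty(x))F_\infty'(x)=2B(F_\infty(x))$, while at the two edges of the support the right-hand side tends to $2B(0)=0$ and $2B(1)=0$; hence $A(F_\infty)\in C^1(\R)$ with $(A(F_\infty))'=2B(F_\infty)$ continuous. Consequently $\tfrac12\partial_x^2(A(F_\infty))=\tfrac12\partial_x(2B(F_\infty))=\partial_x(B(F_\infty))$ in the sense of distributions, so $F_\infty$ solves~\eqref{eq:stat}.

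\emph{The case (E1) and the value of $\bar x$.} If in addition $B>0$ on $(0,1)$, existence follows from the construction above. Conversely, let $F_\infty$ be any solution and let $(x_-,x_+)=\{x:0<F_\infty(x)<1\}$; there $(A(F_\infty))'=2B(F_\infty)>0$, so $A(F_\infty)$, hence $F_\infty$, is strictly increasing, and differentiating $x\mapsto\Psi(F_\infty(x))$ with $a(F_\infty)F_\infty'=(A(F_\infty))'=2B(F_\infty)$ gives $\tfrac{\dd}{\dd x}\Psi(F_\infty(x))=1$. Thus $\Psi(F_\infty(x))=x+\bar x$ on $(x_-,x_+)$ for some constant $\bar x$, and since $\Psi^{-1}$ extends continuously by $0$ and $1$ outside $(\Psi(0^+),\Psi(1^-))$, $F_\infty(x)=\Psi^{-1}(x+\bar x)$ for all $x\in\R$; taking $x=0$ identifies $\bar x=\Psi(F_\infty(0))$.

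\emph{Expected main difficulty.} The routine-looking but genuinely delicate points are the ``integrate once'' step — passing rigorously from the distributional equation to the relation $\tfrac12\partial_x(A(F_\infty))=B(F_\infty)+C$ and then to the absolute continuity and $C^1$ regularity of $A(F_\infty)$ — and the change-of-variables argument establishing local integrability of $a/2B$, which must be carried out on the set where $F_\infty$ is strictly increasing and must accommodate possible plateaux of $F_\infty$ when $B$ vanishes at interior points; one must also be careful with the behaviour of $F_\infty$ at the ends of its support when $\Psi(0^+)$ or $\Psi(1^-)$ is finite, both in the construction of the solution and in the identification of $\bar x$.
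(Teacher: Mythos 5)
Your plan coincides with the paper's: integrate the stationary equation once to get $\tfrac12\partial_x(A(F_\infty))=B(F_\infty)+C$, read off continuity of $F_\infty$ from the monotonicity of $A$ under (D1), send $x\to\pm\infty$ to get $C=0$ and $B(1)=0$, obtain local integrability of $a/2B$ by a change of variables, and then build $\Psi^{-1}$ for sufficiency and identify all solutions as translates when $B>0$. However, in two places your argument leans on pointwise derivatives of $F_\infty=\Psi^{-1}$ that need not exist under (D1) alone, and this is precisely where the paper's Stieltjes machinery is doing real work.

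First, in the sufficiency step you write $\Psi(F_\infty(x))=x$, differentiate to get $\tfrac{a(F_\infty)}{2B(F_\infty)}F_\infty'=1$, and then $a(F_\infty)F_\infty'=2B(F_\infty)$. But $\Psi$ is only absolutely continuous with $\Psi'=a/2B$ possibly $0$ or $+\infty$ on a dense set, so $F_\infty=\Psi^{-1}$ need not be differentiable and the chain rule in this pointwise form is not justified; worse, one cannot conclude from an a.e.\ derivative identity that $A\circ F_\infty$ is absolutely continuous without more work. The paper sidesteps this entirely: it verifies that $\Psi^{-1}$ is a distributional solution by testing against $\phi\in\Cc(\R)$, using the integration-by-parts formula, the chain rule $\dd(A(\Psi^{-1}(x)))=a(\Psi^{-1}(x))\dd\Psi^{-1}(x)$ for a $C^1$ composed with a continuous finite-variation function, and the change-of-variable formula (Revuz--Yor (4.5), (4.9)) to turn $\int\phi\,a(\Psi^{-1})\dd\Psi^{-1}$ into $\int\phi\,B(\Psi^{-1})\dd x$. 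The same remark applies to your differentiation of $x\mapsto\Psi(F_\infty(x))$ in the $B>0$ case: the paper works with the Stieltjes measure $\dd(\Psi(F(x)))=\Psi'(F(x))\dd F(x)$ and uses $(1/2)a(F)\dd F=B(F)\dd x$ to conclude $\dd(\Psi(F(x)))=\dd x$, again avoiding any pointwise differentiability claim on $F$.

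Second, your change-of-variable argument for local integrability of $a/2B$ is stated too quickly. You need the measure identity $(1/2)a(F)\dd F=B(F)\dd x$, and to pass from that to $\tfrac{a(F)}{2B(F)}\dd F\leq\dd x$ you must handle the set $U=\{u:B(u)=0,\ a(u)\neq0\}$, where the quotient is $+\infty$. The paper observes that since $(1/2)a(F)\dd F$ has density $B(F)$ with respect to Lebesgue measure, the measure $a(F)\dd F$ assigns no mass to $\{B(F)=0\}$, hence $\dd F$-a.e.\ $F(x)\notin U$; with the convention $a/2B=0$ when $a=B=0$ the Stieltjes change of variable then yields $\int_\alpha^\beta\tfrac{a}{2B}\dd u\leq x_+-x_-$. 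You flag exactly these points as ``delicate'' at the end, which is right --- but flagging is not resolving. Filling them in the way the paper does, via the Revuz--Yor calculus rather than pointwise derivatives, is what completes the proof.
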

\begin{proof}[Proof of Proposition~\ref{prop:stat}]
  We first prove the necessary condition. Let $F$ be a cumulative distribution function (we shall write $F$ instead of $F_{\infty}$ in the proof), solving~\eqref{eq:stat} in the sense of distributions. Then there exists $c \in \R$ such that the function $x \mapsto (1/2)A(F(x))$ is absolutely continuous with respect to the Lebesgue measure, with density $B(F(x)) + c$. Since by the condition (D1), $A$ is increasing, then $F$ is continuous. Hence, $B(F(x))+c$ is a nonnegative, continuous and integrable function, so that taking the limit $x \to -\infty$ yields $B(0)+c=c=0$. We deduce $B(u) \geq 0$ and $B(1)=0$ by taking the limit $x \to +\infty$.
  
  It remains to prove that $a(u)/2B(u)$ is locally integrable in $(0,1)$. We use the convention that $a(u)/2B(u)=0$ when $a(u)=B(u)=0$, and we define $U := \{u \in (0,1) : B(u) = 0 \text{ and } a(u) \not= 0\}$. Let $0 < \alpha < \beta < 1$ and $x_- := F^{-1}(\alpha)$, $x_+ := F^{-1}(\beta)$. We denote by $\dd F(x)$ the Stieltjes measure associated with the continuous function $F$ of finite variation. Then by the chain rule formula~\cite[(4.6) p.~6]{revuz}, the Radon measure $(1/2)a(F(x))\dd F(x)$ has the density $B(F(x))$ with respect to the Lebesgue measure. Therefore $\dd F(x)$-a.e., $F(x) \not\in U$. By the change of variable formula~\cite[(4.9) p.~8]{revuz},
  \begin{equation*}
    \int_{x_-}^{x_+} \frac{a(F(x))}{2B(F(x))}\ind{F(x) \not\in U}\dd F(x) = \int_{x_-}^{x_+} \frac{a(F(x))}{2B(F(x))}\dd F(x) = \int_{\alpha}^{\beta} \frac{a(u)}{2B(u)}\dd u,
  \end{equation*}
  and the left-hand side is bounded by $x_+-x_- < +\infty$.
  
  We now prove that the condition is sufficient. For $u \in (0,1)$, let us define $\Psi(u)$ as in~\eqref{eq:Psi}. Then $\Psi$ is absolutely continuous and since $A$ is increasing, so is $\Psi$. Thus $\Psi^{-1}$ is continuous, with finite variation. In case $\Psi$ has a finite limit in $1$ (resp. $0$), we write $\Psi^{-1}(x) = 1$ (resp. $0$) for $x \in [\Psi(1),+\infty)$ (resp. $(-\infty,\Psi(0)]$) so that $\Psi^{-1}$ is a cumulative distribution function on $\R$. We now check that $\Psi^{-1}$ is a solution, in the sense of distributions, of the stationary equation. Let $\phi\in\Cc(\R)$ be a test function. Then by the integration by parts formula~\cite[(4.5) p.~6]{revuz} and the chain rule formula applied to $A(\Psi^{-1}(x))$,
  \begin{equation*}
    -\frac{1}{2}\int_{\R} \phi'(x) A(\Psi^{-1}(x))\dd x = \frac{1}{2}\int_{\R} \phi(x) a(\Psi^{-1}(x))\dd \Psi^{-1}(x).
  \end{equation*}
  By the change of variable formula and the definition of $\Psi$,
  \begin{equation*}
    \frac{1}{2}\int_{\R} \phi(x) a(\Psi^{-1}(x))\dd \Psi^{-1}(x) = \frac{1}{2}\int_0^1 \phi(\Psi(u)) a(u)\dd u = \int_0^1 \phi(\Psi(u)) B(u) \Psi'(u)\dd u,
  \end{equation*}
  and performing a new change of variables in the last member above yields
  \begin{equation*}
    -\frac{1}{2}\int_{\R} \phi'(x) A(\Psi^{-1}(x))\dd x = \int_{\R} \phi(x) B(\Psi^{-1}(x))\dd x ,
  \end{equation*}
  i.e. $\Psi^{-1}$ is a solution in the sense of distributions of the stationary equation. 
  
  We finally assume that $B(u) > 0$ for all $u \in (0,1)$ and prove that $F$ is a solution if and only if it is a translation of $\Psi^{-1}$. If $F$ is a solution, since $B(u) > 0$ then $\Psi$ is $C^1$ on $(0,1)$ so that the chain rule formula gives
  \begin{equation*}
    \dd (\Psi(F(x))) = \Psi'(F(x))\dd F(x) = \frac{a(F(x))}{2B(F(x))}\dd F(x) = \dd x,
  \end{equation*}
  where the last equality holds due to $B(u) > 0$. Then $\Psi(F(x)) - \Psi(F(0)) = x$ and $F(x)=\Psi^{-1}(x + \bar{x})$ with $\bar{x}=\Psi(F(0))$. Reciprocally it is immediate that all the translations of $\Psi^{-1}$ solve the stationary equation.
\end{proof}

\begin{rk}
  When the condition that $B(u) > 0$ on $(0,1)$ is not fulfilled, then one can exhibit solutions that are not translations of each other. For instance, let $a(u) = u(1-u)|u-1/2|^{3/2}$ and $B(u) = u(1-u)(u-1/2)^2$. Then $a$ is continuous on $[0,1]$, its antiderivative satisfies (D1), $B$ is $C^1$ on $[0,1]$, $B(0)=B(1)=0$ and $B(u) \geq 0$. Besides, $\Psi(u) = \sgn(u-1/2)|u-1/2|^{1/2}$. For all $h \geq 0$, let us define
  \begin{equation*}
    F_{\infty,h}(x) := \left\{\begin{aligned}
      & 0 & x < -1/\sqrt{2},\\
      & 1/2 - x^2 & -1/\sqrt{2} \leq x < 0,\\
      & 1/2 & 0 \leq x < h,\\
      & 1/2 + (x-h)^2 & h \leq x < h + 1/\sqrt{2},\\
      & 1 & x \geq h + 1/\sqrt{2}.
    \end{aligned}\right.
  \end{equation*}
  Then $F_{\infty,0} = \Psi^{-1}$ and for all $h > 0$, $F_{\infty,h}$ solves the stationary equation although it is not a translation of $\Psi^{-1}$.
\end{rk}

In order to apply the results of Section~\ref{s:contr}, we need criteria ensuring the existence of a first order moment as well as the classical regularity for a stationary solution $F_{\infty}$. They come as corollaries to Proposition~\ref{prop:stat}.

\begin{cor}\label{cor:regul}
  Under the nondegeneracy condition (D2), the regularity condition (R1) and the equilibrium condition (E1), all the stationary solutions $F_{\infty}$ are $C^2$ on $\R$.
\end{cor}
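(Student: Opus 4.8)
The plan is to feed the explicit description of stationary solutions supplied by Proposition~\ref{prop:stat} into the inverse function theorem, and then to examine by hand the points at which $F_\infty$ attains the boundary values $0$ and $1$.

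Since (D2) implies (D1) and the equilibrium condition (E1) is assumed, Proposition~\ref{prop:stat} applies and every stationary solution is of the form $F_\infty(x) = \Psi^{-1}(x+\bar x)$ for some $\bar x \in \R$, with $\Psi(u) = \int_{1/2}^u a(v)/(2B(v))\,\dd v$. First I would check that $\Psi$ is $C^2$ on $(0,1)$ with $\Psi' > 0$ there: the function $B$ is $C^1$ on $[0,1]$ (its derivative is the continuous function $b$) and positive on $(0,1)$ by (E1), the function $a$ is $C^1$ on $[0,1]$ by (R1), so $\Psi' = a/(2B)$ is $C^1$ on $(0,1)$, and it is positive there because $a > 0$ on $(0,1]$ by (D2). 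The inverse function theorem then yields $\Psi^{-1} \in C^2\big((\Psi(0^+),\Psi(1^-))\big)$, hence $F_\infty \in C^2$ on the open set $\{x : 0 < F_\infty(x) < 1\} = \big(\Psi(0^+)-\bar x,\ \Psi(1^-)-\bar x\big)$. On its complement $F_\infty$ is locally constant, equal to $0$ or to $1$, hence trivially $C^\infty$ there.

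What remains is the $C^2$-matching at the (at most two) endpoints of $\{0 < F_\infty < 1\}$. On the right the difficulty disappears on its own: from $B(1)=0$ one gets $0 < B(u) \le ||b||_{\infty}(1-u)$, while $a(u) \ge a(1)/2 > 0$ near $u=1$ by (D2) and continuity, so $a/(2B) \ge c/(1-u)$ near $1$ and $\Psi(1^-) = +\infty$, i.e. $F_\infty < 1$ everywhere. If moreover $a(0) > 0$, the analogous estimate at $u = 0$ gives $\Psi(0^+) = -\infty$, so $F_\infty > 0$ everywhere and $F_\infty \in C^2(\R)$. Otherwise there is a single boundary point $x_0 = \Psi(0^+) - \bar x$, with $F_\infty \equiv 0$ on $(-\infty, x_0]$; for $x$ just above $x_0$, differentiating $\Psi(F_\infty(x)) = x+\bar x$ gives $F_\infty'(x) = 2B(F_\infty(x))/a(F_\infty(x))$ and then $F_\infty''(x) = (2B/a)'(F_\infty(x))\,F_\infty'(x)$, and using $B(0)=0$ together with the behaviour of $a$ near $0$ one verifies that both tend to $0$ as $x \downarrow x_0$, which matches the vanishing left-hand derivatives and gives $F_\infty \in C^2(\R)$.

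I expect this last gluing at the left boundary to be the only real obstacle: it is only in play when $\Psi(0^+)$ is finite, i.e. when a flat piece $\{F_\infty = 0\}$ is present, and showing that the one-sided second derivative vanishes there comes down to controlling $2B(u)/a(u)$ as $u \to 0^+$, hence to the behaviour of $a$ near $0$; the mirror difficulty at the right boundary never materialises because $\Psi(1^-) = +\infty$ holds automatically under (D2) and (E1).
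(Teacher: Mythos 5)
Your overall strategy is the same as the paper's: use the explicit formula $F_\infty = \Psi^{-1}(\cdot+\bar{x})$ from Proposition~\ref{prop:stat}, check that $\Psi$ is $C^2$ with $\Psi'>0$ on $(0,1)$, and apply the inverse function theorem. The paper's one-line proof stops there, implicitly treating $\Psi$ as a $C^2$ bijection from $(0,1)$ onto $\R$. You go further and notice, correctly, that the inverse function theorem only gives regularity on $(\Psi(0^+),\Psi(1^-))-\bar{x}$, that $\Psi(1^-)=+\infty$ always holds under (D2) and (E1), and that $\Psi(0^+)=-\infty$ when $a(0)>0$, so that the only delicate case is $a(0)=0$ with $\Psi(0^+)$ finite. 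Identifying this case is a genuine improvement in care over the paper's own proof.

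However, your resolution of that remaining case is wrong. The claim that $F_\infty'(x) = 2B(F_\infty(x))/a(F_\infty(x))$ and $F_\infty''(x) = (2B/a)'(F_\infty(x))\,F_\infty'(x)$ ``both tend to $0$ as $x\downarrow x_0$'' is not a consequence of $B(0)=0$ and the behaviour of $a$ near $0$: when $a(0)=0$ as well, $2B(u)/a(u)$ is a $0/0$ indeterminate and can have any nonnegative limit, or no limit. Concretely, take $a(u)=u$ and $b(u)=1-2u$, so $B(u)=u(1-u)$. Then (D2), (R1) and (E1) all hold, $a/(2B)=1/(2(1-u))$ is bounded near $0$, so $\Psi(0^+)=-\tfrac12\ln 2$ is finite, and $\Psi^{-1}(x)=1-\tfrac12 e^{-2x}$ for $x\ge -\tfrac12\ln 2$, extended by $0$ to the left. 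Here $2B(u)/a(u)=2(1-u)\to 2$ as $u\to 0^+$, so $(\Psi^{-1})'(x_0^+)=2\neq 0=(\Psi^{-1})'(x_0^-)$: the stationary solution is not even $C^1$ at the left endpoint. (One can also arrange $a(u)=u^{3/2}$ to make $F_\infty'$ blow up there.) So the gluing step you invoke simply fails in general; no argument relying only on (D2), (R1), (E1) can close that case, since the conclusion itself is false in that regime. The statement of Corollary~\ref{cor:regul} as used later in the paper is safe because Theorem~\ref{theo:equi} additionally assumes the uniform ellipticity (D3), which forces $a(0)\ge\underline{a}>0$ and hence $\Psi(0^+)=-\infty$, putting one back in the unproblematic case you treated correctly.
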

\begin{proof}
  By Proposition~\ref{prop:stat} and the condition (E1), it is enough to prove that $\Psi^{-1}$ is $C^2$ on $\R$, which follows from the inverse function theorem, since $\Psi$ is $C^2$ on $(0,1)$ by (R1) and (E1), and $\Psi'(u)>0$ for all $u \in (0,1)$ by (D2). 
\end{proof}

\begin{cor}\label{cor:integr}
  Under the nondegeneracy condition (D1) and the equilibrium condition (E1), the solutions to the stationary equation have a finite first order moment if and only if (E2) holds.
\end{cor}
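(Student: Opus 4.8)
The plan is to start from the explicit description of the stationary solutions provided by Proposition~\ref{prop:stat} and to reduce the finiteness of the first order moment to a Tonelli computation.

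Under (D1) and (E1), Proposition~\ref{prop:stat} guarantees that every solution $F_{\infty}$ to the stationary equation writes $F_{\infty}(x) = \Psi^{-1}(x+\bar{x})$ for some $\bar{x} \in \R$, where $\Psi$ is given by~\eqref{eq:Psi}. If $X$ denotes a random variable with cumulative distribution function $F_{\infty}$, then $X-\bar{x}$ has cumulative distribution function $\Psi^{-1}$, and $\Exp|X| < +\infty$ if and only if $\Exp|X-\bar{x}| < +\infty$; hence it is enough to treat the case $\bar{x} = 0$, that is $F_{\infty} = \Psi^{-1}$.

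I would then write $m_{\infty}$ for the probability distribution with cumulative distribution function $\Psi^{-1}$. Since, by construction in the proof of Proposition~\ref{prop:stat}, the pseudo-inverse of $\Psi^{-1}$ coincides with $\Psi$ on $(0,1)$, applying~\eqref{eq:WinvCDF} with $p=1$ to the pair $(m_{\infty},\delta_0)$ (the pseudo-inverse of $\delta_0$ being identically zero on $(0,1)$) yields
\begin{equation*}
  \int_{\R} |x|\,m_{\infty}(\dd x) = \int_0^1 |\Psi(u)|\dd u.
\end{equation*}
Under (E1) the integrand $a/2B$ is nonnegative on $(0,1)$, so $\Psi$ is nondecreasing with $\Psi(1/2)=0$; thus $|\Psi(u)| = \int_u^{1/2} \frac{a(v)}{2B(v)}\dd v$ for $u \in (0,1/2)$ and $|\Psi(u)| = \int_{1/2}^u \frac{a(v)}{2B(v)}\dd v$ for $u \in (1/2,1)$. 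Applying Tonelli's theorem to these two nonnegative double integrals gives
\begin{equation*}
  \int_0^1 |\Psi(u)|\dd u = \int_0^{1/2} \frac{a(v)\,v}{2B(v)}\dd v + \int_{1/2}^1 \frac{a(v)(1-v)}{2B(v)}\dd v,
\end{equation*}
and since $B>0$ on $(0,1)$ under (E1) the right-hand side is precisely the quantity appearing in (E2). This yields the claimed equivalence.

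The argument is essentially computational; the only points requiring a little care are the identification of the pseudo-inverse of $\Psi^{-1}$ with $\Psi$ on $(0,1)$ (including the degenerate situations where $\Psi$ has a finite limit at $0$ or at $1$, which are already handled in the proof of Proposition~\ref{prop:stat}) and the observation that translation neither creates nor destroys integrability, so I expect no genuine obstacle.
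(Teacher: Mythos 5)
Your proposal is correct and follows essentially the same route as the paper: reduce by Proposition~\ref{prop:stat} and translation invariance to the reference solution $\Psi^{-1}$, identify its first absolute moment with $\int_0^1 |\Psi(u)|\,\dd u$, and evaluate that integral by Fubini--Tonelli to recover exactly the quantity in (E2). The only cosmetic difference is that you phrase the identity $\int_{\R}|x|\,m_\infty(\dd x)=\int_0^1|\Psi(u)|\,\dd u$ via $W_1(m_\infty,\delta_0)$ and~\eqref{eq:WinvCDF}, whereas the paper writes it directly as $\int_{1/2}^1\Psi - \int_0^{1/2}\Psi$; this is the same computation.
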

\begin{proof}
  According to Proposition~\ref{prop:stat}, it is enough to prove the statement for the solution $\Psi^{-1}$, the first order moment of which is given by
  \begin{equation*}
    \int_{1/2}^1 \Psi(u)\dd u - \int_0^{1/2} \Psi(u)\dd u = \int_0^{1/2} \frac{a(u) u}{2B(u)}\dd u + \int_{1/2}^1 \frac{a(u)(1-u)}{2B(u)}\dd u
  \end{equation*}
  by the Fubini-Tonelli theorem. The finiteness of the right-hand side is the condition (E2).
\end{proof}

\begin{rk}\label{rk:poincare}
  In the case of the viscous conservation law and under the condition (E1), it is sufficient that $b(0)>0$ and $b(1)<0$ for (E2) to hold, and in this case the probability distributions derivated from the stationary solutions have exponential tails and satisfy a Poincaré inequality (see~\cite[Lemma~2.1]{jm}). In the general case of the stationary equation~\eqref{eq:stat}, and under the equilibrium condition (E1), these results extend as follows.
  \begin{itemize}
    \item Under the uniform ellipticity condition (D3), if $b(0)>0$ and $b(1)<0$ then it is clear from the expression of $\Psi$ that the stationary solutions still have exponential tails, and they consequently satisfy a Poincaré inequality. If $b(0)=0$ (resp. $b(1)=0$), then the left (resp. the right) tail is heavier than exponential.
    
    \item Under the nondegeneracy condition (D1), as soon as the cumulative distribution function $\Psi^{-1}$ admits a positive density $p$, then it satisfies a Poincaré inequality if and only if it satisfies the Hardy criterion (see~\cite[Theorem~6.2.2, p.~99]{logsob}), namely
  \begin{equation*}
    \sup_{x \geq 0} \int_x^{+\infty} p(y)\dd y \int_0^x \frac{\dd y}{p(y)} < +\infty \text{,}\quad \sup_{x \leq 0} \int_{-\infty}^x p(y)\dd y \int_x^0 \frac{\dd y}{p(y)} < +\infty .
  \end{equation*}
  Letting $y = \Psi(v)$, we rewrite 
  \begin{equation*}
    \int_0^x \frac{\dd y}{p(y)} = \int_{1/2}^{\Psi^{-1}(x)}(\Psi'(v))^2\dd v,
  \end{equation*}
  so that that the stationary solutions satisfy a Poincaré inequality if and only if
  \begin{equation*}
    \sup_{u \geq 1/2} (1-u)\int_{1/2}^u \left(\frac{a(v)}{2B(v)}\right)^2\dd v < +\infty \text{,}\quad \sup_{u \leq 1/2} u\int_u^{1/2} \left(\frac{a(v)}{2B(v)}\right)^2\dd v < +\infty.
  \end{equation*}
  \end{itemize}
\end{rk}

\subsection{Convergence in Wasserstein distance}\label{ss:equi} We now state the main result of the article, namely the convergence to equilibrium of the probabilistic solutions in Wasserstein distance.

\begin{theo}\label{theo:equi}
  Let us assume that:
  \begin{itemize}
    \item the coefficients of the Cauchy problem~\eqref{eq:myPDE} satisfy the uniform ellipticity condition (D3), the regularity condition (R2) and the equlibrium conditions (E1) and (E2);
    \item the probability distribution $m$ has a finite first order moment;
    \item $W_2(H*m,\Psi^{-1}) < +\infty$.
  \end{itemize}
  Let $F$ be the probabilistic solution the Cauchy problem~\eqref{eq:myPDE} with initial condition $H*m$. Then there exists a unique stationary solution $F_{\infty}$ such that $F_0$ and $F_{\infty}$ have the same expectation, and for all $p \geq 2$ such that $W_p(H*m,\Psi^{-1}) < +\infty$,
  \begin{equation*}
    \forall 1 \leq q < p, \qquad \lim_{t\to+\infty} W_q(F_t, F_{\infty}) = 0.
  \end{equation*}
\end{theo}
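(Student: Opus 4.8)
The plan is to run a LaSalle‑type argument: the contraction of Proposition~\ref{prop:contr} makes $t\mapsto W_q(F_t,F_\infty)$ monotone, a compactness extraction produces a limiting profile $F_*$, the dissipation identity of Proposition~\ref{prop:quant} forces $F_*$ to be a translate of $\Psi^{-1}$, and conservation of the mean then pins $F_*$ down to $F_\infty$. \emph{Existence and uniqueness of $F_\infty$.} By Proposition~\ref{prop:stat} and (E1) the stationary solutions are exactly the translates $x\mapsto\Psi^{-1}(x+\bar x)$, $\bar x\in\R$, and by Corollary~\ref{cor:integr} condition (E2) makes their first moment finite. The expectation of the $\bar x$‑translate depends continuously and strictly decreasingly on $\bar x$ (by (D2), $\Psi^{-1}$ is genuinely increasing) and ranges over all of $\R$, so there is a unique $\bar x$ for which the corresponding $F_\infty$ has the same expectation as $F_0=H*m$; moreover $W_p(F_0,F_\infty)\le W_p(H*m,\Psi^{-1})+|\bar x|<+\infty$ since translates lie at finite $W_p$‑distance.

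\emph{Monotonicity, conservation of the mean, extraction.} By Proposition~\ref{prop:contr} (using (D3)$\Rightarrow$(D1) and finiteness of first moments), $t\mapsto W_q(F_t,F_\infty)$ is finite and nonincreasing for every $q\in[1,p]$, hence converges to some $\ell_q\ge0$; it therefore suffices to exhibit $t_n\to+\infty$ along which $F_{t_n}\to F_\infty$ in $W_q$ for some $q<p$. Next, under (D3) Corollary~\ref{cor:nlmp} provides the nonlinear diffusion $X$ with $X_t\sim P(t)$; since $\sigma,b$ are bounded and, for a.e.\ $s$, $F_s$ has no atom (so $F_s(X_s)$ is uniform on $[0,1]$ and $\Exp[b(F_s(X_s))]=B(1)=0$ by (E1)), one gets $\Exp[X_t]=\Exp[X_0]$ for all $t$: the mean is conserved along the flow. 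Because $W_1(F_t,F_\infty)\le W_p(F_0,F_\infty)$ and $F_\infty$ has a finite first moment, $\sup_t\int_\R|x|\,\dd F_t(x)<+\infty$, so $\{F_t\}_{t\ge0}$ is tight; along any $t_n\to+\infty$ extract a subsequence with $F_{t_n}\to F_*$ weakly, $F_*$ a cumulative distribution function. The uniform bound $\int_0^1|F_{t_n}^{-1}-F_\infty^{-1}|^p\,\dd u\le W_p^p(F_0,F_\infty)$ makes $\{|F_{t_n}^{-1}-F_\infty^{-1}|^q\}_n$ uniformly integrable on $(0,1)$ for each $q<p$, and since $F_{t_n}^{-1}\to F_*^{-1}$ $\dd u$‑a.e., this upgrades to $W_q(F_{t_n},F_*)\to0$ for all $q<p$; in particular $\Exp[F_*]=\lim_n\Exp[F_{t_n}]=\Exp[F_0]=\Exp[F_\infty]$.

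\emph{Identification $F_*=F_\infty$ (the regular case).} Assume first $H*m\in H^l(\R)$ with $l=3+\beta$, so that by Lemma~\ref{lem:regul} $F$ has the classical regularity of Lemma~\ref{lem:EDPF-1}; $F_\infty$ is $C^2$ (Corollary~\ref{cor:regul}) with bounded space derivative since $2B/a\to0$ at $0$ and $1$ under (D3), hence also meets those conditions, and the tail hypothesis $|x|^{p-1}(\ta(F_0,x)+\ta(F_\infty,x))\to0$ follows from the finiteness of $W_p(F_0,F_\infty)$ and the decay of $F_\infty$ (Aronson's bound, Lemma~\ref{lem:aronson}, giving extra decay for $F_t$, $t>0$, when needed). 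Proposition~\ref{prop:quant} then gives, for $0<t_1<t_2$, $W_p^p(F_{t_1},F_\infty)-W_p^p(F_{t_2},F_\infty)=\tfrac{p(p-1)}{2}\int_{t_1}^{t_2}D(t)\,\dd t$ with $D(t)=\int_0^1 a(u)|F_t^{-1}-F_\infty^{-1}|^{p-2}(\partial_u F_t^{-1}-\partial_u F_\infty^{-1})^2/(\partial_u F_t^{-1}\,\partial_u F_\infty^{-1})\,\dd u\ge0$; as the left side is bounded, $\int_1^{+\infty}D(t)\,\dd t<+\infty$ and one may arrange $D(t_n)\to0$. Using interior parabolic estimates for $F$ on $[t_n,t_n+1]\times\R$ (uniform in $n$ by (D3), (R2), $\|F\|_\infty\le1$) together with Aronson's bounds, $F_{t_n+\cdot}$ converges in $C^{1,2}_{\mathrm{loc}}((0,+\infty)\times\R)$ to the probabilistic solution issued from $F_*$; passing to the limit in $D$ yields $\int_0^1 a(u)|F_*^{-1}-F_\infty^{-1}|^{p-2}(\partial_u F_*^{-1}-\partial_u F_\infty^{-1})^2/(\partial_u F_*^{-1}\,\partial_u F_\infty^{-1})\,\dd u=0$, and since $a\ge\ua>0$ this forces $\partial_u F_*^{-1}=\partial_u F_\infty^{-1}$ a.e., so $F_*^{-1}-F_\infty^{-1}$ is constant on $(0,1)$; equality of means gives $F_*=F_\infty$.

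\emph{Removal of the regularity hypothesis and conclusion.} For general $m$, mollify: $m_k:=m*\rho_{1/k}$ with $\rho$ a smooth compactly supported probability density, so $H*m_k\in H^l(\R)$ and $W_p(H*m_k,H*m)\le k^{-1}\Exp[|Z|^p]^{1/p}\to0$; all hypotheses of the theorem hold for $m_k$, and by the previous step $W_q(F^k_t,F^k_\infty)\to0$ for $q<p$, where $F^k_\infty$ is the stationary solution with the mean of $m_k$, so $W_q(F^k_\infty,F_\infty)=|\bar x_k-\bar x|\to0$. By Proposition~\ref{prop:contr}, $\sup_{t\ge0}W_q(F^k_t,F_t)\le W_q(H*m_k,H*m)$, whence $\limsup_{t\to+\infty}W_q(F_t,F_\infty)\le W_q(H*m_k,H*m)+|\bar x_k-\bar x|\to0$ as $k\to+\infty$. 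Thus every subsequence of $(F_t)$ has a sub‑subsequence converging to $F_\infty$ in $W_q$ for each $q<p$, and by the monotonicity of $t\mapsto W_q(F_t,F_\infty)$ we get $\lim_{t\to+\infty}W_q(F_t,F_\infty)=0$ for all $1\le q<p$. The main obstacle is the identification step: obtaining enough time‑uniform regularity to pass to the limit inside the dissipation $D$ — whose integrand involves the pseudo‑inverse derivatives, uncontrolled pointwise without parabolic smoothing — together with the subsidiary points of verifying the tail hypothesis of Proposition~\ref{prop:quant} and the uniform‑in‑$n$ interior estimates on the shifted time windows, and keeping all Wasserstein bounds uniform in time through the approximation in $m$.
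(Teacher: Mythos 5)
Your overall architecture is close to the paper's — mollify the initial datum, apply Proposition~\ref{prop:quant} to extract a sequence of times along which the dissipation vanishes, identify the limiting profile as $F_\infty$ via conservation of the mean, and remove the regularity hypothesis by letting the mollification parameter tend to $0$. But there is a genuine gap in the identification step, and you yourself flag it as ``the main obstacle.''

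\textbf{The identification step.} You want to pass to the limit in the dissipation along $t_n$ using interior parabolic regularity for $F_{t_n+\cdot}$ (uniform in $n$) and Aronson bounds, so as to obtain a pointwise limit of $\partial_u(F_{t_n})^{-1}$ and then apply Fatou. This is a delicate and unverified argument: you must show that the shifted flows converge in $C^{1,2}_{\mathrm{loc}}$, that the pseudo-inverse derivatives converge pointwise on $(0,1)$, and that no mass escapes near $u\to0,1$ when integrating $D(t_n)$ — none of this is established. The paper avoids all of it. In Step~4 of its proof it applies Proposition~\ref{prop:quant} with $p=2$ (not general $p$), and then observes via Cauchy--Schwarz that
\begin{equation*}
  \int_{\epsilon}^{1-\epsilon} |\partial_u (F_{t_n}^{\alpha})^{-1} - \partial_u F_{\infty}^{-1}|\,\dd v \le \left(\sup_{v\in[\epsilon,1-\epsilon]}\frac{a(v)}{2B(v)}\cdot\int_{\epsilon}^{1-\epsilon}\partial_u (F_{t_n}^{\alpha})^{-1}\,\dd v\cdot\int_{\epsilon}^{1-\epsilon}\frac{(\partial_u (F_{t_n}^{\alpha})^{-1} - \partial_u F_{\infty}^{-1})^2}{\partial_u (F_{t_n}^{\alpha})^{-1}\partial_u F_{\infty}^{-1}}\,\dd v\right)^{1/2}.
\end{equation*}
The first factor is controlled by (E1) because $\partial_u F_\infty^{-1}(v)=a(v)/(2B(v))$ is locally bounded on $(0,1)$; the second is bounded uniformly in $n$ by the contraction property (Proposition~\ref{prop:contr}); the third is precisely the dissipation, which vanishes along $t_n$. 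This gives $L^1_{\mathrm{loc}}$ convergence of $\partial_u (F_{t_n}^{\alpha})^{-1}$ directly, with no parabolic regularity required on the shifted flow. This is an elementary estimate that entirely replaces the compactness-in-regularity argument you sketch.

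\textbf{The tail hypothesis and the choice of $p$.} By invoking Proposition~\ref{prop:quant} with general $p$ you need the tail condition $|x|^{p-1}(\ta(F_0,x)+\ta(F_\infty,x))\to0$. For $p>2$ this requires faster decay of $F_\infty$ than what (E2) provides (which only guarantees a finite first moment), so in general this assumption can fail. The paper only invokes Proposition~\ref{prop:quant} with $p=2$, for which the needed decay $|x|(\ta(F_0^{\alpha},x)+\ta(F_\infty,x))\to0$ follows from the finite first moments; the convergence in $W_q$ for all $q<p$ is recovered afterwards (Step~6) purely from uniform integrability of $|(F_{t_n}^{\alpha})^{-1} - F_\infty^{-1}|^q$ (which follows from $W_p(F_0^\alpha,F_\infty)<+\infty$) together with the monotonicity of $t\mapsto W_q(F_t^\alpha,F_\infty)$, not from another application of the dissipation identity. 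If you apply the dissipation identity with general $p$ you also retain the factor $|F_*^{-1}-F_\infty^{-1}|^{p-2}$, which makes the conclusion $\partial_u F_*^{-1}=\partial_u F_\infty^{-1}$ valid only where $F_*^{-1}\neq F_\infty^{-1}$ (you can still conclude via a connectedness argument, but the $p=2$ case is cleaner and sufficient).

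In short: your plan is right, but the central passage-to-the-limit in the dissipation is the part that needs a real argument, and the one you outline is not complete. Replace it with the Cauchy--Schwarz estimate above, apply Proposition~\ref{prop:quant} with $p=2$ only, and recover $q<p$ from uniform integrability plus monotonicity, and the proof closes.
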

\begin{proof}
  The proof is in 6 steps. 
  
  \sk
  {\bf Step 1.} We first prove the existence and uniqueness of a stationary solution $F_{\infty}$ such that $F_0$ and $F_{\infty}$ have the same expectation. By the condition (E2), the integral $\int_0^1 \Psi(u)\dd u$ is defined. Owing to the condition (E1) and according to Proposition~\ref{prop:stat}, the stationary solutions are the functions of the form $F_{\infty}(x) = \Psi^{-1}(x+\bar{x})$, $\bar{x}\in\R$. By Corollary~\ref{cor:integr}, the expectation of such a cumulative distribution function exists and it is given by
  \begin{equation*}
    \int_0^1 F_{\infty}^{-1}(u) \dd u = \int_0^1 \Psi(u)\dd u - \bar{x}.
  \end{equation*}
  Thus, the unique stationary solution with the same expectation as $F_0$ is given by $F_{\infty}(x) = \Psi^{-1}(x+\bar{x})$, with $\bar{x} = \int_0^1(\Psi(u) - F_0^{-1}(u))\dd u$.   
  
  \sk
  {\bf Step 2.} We now introduce a smooth approximation of the initial condition $F_0$ in order to use Lemma~\ref{lem:regul}. Let $\zeta$ be a $C^{\infty}$ probability density on $\R$ with compact support and such that $\int_{\R} x \zeta(x)\dd x = 0$. For all $\alpha > 0$ and $x \in \R$, we define $\zeta_{\alpha}(x) := \alpha^{-1}\zeta(\alpha^{-1}x)$ and $F_0^{\alpha}(x) := F_0 * \zeta_{\alpha}(x)$. Then $F_0^{\alpha}$ is $C^{\infty}$ on $\R$ and it is the cumulative distribution function of $X_0 + \alpha Z$, where $X_0$ has distribution $m$, $Z$ has density $\zeta$ and $X_0$ and $Z$ are independent. As a consequence, $F_0^{\alpha}$ has a finite first order moment and it has the same expectation as $F_0$ and $F_{\infty}$ due to Step~1. By Lemma~\ref{lem:regul}, the probabilistic solution $F^{\alpha}$ to the Cauchy problem~\eqref{eq:myPDE} with initial condition $F_0^{\alpha}$ has the classical regularity and for all finite $T>0$, it belongs to $C^{1,2}_{\Lip}([0,T]\times\R)$.
  
  Using the obvious coupling $(X_0, X_0+\alpha Z)$ of $F_0$ and $F_0^{\alpha}$, we note that for all $q \geq 1$, $W_q(F_0, F_0^{\alpha}) \leq \alpha \Exp(|Z|^q)^{1/q} < +\infty$. This leads to the following remarks:
  \begin{itemize}
    \item As $F_{\infty}$ is a translation of $\Psi^{-1}$, by the triangle inequality, $W_q(H*m,\Psi^{-1})$ and $W_q(F_0^{\alpha},F_{\infty})$ are simultaneously finite or infinite. 
    \item Using the triangle inequality again and Proposition~\ref{prop:contr}, we get 
    \begin{equation*}
      W_q(F_t,F_{\infty}) \leq W_q(F_t, F_t^{\alpha}) + W_q(F_t^{\alpha},F_{\infty}) \leq \alpha \Exp(|Z|^q)^{1/q} + W_q(F_t^{\alpha},F_{\infty}).
    \end{equation*}
     Hence, as soon as, for all $\alpha > 0$, $\limsup_{t \to +\infty} W_q(F_t^{\alpha}, F_{\infty}) = 0$, then taking $\alpha$ arbitrarily small yields $\lim_{t \to +\infty} W_q(F_t, F_{\infty}) = 0$.
  \end{itemize}
  
  We now fix $\alpha > 0$. The remaining steps are dedicated to the proof of the fact that, for all $p \geq 2$ such that $W_p(H*m, \Psi^{-1}) < +\infty$, for all $1 \leq q < p$, $\lim_{t \to +\infty} W_q(F_t^{\alpha}, F_{\infty}) = 0$.
  
  \sk
  {\bf Step 3.} We prove that for all $t \geq 0$, the expectation of $F_t^{\alpha}$ remains constant. By Corollary~\ref{cor:nlmp}, for all $t \geq 0$, $F_t^{\alpha}$ is the marginal cumulative distribution function of the nonlinear diffusion process $X^{\alpha}$ solution to~\eqref{eq:nldp} with initial condition having cumulative distribution function $F_0^{\alpha}$. Since $\sigma$ and $b$ are bounded,
  \begin{equation*}
    \forall t \geq 0, \qquad \Exp(X_t^{\alpha}) = \Exp(X_0^{\alpha}) + \int_0^t \Exp[b(F_s^{\alpha}(X_s^{\alpha}))]\dd s.
  \end{equation*}
  But for all $s>0$, $F_s^{\alpha}$ is continuous so that $\Exp[b(F_s^{\alpha}(X_s^{\alpha}))] = \int_0^1 b(u)\dd u = B(1)$. By (E1), we conclude that $\Exp(X_t^{\alpha})=\Exp(X_0^{\alpha})$.
  
  \sk
  {\bf Step 4.} We now describe the evolution of the Wasserstein distance $W_2(F_t^{\alpha},F_{\infty})$. We are willing to use Proposition~\ref{prop:quant}, therefore we need to check that $F^{\alpha}$ and $F_{\infty}$ satisfy the assumptions of Lemma~\ref{lem:EDPF-1}. It is the case for $F^{\alpha}$ thanks to Lemma~\ref{lem:regul}. The stationary solution $F_{\infty}$ has a finite first order moment owing to the condition (E2) and Corollary~\ref{cor:integr}, it is $C^2$ on $\R$ by the condition (R2) and Corollary~\ref{cor:regul}, and from the definition of $\Psi^{-1}$ and condition (D3) it follows that the derivative of $F_{\infty}$ is bounded by $2||B||_{\infty}/\ua$.
  
  Moreover, by the assumption that $W_2(H*m,\Psi^{-1}) < +\infty$ and Step~2, $W_2(F_0^{\alpha}, F_{\infty}) < +\infty$; and since both $F_0^{\alpha}$ and $F_{\infty}$ have a finite first order moment, $|x|(\ta(F_0^{\alpha},x) + \ta(F_{\infty},x))$ vanishes when $x \to \pm\infty$. Thus, Proposition~\ref{prop:quant} applies to $F^{\alpha}$ and $F_{\infty}$ with $p=2$ and yields, for all $0 < t_1 < t_2$,
  \begin{equation*}
    W_2^2(F_{t_2}^{\alpha},F_{\infty}) - W_2^2(F_{t_1}^{\alpha},F_{\infty}) = - \int_{t_1}^{t_2} \int_0^1 a(u)\frac{\left(\partial_u (F_t^{\alpha})^{-1}(u) - \partial_u F_{\infty}^{-1}(u)\right)^2}{\partial_u (F_t^{\alpha})^{-1}(u)\partial_u F_{\infty}^{-1}(u)}\dd u \dd t \leq 0.
  \end{equation*} 
  
  Using the uniform ellipticity condition (D3), we can then assert that
  \begin{equation*}
    \liminf_{t \to +\infty} \int_0^1 \frac{\left(\partial_u (F_t^{\alpha})^{-1}(u) - \partial_u F_{\infty}^{-1}(u)\right)^2}{\partial_u (F_t^{\alpha})^{-1}(u)\partial_u F_{\infty}^{-1}(u)}\dd u = 0,
  \end{equation*}
  and extract a sequence $(t_n)_{n \geq 1}$ growing to $+\infty$ such that the integral above goes to $0$ along $(t_n)_{n \geq 1}$. Let us prove that for all $u \in (0,1)$,
  \begin{equation}\label{eq:step3}
    \lim_{n \to +\infty} \int_{1/2}^u |\partial_u (F_{t_n}^{\alpha})^{-1}(v) - \partial_u F_{\infty}^{-1}(v)|\dd v = 0.
  \end{equation}
  Let $0 < \epsilon < 1/2$. By the Cauchy-Schwarz inequality and the condition (E1),
  \begin{equation}\label{eq:step3:2}
    \begin{split}
      & \int_{\epsilon}^{1-\epsilon} |\partial_u (F_{t_n}^{\alpha})^{-1} - \partial_u F_{\infty}^{-1}|\dd v \leq \left(\int_{\epsilon}^{1-\epsilon}\partial_u (F_{t_n}^{\alpha})^{-1}\partial_u F_{\infty}^{-1}\dd v\int_{\epsilon}^{1-\epsilon}\frac{\left(\partial_u (F_{t_n}^{\alpha})^{-1} - \partial_u F_{\infty}^{-1}\right)^2}{\partial_u (F_{t_n}^{\alpha})^{-1}\partial_u F_{\infty}^{-1}}\dd v\right)^{1/2}\\
      & \qquad \leq \left(\sup_{v \in [\epsilon, 1-\epsilon]} \frac{a(v)}{2B(v)}\int_{\epsilon}^{1-\epsilon}\partial_u (F_{t_n}^{\alpha})^{-1}\dd v\int_{\epsilon}^{1-\epsilon}\frac{\left(\partial_u (F_{t_n}^{\alpha})^{-1} - \partial_u F_{\infty}^{-1}\right)^2}{\partial_u (F_{t_n}^{\alpha})^{-1}\partial_u F_{\infty}^{-1}}\dd v\right)^{1/2}.
    \end{split}
  \end{equation}
  The first integral can be bounded uniformly in $n$ as follows:
  \begin{equation*}
    \begin{split}
      & \int_{\epsilon}^{1-\epsilon}\partial_u (F_{t_n}^{\alpha})^{-1}(v)\dd v = (F_{t_n}^{\alpha})^{-1}(1-\epsilon) - (F_{t_n}^{\alpha})^{-1}(\epsilon)\\
      & \qquad \leq \frac{2}{\epsilon} \int_0^{\epsilon/2}\left((F_{t_n}^{\alpha})^{-1}(1-\epsilon+v) - (F_{t_n}^{\alpha})^{-1}(\epsilon-v)\right)\dd v\\
      & \qquad \leq \frac{2}{\epsilon} \left(\int_0^{\epsilon/2}\left(F_{\infty}^{-1}(1-\epsilon+v) - F_{\infty}^{-1}(\epsilon-v)\right)\dd v + \int_0^1 |(F_{t_n}^{\alpha})^{-1}(v) - F_{\infty}^{-1}(v)|\dd v\right)\\
      & \qquad \leq \frac{2}{\epsilon} \left(\int_0^{\epsilon/2}\left(F_{\infty}^{-1}(1-\epsilon+v) - F_{\infty}^{-1}(\epsilon-v)\right)\dd v + \int_0^1 |(F_0^{\alpha})^{-1}(v) - F_{\infty}^{-1}(v)|\dd v\right),
    \end{split}
  \end{equation*}
  where the last inequality is due to Proposition~\ref{prop:contr}. We deduce that the right-hand side of~\eqref{eq:step3:2} goes to $0$, so that taking $\epsilon \leq u \wedge (1-u)$ yields~\eqref{eq:step3}.
  
  \sk
  {\bf Step 5.} We extract a subsequence of $(t_n)_{n \geq 1}$, that we still index by $n$ for convenience, such that $\lim_{n \to +\infty} (F_{t_n}^{\alpha})^{-1}(1/2) - F_{\infty}^{-1}(1/2) = \ell \in [-\infty,+\infty]$. Then using Step~4, for all $u \in (0,1)$ one has $(F_{t_n}^{\alpha})^{-1}(u) - F_{\infty}^{-1}(u) \to \ell$. Besides, since by Proposition~\ref{prop:contr},
  \begin{equation*}
    \sup_{t \geq 0} \int_0^1 |(F_t^{\alpha})^{-1}(u) - F_{\infty}^{-1}(u)|^2\dd u = W_2^2(F_0^{\alpha},F_{\infty}) < +\infty,
  \end{equation*}
  then the functions $(u \mapsto (F_{t_n}^{\alpha})^{-1}(u) - F_{\infty}^{-1}(u))_{n \geq 1}$ are uniformly integrable. We deduce using Step~3 that
  \begin{equation*}
    \ell = \lim_{n \to +\infty} \int_0^1 \left((F_{t_n}^{\alpha})^{-1}(u) - F_{\infty}^{-1}(u)\right) \dd u = 0.
  \end{equation*}
  
  \sk
  {\bf Step 6.} Let $p \geq 2$ such that $W_p(F_0^{\alpha},\Psi^{-1}) < +\infty$. Then by Step~2, $W_p(F_0^{\alpha},F_{\infty})<+\infty$; therefore, for all $1 \leq q < p$, the functions $(u \mapsto |(F_{t_n}^{\alpha})^{-1}(u) - F_{\infty}^{-1}(u)|^q)_{n \geq 1}$ are uniformly integrable, and using Step~5 we have
  \begin{equation*}
    \lim_{n \to +\infty} \int_0^1 |(F_{t_n}^{\alpha})^{-1}(u) - F_{\infty}^{-1}(u)|^q \dd u = 0.
  \end{equation*}
  But according to Proposition~\ref{prop:contr}, the flow $t \mapsto W_q(F_t^{\alpha},F_{\infty})$ is nonincreasing. As a consequence $\lim_{t \to +\infty} W_q(F_t^{\alpha},F_{\infty}) = 0$ and the proof is completed by virtue of Step~2.
\end{proof}

\subsection{Rate of convergence}\label{ss:rate} We first recall the result of convergence to equilibrium stated in~\cite{jm}, where $A(u) = \sigma^2 u$ with $\sigma^2 > 0$. Then it is easily checked that the conditions (R1), (E1) and (E2) are satisfied if  $B$ is $C^2$ on $[0,1]$, with $B(1)=0$, $b(0) > 0$, $b(1) < 0$ and $B(u) > 0$ on $(0,1)$. Then according to Remark~\ref{rk:poincare}, all the stationary solutions $F_{\infty}$ admit a positive density $p_{\infty}$ and satisfy a Poincaré inequality. Under these assumptions, we have the following convergence result.

\begin{lem}\cite[Lemma~2.8]{jm}\label{lem:jm2.8}
  There exist $\eta > 0$ and $c > 0$ depending on $A$ and $B$ such that for all cumulative distribution function $F_0$ with a finite first order moment, calling $F_{\infty}$ the sationary solution with the same expectation as $F_0$, as soon as $\int (F_0-F_{\infty})^2/p_{\infty} \dd x \leq \eta$ then
  \begin{equation*}
    \forall t \geq 0, \qquad \int_{\R} \frac{(F_t(x)-F_{\infty}(x))^2}{p_{\infty}(x)} \dd x \leq \frac{\exp(-ct)}{c} \int_{\R} \frac{(F_0(x)-F_{\infty}(x))^2}{p_{\infty}(x)} \dd x.
  \end{equation*}
\end{lem}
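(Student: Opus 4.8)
The plan is to run a linearised $\chi^2$-type energy estimate for the difference $G_t := F_t - F_\infty$, built around the Poincar\'e inequality satisfied by the stationary density. I would first record the relevant structure. Since $A(u)=\sigma^2 u$ is linear, subtracting the stationary equation from~\eqref{eq:myPDE} gives $\partial_t G = \tfrac{\sigma^2}{2}\partial_x^2 G - \partial_x(\tilde b_t G)$, where $\tilde b_t(x) := \int_0^1 b(F_\infty(x)+sG_t(x))\dd s$ is bounded and, $b$ being $C^1$, satisfies $|\tilde b_t - b(F_\infty)| \le \tfrac12\|b'\|_\infty |G_t|$. By Proposition~\ref{prop:stat} specialised to $A(u)=\sigma^2u$, the stationary density is $p_\infty = \tfrac{2}{\sigma^2}B(F_\infty)$, a bounded, strictly positive function with $\tfrac{\sigma^2}{2}(\log p_\infty)' = b(F_\infty)$; in particular it is the invariant density of the diffusion with generator $\tfrac{\sigma^2}{2}\partial_x^2 + b(F_\infty)\partial_x$, and by Remark~\ref{rk:poincare} it satisfies a Poincar\'e inequality: for some $C_P>0$, $\int_\R v^2 p_\infty\dd x \le C_P\int_\R (v')^2 p_\infty\dd x$ whenever $\int_\R v p_\infty\dd x = 0$. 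Finally, since $F$ has the classical regularity for the viscous conservation law, since $F_0$ and $F_\infty$ have the same expectation, and since the expectation is conserved along the flow (as in Step~3 of the proof of Theorem~\ref{theo:equi}), one has $\int_\R G_t(x)\dd x = 0$ for every $t\ge 0$.

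Next I would differentiate the functional $\mathcal E(t):=\int_\R G_t(x)^2/p_\infty(x)\,\dd x$. Writing $u_t := G_t/p_\infty$, so that $\int_\R u_t p_\infty\dd x = \int_\R G_t\dd x = 0$ and $\mathcal E(t) = \int_\R u_t^2 p_\infty\dd x$, using the equation for $G$ and integrating by parts (the boundary terms at $\pm\infty$ vanishing because $\mathcal E(t)<\infty$), the contribution of the ``frozen'' drift $b(F_\infty)$ recombines exactly — here the identity $b(F_\infty)=\tfrac{\sigma^2}{2}(\log p_\infty)'$ is used — into $-\sigma^2\int_\R (u_t')^2 p_\infty\dd x$, leaving
\begin{equation*}
  \frac{\dd}{\dd t}\mathcal E(t) = -\sigma^2\int_\R (u_t'(x))^2 p_\infty(x)\dd x + \mathcal R(t), \qquad \mathcal R(t) = 2\int_\R r_t p_\infty u_t u_t'\dd x,
\end{equation*}
with $r_t := \tilde b_t - b(F_\infty)$, whence $|\mathcal R(t)| \le \|b'\|_\infty\int_\R p_\infty^2 u_t^2 |u_t'|\dd x$. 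Absent $\mathcal R$, the Poincar\'e inequality immediately gives $\tfrac{\dd}{\dd t}\mathcal E \le -\tfrac{\sigma^2}{C_P}\mathcal E$, the linear rate.

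The heart of the argument is to absorb $\mathcal R(t)$ in the regime where $\mathcal E$ is small. By Cauchy--Schwarz, $|\mathcal R(t)| \le \|b'\|_\infty(\int p_\infty (u_t')^2)^{1/2}(\int p_\infty^3 u_t^4)^{1/2}$ and $\int p_\infty^3 u_t^4 \le \|G_t/\sqrt{p_\infty}\|_\infty^2\,\mathcal E(t)$; and since $(\log p_\infty)'$ is bounded, a one-dimensional Sobolev embedding controls $\|G_t/\sqrt{p_\infty}\|_\infty^2 \le C(\mathcal E(t) + \int (u_t')^2 p_\infty)$. Combining these with Young's inequality, one gets, for a constant $K$ depending only on $A$ and $B$,
\begin{equation*}
  |\mathcal R(t)| \le \frac{\sigma^2}{2}\int_\R (u_t')^2 p_\infty\,\dd x + K\big(\mathcal E(t)^{1/2} + \mathcal E(t)\big)\int_\R (u_t')^2 p_\infty\,\dd x + K\,\mathcal E(t).
\end{equation*}
Hence there is $\eta>0$ such that, as long as $\mathcal E(t) \le \eta$, the first two terms are absorbed into the linear dissipation and, invoking the Poincar\'e inequality, $\tfrac{\dd}{\dd t}\mathcal E(t) \le -2c\,\mathcal E(t)$ for some $c>0$. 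A continuation argument then closes the estimate: if $\mathcal E(0)\le\eta$, then on the maximal interval where $\mathcal E\le\eta$ the inequality forces $\mathcal E(t) \le \e^{-2ct}\mathcal E(0) \le \eta$, so that interval is all of $[0,+\infty)$, and $\mathcal E(t) \le \e^{-2ct}\mathcal E(0)$ for every $t\ge0$; after decreasing $c$ so that $c\le1$, the stated form with the crude prefactor $1/c\ge1$ follows.

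The step I expect to be the main obstacle is a rigorous justification of these manipulations where $1/p_\infty$ is large: one must propagate in time not only $\mathcal E(t)<\infty$ but also enough decay and integrability of $G_t$ and of its spatial derivatives for the integrations by parts and the Sobolev estimate to be licit — this is where the parabolic regularity available for the viscous conservation law and the explicit form of $p_\infty$ are needed — and, relatedly, one must check that the bootstrap genuinely closes, i.e. that $\mathcal E(0)\le\eta$ really keeps $\mathcal R(t)$ strictly dominated by the linear dissipation for all time. For the complete details I would refer to~\cite{jm}.
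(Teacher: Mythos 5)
This lemma is not proved in the paper: it is quoted verbatim (with the citation tag) from Jourdain--Malrieu~\cite[Lemma~2.8]{jm}, and Subsection~\ref{ss:rate} merely recalls it in order to deduce Corollary~\ref{cor:rate}. Your reconstruction follows the same strategy as the proof in~\cite{jm} --- linearize the viscous conservation law around $F_{\infty}$, differentiate the weighted energy $\int (F_t-F_{\infty})^2/p_{\infty}\dd x$, use the identity $\frac{\sigma^2}{2}(\log p_{\infty})' = b(F_{\infty})$ to expose the Dirichlet form and the Poincar\'e inequality of $p_{\infty}$, then absorb the cubic remainder in the small-$\chi^2$ regime via a one-dimensional Sobolev bound and a continuation argument --- so the sketch is correct and consistent with the cited source.
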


According to~\cite[Proposition~1.4]{jourdain:transport}, the quadratic Wasserstein distance between two probability distributions $\mu$ and $\nu$ on $\R$ such that $\mu$ admits a positive density $p$ satisfies the inequality
\begin{equation*}
  W_2^2(\mu,\nu) \leq 4 \int_{\R} \frac{(H*\mu(x) - H*\nu(x))^2}{p(x)} \dd x.
\end{equation*}
Hence, the convergence result of Lemma~\ref{lem:jm2.8} can be translated in terms of the Wasserstein distance.

\begin{cor}\label{cor:rate}
  Under the assumptions of Lemma~\ref{lem:jm2.8}, as soon as $\int (F_0-F_{\infty})^2/p_{\infty} \dd x$ is small enough, then $W_2(F_t, F_{\infty})$ converges to $0$ exponentially fast. 
\end{cor}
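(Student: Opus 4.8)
The plan is to combine Lemma~\ref{lem:jm2.8} with the comparison between the $\chi^2$-type quantity $\int_{\R}(F-F_{\infty})^2/p_{\infty}\,\dd x$ and the quadratic Wasserstein distance recalled just above the statement. First I would observe that, under the assumptions of Lemma~\ref{lem:jm2.8}, we are in the setting of the viscous conservation law with $b(0)>0$, $b(1)<0$ and $B>0$ on $(0,1)$, so by Remark~\ref{rk:poincare} the stationary solution $F_{\infty}$ admits a positive density $p_{\infty}$ on $\R$. In particular the transport inequality of~\cite[Proposition~1.4]{jourdain:transport} may be applied with $\mu$ the probability distribution with density $p_{\infty}$ and $\nu$ the probability distribution with cumulative distribution function $F_t$, which gives
\begin{equation*}
  W_2^2(F_t,F_{\infty}) \leq 4\int_{\R}\frac{(F_t(x)-F_{\infty}(x))^2}{p_{\infty}(x)}\,\dd x.
\end{equation*}

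Next I would simply invoke Lemma~\ref{lem:jm2.8}: since $\int_{\R}(F_0-F_{\infty})^2/p_{\infty}\,\dd x \leq \eta$ by hypothesis (and $F_{\infty}$ is, as required there, the stationary solution with the same expectation as $F_0$), the right-hand side above is bounded by $4c^{-1}\e^{-ct}\int_{\R}(F_0-F_{\infty})^2/p_{\infty}\,\dd x$ for all $t\geq 0$. Taking square roots yields
\begin{equation*}
  W_2(F_t,F_{\infty}) \leq \frac{2}{\sqrt{c}}\,\e^{-ct/2}\left(\int_{\R}\frac{(F_0(x)-F_{\infty}(x))^2}{p_{\infty}(x)}\,\dd x\right)^{1/2},
\end{equation*}
which is exactly the claimed exponential convergence of $W_2(F_t,F_{\infty})$ to $0$.

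There is essentially no serious obstacle: the only points requiring a word of care are that the comparison inequality demands a positive density for one of the two measures, which is why it must be applied with $F_{\infty}$ (whose density is positive by Remark~\ref{rk:poincare}) rather than with $F_t$, and that the smallness condition needed is precisely the one already assumed in Lemma~\ref{lem:jm2.8}, so that no new hypothesis is introduced. The whole argument is therefore a two-line consequence of the two quoted results.
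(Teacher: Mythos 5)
Your argument is exactly the paper's: combine the transport inequality $W_2^2(\mu,\nu)\leq 4\int (H*\mu-H*\nu)^2/p\,\dd x$ of~\cite[Proposition~1.4]{jourdain:transport}, applied with $\mu$ the measure of positive density $p_\infty$, with the exponential decay of $\int (F_t-F_\infty)^2/p_\infty\,\dd x$ from Lemma~\ref{lem:jm2.8}. The paper leaves this two-line deduction implicit in the text preceding the corollary; you have simply spelled it out, correctly.
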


\appendix

\section{Proof of Proposition~\ref{prop:unicite-edp}}\label{app:unicite}

This appendix is dedicated to the proof of the first point of Proposition~\ref{prop:unicite-edp}, which states that there is at most one weak solution to the Cauchy problem~\eqref{eq:myPDE} in the set $\F(T)$, for all $T>0$, possibly $T=+\infty$. The proof is adaptated from Wu, Zhao, Yin and Lin~\cite[Section 3.2]{zhao-livre} as well as Liu and Wang~\cite{liu}, who provide uniqueness of bounded weak solutions to the {\em initial-boundary value} problem, namely the Cauchy problem~\eqref{eq:myPDE} in the strip $[0,T) \times (0,1)$ with boundary conditions at $x=0$ and $x=1$. At an intuitive level, one can see our restriction to the set $\F(T)$ as some boundary conditions at $x=-\infty$ and $x=+\infty$. 

We shall follow the so-called Holmgren's approach which consists in turning the proof of uniqueness for~\eqref{eq:myPDE} into a proof of existence for an {\em adjoint problem}. Recall that we make the following nondegeneracy assumption:
\begin{itemize}
  \item[(D1)] The function $A$ is increasing.
\end{itemize}

Let $T>0$, possibly $T=+\infty$, and let $F^1$, $F^2 \in \F(T)$ such that for all $g \in \Cc([0,T)\times\R)$, both $F^1$ and $F^2$ satisfy~\eqref{eq:weak}. Then, for all $t \in [0,T)$, the function $F^2_t - F^1_t$ is integrable on $\R$ and the function $(s,x) \mapsto F^2_s(x) - F^1_s(x)$ is integrable on $Q_t := (0,t) \times \R$. Therefore, for all $t \in [0,T)$ and for all $g \in \Cc([0, T) \times \R)$,~\eqref{eq:weak} yields
\begin{equation}\label{eq:F2-F1}
  \int_{Q_t} (F^2_s-F^1_s) \left\{\frac{1}{2} \tA\partial^2_x g + \tB\partial_x g + \partial_s g\right\} \dd s\dd x = \int_{\R} (F^2_t(x)-F^1_t(x))g(t,x)\dd x;
\end{equation}
where
\begin{equation*}
  \tA(s,x) = \int_0^1 a\left((1-\theta)F^1_s(x) + \theta F^2_s(x)\right)\dd \theta,
\end{equation*}
and
\begin{equation*}
  \tB(s,x) = \int_0^1 b\left((1-\theta)F^1_s(x) + \theta F^2_s(x)\right)\dd \theta.
\end{equation*}

\begin{rk}\label{rk:C12}
  For all $t \in [0,T)$, by a classical regularization argument the integral equality~\eqref{eq:F2-F1} holds true for all function $g$ in the space $C^{1,2}_{\mathrm{b}}([0,t] \times \R)$ of real-valued $C^{1,2}$ functions bounded together with their derivatives.
\end{rk}

Let $f \in \Cc([0, T) \times \R)$. Then there exists $t\in[0,T)$ such that $\Supp f \subset [0,t) \times \R$. Let us introduce the  {\em adjoint problem} to~\eqref{eq:myPDE} as
\begin{equation}\label{eq:adj}
  \left\{\begin{aligned}
    & \frac{1}{2}\tA\partial^2_x g + \tB\partial_x g + \partial_s g = f &\qquad (s,x) \in [0,t) \times \R,\\
    & g(t,x) = 0 &\qquad x \in \R.
  \end{aligned}\right. 
\end{equation}

The coefficients $\tA$ and $\tB$ may not be smooth enough to allow the adjoint problem to admit classical solutions. Therefore we introduce a suitable approximation of~\eqref{eq:adj}. For small $\delta, \eta > 0$, let 
\begin{equation}\label{eq:FG}
  \begin{split}
    G_{\delta} & := \{(s,x) \in [0,t] \times \R : |F_s^1(x) - F_s^2(x)| < \delta\},\\
    F_{\delta} & := \{(s,x) \in [0,t] \times \R : |F_s^1(x) - F_s^2(x)| \geq \delta\},
  \end{split}
\end{equation}
and let us define
\begin{equation*}
  \lambda_{\eta}^{\delta}(s,x) = \left\{\begin{aligned}
    & 0 & \text{on $G_{\delta}$},\\
    & \left[\frac{1}{2}(\eta+\tA(s,x))\right]^{-1/2}\tB(s,x) & \text{on $F_{\delta}$}.
  \end{aligned}\right. 
\end{equation*}
Since $A$ is increasing and $F^1,F^2$ are bounded, there exist $L(\delta) > 0$ and $K(\delta) > 0$ independent of $\eta$ such that
\begin{equation*}
  \begin{aligned}
    &\tA(s,x) \geq L(\delta) &\qquad (s,x) \in F_{\delta},\\
    &|\lambda_{\eta}^{\delta}(s,x)| \leq K(\delta) &\qquad (s,x) \in [0,t] \times \R.
  \end{aligned}
\end{equation*}

Let $\xi$ be a $C^{\infty}$ probability density on $\R^2$ such that $\Supp \xi \subset [-1, 1] \times [-1,1]$. For all $\epsilon > 0$, let $\xi_{\epsilon} := \epsilon^{-2}\xi(\epsilon^{-1}s,\epsilon^{-1}x)$ and define $\tA_{\epsilon} = \tA * \xi_{\epsilon}$ and $\lambda_{\eta,\epsilon}^{\delta} = \lambda_{\eta}^{\delta} * \xi_{\epsilon}$. Then $\tA_{\epsilon}$ and $\lambda_{\eta,\epsilon}^{\delta}$ are $C^{\infty}$ functions and all their derivatives are bounded on $[0,t] \times \R$. Besides,
\begin{equation}\label{eq:est}
  \begin{aligned}
    &\lim_{\epsilon\to 0}\tA_{\epsilon}(s,x) = \tA(s,x) &\qquad \text{a.e. in $[0,t]\times\R$},\\
    &\lim_{\epsilon\to 0}\lambda_{\eta,\epsilon}^{\delta}(s,x) = \lambda_{\eta}^{\delta}(s,x) &\qquad \text{a.e. in $[0,t]\times\R$},\\
    &\tA_{\epsilon}(s,x) \leq C &\qquad (s,x) \in [0,t]\times\R,\\
    &|\lambda_{\eta,\epsilon}^{\delta}(s,x)| \leq K(\delta) &\qquad (s,x) \in [0,t]\times\R,
  \end{aligned}
\end{equation}
where $C$ refers to a positive constant independent of $\epsilon$, $\delta$ and $\eta$, and $K(\delta)$ refers to a positive constant depending only on $\delta$. In the sequel, the values of $C$ and $K(\delta)$ can change from one line to another.

We finally define
\begin{equation*}
  \tB_{\eta,\epsilon}^{\delta}(s,x) = \lambda_{\eta,\epsilon}^{\delta}(s,x)\left[\frac{1}{2}(\eta+\tA_{\epsilon}(s,x))\right]^{1/2},
\end{equation*}
and emphasize the fact that 
\begin{equation}\label{eq:BK}
  ||\tB_{\eta,\epsilon}^{\delta}||_{\infty} \leq K(\delta).
\end{equation}

We are now able to introduce the {\em approximate adjoint problem}
\begin{equation}\label{eq:appadj}
  \left\{\begin{aligned}
    & \frac{1}{2}(\eta+\tA_{\epsilon})\partial^2_x g + \tB_{\eta,\epsilon}^{\delta}\partial_x g + \partial_s g = f &\qquad (s,x) \in [0,t) \times \R,\\
    & g(t,x) = 0 &\qquad x \in \R.
  \end{aligned}\right. 
\end{equation}
The coefficients of the equation are bounded, globally Lipschitz continuous, the operator is uniformly parabolic, and the right-hand side $f$ is continuous and bounded. Therefore the Cauchy problem~\eqref{eq:appadj} admits a unique classical bounded solution $\g$ (see~\cite[p.~369]{karatzas}). Since the coefficients of the equation and $f$ are $C^{\infty}$ on $[0,t] \times \R$, then so is $\g$ (see~\cite[p.~263]{friedman}). Owing to the Feynman-Kac formula, $\g$ has the following probabilistic representation:
\begin{equation}\label{eq:FK}
  \forall (s,x) \in [0,t)\times\R, \qquad \g(s,x) = -\Exp\left[\int_s^t f(r,Z^{s,x}_r)\dd r\right]
\end{equation}
where, for a given standard Brownian motion $W$, $(Z^{s,x}_r)_{r \in [0,t]}$ is the unique strong solution of the stochastic differential equation
\begin{equation}\label{eq:eds}
  Z^{s,x}_r = x + \int_s^r \tB_{\eta,\epsilon}^{\delta}(u,Z^{s,x}_u)\dd u + \int_s^r (\eta+\tA_{\epsilon}(u,Z^{s,x}_u))^{1/2}\dd W_u.
\end{equation}

\begin{lem}\label{lem:g}
  The functions $\g$, $\partial_x \g$ and $\partial_x^2 \g$ are such that:
  \begin{eqnarray}
    \label{eq:supg}& &\sup_{[0,t] \times \R} |\g(s,x)| \leq C,\\
    \label{eq:intg}& &\sup_{s \in [0,t]} \int_{\R} |\g(s,x)|\dd x \leq K(\delta),\\
    \label{eq:dgL1}& &\sup_{s \in [0,t]} |\partial_x \g(s,x)| \leq \kappa(\epsilon,\delta,\eta)\exp(-x^2/\kappa(\epsilon,\delta,\eta)),\\
    \label{eq:d2gL1}& &\sup_{s \in [0,t]} |\partial_x^2 \g(s,x)| \leq \kappa(\epsilon,\delta,\eta)\exp(-x^2/\kappa(\epsilon,\delta,\eta)),
  \end{eqnarray}
  where the value of $\kappa(\epsilon,\delta,\eta)$ can change from one line to another.
  \end{lem}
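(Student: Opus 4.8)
The plan is to read all four estimates off the Feynman--Kac representation~\eqref{eq:FK}, controlling the diffusion $Z^{s,x}$ of~\eqref{eq:eds} by elementary martingale inequalities, and then to obtain the bounds on the derivatives by differentiating the approximate adjoint equation~\eqref{eq:appadj}. Fix $R>0$ with $\Supp f\subset[0,t)\times[-R,R]$. In~\eqref{eq:eds} the drift is bounded by $\|\tB_{\eta,\epsilon}^{\delta}\|_{\infty}\le K(\delta)$ by~\eqref{eq:BK}, uniformly in $\epsilon$ and $\eta$, while the diffusion coefficient $(\eta+\tA_{\epsilon})^{1/2}$ is bounded by $\Lambda^{1/2}:=(1+C)^{1/2}$ for $\eta\le 1$ by~\eqref{eq:est}. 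Hence the martingale part $M^{s,x}_r:=\int_s^r(\eta+\tA_{\epsilon}(u,Z^{s,x}_u))^{1/2}\dd W_u$ satisfies $\langle M^{s,x}\rangle_r\le\Lambda t$ for all $s,x,\epsilon,\delta,\eta$, and $|Z^{s,x}_r-x|\le K(\delta)t+|M^{s,x}_r|$.

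Estimate~\eqref{eq:supg} is immediate from~\eqref{eq:FK}: $|\g(s,x)|\le t\|f\|_{\infty}=:C$. For~\eqref{eq:intg}, bound $|f(r,y)|\le\|f\|_{\infty}\ind{|y|\le R}$ in~\eqref{eq:FK} and use Fubini to get $\int_{\R}|\g(s,x)|\dd x\le\|f\|_{\infty}\int_s^t\int_{\R}\Pr(|Z^{s,x}_r|\le R)\dd x\dd r$. The event $\{|Z^{s,x}_r|\le R\}$ forces $|M^{s,x}_r|\ge|x|-R-K(\delta)t$, which for $|x|\ge 2(R+K(\delta)t)$ exceeds $|x|/2$; Chebyshev's inequality together with $\Exp[(M^{s,x}_r)^2]=\Exp[\langle M^{s,x}\rangle_r]\le\Lambda t$ then gives $\Pr(|M^{s,x}_r|\ge|x|/2)\le 4\Lambda t/|x|^2$, while for smaller $|x|$ we bound the probability by $1$. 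Integrating in $x$ shows $\int_{\R}\Pr(|Z^{s,x}_r|\le R)\dd x\le K(\delta)$, whence~\eqref{eq:intg}. Refining the same computation with the exponential martingale inequality $\Pr(\sup_{r\le t}|M^{s,x}_r|\ge\alpha)\le 2\e^{-\alpha^2/(2\Lambda t)}$ and using $|\g|\le C$ for bounded $|x|$ yields a first Gaussian bound
\[
  \forall(s,x)\in[0,t]\times\R,\qquad |\g(s,x)|\le\kappa(\delta)\,\e^{-x^2/\kappa(\delta)}.
\]

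For~\eqref{eq:dgL1} and~\eqref{eq:d2gL1} we use that $\g$ is smooth, that the coefficients of~\eqref{eq:appadj} are $C^{\infty}$ with bounded derivatives, and that the operator is uniformly parabolic with ellipticity constant $\eta/2$. Differentiating~\eqref{eq:appadj} once in $x$, the function $v:=\partial_x\g$ solves a linear parabolic Cauchy problem with smooth bounded coefficients (with bounds now depending on $\epsilon,\delta,\eta$), terminal condition $v(t,\cdot)=0$, and source $\partial_x f$, still supported in $\{|x|\le R\}$; differentiating once more, $w:=\partial_x^2\g$ solves a similar problem with terminal condition $0$ and source $\partial_x^2 f$ plus a term linear in $v$. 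In each case the Feynman--Kac formula represents the solution as the expectation, against an auxiliary diffusion with bounded drift and diffusion coefficient $(\eta+\tA_{\epsilon})^{1/2}$, of the source multiplied by a bounded multiplicative functional coming from the zeroth-order coefficient. Repeating the martingale estimate above---with the Gaussian decay of the source ($\ind{|x|\le R}$ for $v$, and the same together with the bound on $v$ just obtained for $w$), and enlarging $\kappa$ so that $1/\kappa<1/(2\Lambda t)$ in order to control $\Exp[\e^{(M^{s,x}_r)^2/\kappa}]$---gives $|\partial_x\g(s,x)|+|\partial_x^2\g(s,x)|\le\kappa(\epsilon,\delta,\eta)\e^{-x^2/\kappa(\epsilon,\delta,\eta)}$. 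Alternatively, the same two estimates follow from interior parabolic Schauder estimates on unit space-time cylinders: outside $\{|x|\le R\}$ the source $f$ vanishes, so the estimate only involves $\|\g\|_{L^{\infty}}$ on the cylinder, which is Gaussian-small by the displayed bound above.

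The main obstacle is this last step: one must check that the Gaussian tail is \emph{propagated} when the equation is differentiated---i.e. that the sources of the equations for $\partial_x\g$ and $\partial_x^2\g$ still decay at worst like a Gaussian---and that the Feynman--Kac expectation of a Gaussian of the diffusion is again controlled by a Gaussian of the starting point, which is precisely what forces $\kappa$ to depend on all three regularization parameters. Everything else is bookkeeping of which constant depends on which of $\epsilon$, $\delta$, $\eta$.
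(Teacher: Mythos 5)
Your proposal is correct, and for \eqref{eq:supg}--\eqref{eq:intg} it follows the paper verbatim: read the bound off the Feynman--Kac representation~\eqref{eq:FK}, then for $|x|$ outside the support of $f$ use Chebyshev on the diffusion $Z^{s,x}$, whose drift is $\le K(\delta)$ by~\eqref{eq:BK} and whose quadratic variation is $\le \Lambda t$ uniformly in $(\epsilon,\delta,\eta)$.

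For the derivative bounds~\eqref{eq:dgL1}--\eqref{eq:d2gL1} you take a genuinely different route. The paper differentiates~\eqref{eq:appadj} only \emph{once}, writes $\partial_x\g$ as the integral of $\partial_x f$ against the fundamental solution $G^1$ of the differentiated Cauchy problem~\eqref{eq:pbdg}, and then invokes Friedman's parametrix bounds~\eqref{eq:G1}--\eqref{eq:dG1}: the Gaussian bound on $G^1$ gives~\eqref{eq:dgL1}, and the Gaussian bound on $\partial_x G^1$ gives~\eqref{eq:d2gL1} directly, without ever writing a PDE for $\partial_x^2\g$ (the $1/(r-s)$ singularity in~\eqref{eq:dG1} is then absorbed by the change of variable $v = (x-x_+)/(\kappa(r-s))^{1/2}$). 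You instead differentiate \emph{twice}, apply Feynman--Kac to the $v$- and $w$-equations in turn, and push the Gaussian tail through the step $\Exp[\e^{-(Z^2_r)^2/\kappa}]\le C\e^{-cx^2}$ using $Z_r^2\ge \tfrac12(|x|-B_0t)^2-M_r^2$ together with the sub-Gaussian moment $\Exp[\e^{M_r^2/\kappa}]<\infty$ for $\kappa>2\Lambda t$. Both routes need, at bottom, a Gaussian heat-kernel-type input whose constants blow up as $\epsilon\to0$ (in your case through $\|\partial_x\tA_\epsilon\|_\infty$, $\|\partial_x\tB^\delta_{\eta,\epsilon}\|_\infty$, $\|\partial_x^2\tB^\delta_{\eta,\epsilon}\|_\infty$; in the paper's case through the H\"older norms entering Friedman's construction), which is exactly why $\kappa$ picks up all three parameters. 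Your approach is more elementary (martingale inequalities rather than parametrix bounds), at the cost of differentiating one more time and tracking how Gaussian decay propagates through the $w$-equation's source, which you correctly identify as the crux. The Schauder alternative you mention is also viable but, like the Friedman route, quietly depends on H\"older norms of the mollified coefficients.

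Two small points of bookkeeping you should make explicit if you flesh this out: first, the Feynman--Kac representation for $v$ carries the multiplicative functional $\exp(\int_s^r\partial_x\tB^\delta_{\eta,\epsilon}\dd u)$ (see~\eqref{eq:FK1}) and the one for $w$ carries both a similar exponential and the extra source term $-\partial_x^2\tB^\delta_{\eta,\epsilon}\,v$, so your "bounded multiplicative functional" and your Gaussian source are both present and must be handled together; second, the inequality $\Exp[\e^{-(Z^2_r)^2/\kappa}]\le\e^{-(|x|-B_0t)^2/(2\kappa)}\Exp[\e^{M_r^2/\kappa}]$ is where the constraint $\kappa>2\Lambda t$ actually enters, and it is worth noting that $\Lambda$ is independent of the regularization, so the loss in the Gaussian rate is under control.
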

\begin{proof}
  The inequality~\eqref{eq:supg} directly follows from the Feynman-Kac formula~\eqref{eq:FK}. Besides, since $f$ has a compact support in $[0,t] \times \R$, say $\Supp f \subset [0,t] \times [x_-, x_+]$, one has
  \begin{equation*}
    |\g(s,x)| \leq ||f||_{\infty}\int_s^t \Pr(Z^{s,x}_r \in [x_-,x_+]) \dd r,
  \end{equation*}
  and for $x > x_+$,
  \begin{equation*}
    \Pr(Z^{s,x}_r \in [x_-,x_+]) \leq \Pr(x-Z^{s,x}_r \geq x-x_+) \leq \frac{\Exp[(x-Z^{s,x}_r)^2]}{(x-x_+)^2}.
  \end{equation*} 
  Owing to~\eqref{eq:eds} and~\eqref{eq:BK},
  \begin{equation*}
    \Exp[(x-Z^{s,x}_r)^2] \leq 2\left((r-s)^2||\tB_{\eta,\epsilon}^{\delta}||_{\infty}^2 + (r-s)||\eta + \tA_{\epsilon}||_{\infty}\right) \leq K(\delta),
  \end{equation*}
  and similar arguments for $x < x_-$ yield~\eqref{eq:intg}.
    
  In order to prove~\eqref{eq:dgL1} and~\eqref{eq:d2gL1}, let us take the derivative with respect to $x$ of the problem~\eqref{eq:appadj}. Then the function $\partial_x \g$ is the unique classical solution of the Cauchy problem
  \begin{equation}\label{eq:pbdg}
    \left\{\begin{aligned}
      & \frac{1}{2}(\eta+\tA_{\epsilon})\partial^2_x g^1 + \left(\frac{1}{2}\partial_x\tA_{\epsilon} + \tB_{\eta,\epsilon}^{\delta}\right)\partial_x g^1 + \partial_x \tB_{\eta,\epsilon}^{\delta} g^1 + \partial_s g^1 = \partial_x f,\\
      & g^1(t,x) = 0,
    \end{aligned}\right. 
  \end{equation}
  and the Feynman-Kac formula now writes
  \begin{equation}\label{eq:FK1}
    \begin{split}
      \partial_x \g(s,x) &= -\Exp\left[\int_s^t \partial_x f(r,Z^{1,s,x}_r)\exp\left(\int_s^r \partial_x \tB_{\eta,\epsilon}^{\delta}(u,Z^{1,s,x}_u)\dd u\right)\dd r\right]\\
      &=  -\int_s^t\int_{\R} \partial_x f(r,z)G^1(s,x;r,z)\dd z\dd r
    \end{split}
  \end{equation}
  where $(Z^{1,s,x}_r)_{r \in [0,t]}$ is the associated diffusion process and $G^1(s,x;r,z)$ is the {\em fundamental solution} of~\eqref{eq:pbdg}. Following Friedman~\cite[p.~24]{friedman}, there exists some constant $\kappa > 0$ depending on the coefficients of~\eqref{eq:pbdg} (therefore, on $\epsilon$, $\delta$ and $\eta$) such that, for all $s<r$, 
  \begin{eqnarray}
    \label{eq:G1}& &|G^1(s,x;r,z)| \leq \frac{\kappa}{(r-s)^{1/2}}\exp\left(-\frac{(z-x)^2}{\kappa(r-s)}\right),\\
    \label{eq:dG1}& &|\partial_x G^1(s,x;r,z)| \leq \frac{\kappa}{r-s}\exp\left(-\frac{(z-x)^2}{\kappa(r-s)}\right).
  \end{eqnarray}
  For $x > x_+$, \eqref{eq:FK1} combined with~\eqref{eq:G1} yields
  \begin{equation*}
    \begin{split}
      |\partial_x \g(s,x)| &\leq \int_s^t\int_{z=x_-}^{x_+}||\partial_x f||_{\infty} \frac{\kappa}{(r-s)^{1/2}}\exp\left(-\frac{(z-x_+)^2}{\kappa(r-s)}\right)\dd z\dd r\\
      &\leq \kappa||\partial_x f||_{\infty} (x_+-x_-) \exp\left(-\frac{(x-x_+)^2}{\kappa(t-s)}\right) \int_s^t\frac{\dd r}{(r-s)^{1/2}}\\
      &\leq 2(t-s)^{1/2}\kappa||\partial_x f||_{\infty} (x_+-x_-) \exp\left(-\frac{(x-x_+)^2}{\kappa(t-s)}\right),
    \end{split}  
  \end{equation*}
  and similar arguments for $x < x_-$ lead to~\eqref{eq:dgL1}. Likewise, for $x > x_+$, using~\eqref{eq:dG1} one gets
  \begin{equation*}
    \begin{split}
      |\partial_x^2 \g(s,x)| &\leq \int_s^t\int_{z=x_-}^{x_+} ||\partial_x f||_{\infty}\frac{\kappa}{r-s}\exp\left(-\frac{(z-x_+)^2}{\kappa(r-s)}\right) \dd z\dd r\\
      &\leq \kappa||\partial_x f||_{\infty}(x_+-x_-)\int_s^t \frac{1}{r-s}\exp\left(-\frac{(x-x_+)^2}{\kappa(r-s)}\right) \dd r.
    \end{split}
  \end{equation*}
  Writing, thanks to the change of variable $v = (x-x_+)/(\kappa(r-s))^{1/2}$
  \begin{equation*}
    \int_s^t \frac{1}{r-s}\exp\left(-\frac{(x-x_+)^2}{\kappa(r-s)}\right) \dd r  = \int_{(x-x_+)/(\kappa(t-s))^{1/2}}^{+\infty} \frac{2}{v}\exp(-v^2)\dd v,
  \end{equation*}
  and using the fact that, as soon as $x \geq x_+ + (t\kappa)^{1/2}$,
  \begin{equation*}
    \forall v \geq \frac{x-x_+}{(\kappa(t-s))^{1/2}}, \qquad \frac{1}{v} \leq \frac{\kappa(t-s)}{(x-x_+)^2} v,
  \end{equation*}
  we deduce that for $x \geq x_+ + (t\kappa)^{1/2}$,
  \begin{equation*}
    \int_{(x-x_+)/(\kappa(t-s))^{1/2}}^{+\infty} \frac{2}{v}\exp(-v^2)\dd v \leq \frac{\kappa(t-s)}{(x-x_+)^2} \exp\left(-\frac{(x-x_+)^2}{\kappa(t-s)}\right) \leq \exp\left(-\frac{(x-x_+)^2}{\kappa(t-s)}\right).
  \end{equation*}
  By similar arguments for $x < x_-$, one finally concludes to~\eqref{eq:d2gL1}.
\end{proof}

By the definition of $\g$,
\begin{equation}\label{eq:gsol1}
  \begin{split}
    & \int_{[0,+\infty) \times \R} (F^2_s(x)-F^1_s(x))f(s,x) \dd s\dd x = \int_{Q_t} (F^2_s(x)-F^1_s(x))f(s,x) \dd s\dd x \\
    & \qquad = \int_{Q_t} (F^2_s-F^1_s)\left\{\frac{1}{2}(\eta + \tA_{\epsilon})\partial^2_x \g + \tB_{\eta,\epsilon}^{\delta}\partial_x \g + \partial_s \g\right\}\dd s\dd x.
  \end{split}
\end{equation}

It follows from the boundedness of $\tA_{\epsilon}$ and $\tB^{\delta}_{\eta,\epsilon}$ and from Lemma~\ref{lem:g} that 
\begin{equation*}
  \sup_{s \in [0,t]} |\partial_s \g(s,x)| \leq \kappa(\epsilon,\delta,\eta)\exp(-\kappa(\epsilon,\delta,\eta)x^2).
\end{equation*}
Consequently, $\g \in C^{1,2}_{\mathrm{b}}([0,t]\times\R)$, therefore due to remark~\ref{rk:C12},
\begin{equation}\label{eq:gsol2}
  \int_{Q_t} (F^2_s-F^1_s) \left\{\frac{1}{2} \tA\partial^2_x \g + \tB\partial_x \g + \partial_s \g\right\} \dd s\dd x = 0.
\end{equation}
As a conclusion, subtracting~\eqref{eq:gsol2} to~\eqref{eq:gsol1},
\begin{equation}\label{eq:int}
  \begin{split}
    & \int_{[0,+\infty) \times \R} (F^2_s(x)-F^1_s(x))f(s,x) \dd s\dd x\\
    & \qquad = \int_{Q_t} (F^2_s-F^1_s)\left\{\frac{1}{2}(\eta + \tA_{\epsilon} - \tA)\partial^2_x \g + (\tB_{\eta,\epsilon}^{\delta} - \tB)\partial_x \g\right\}\dd s\dd x.
  \end{split}
\end{equation}

We now have to prove that the right-hand side of~\eqref{eq:int} goes to $0$ as $\epsilon, \delta, \eta \to 0$. In this purpose, we closely follow the line of~\cite{liu}. In particular, the proofs of our Lemmas~\ref{lem:1} and~\ref{lem:3} are nothing but transcriptions of the proofs of Lemmas~1 and~3 in~\cite{liu} to the framework of an unbounded domain $Q_t$ and weak solutions in $\F(T)$. Then the estimates \eqref{eq:supg}--\eqref{eq:d2gL1} ensure that the computations still make sense.

\sk
Recall that $C$ refers to a positive constant that does not depend on $\epsilon$, $\eta$ or $\delta$.

\begin{lem}\cite[Lemma~1]{liu}\label{lem:1}
  The functions $\partial_x \g$ and $\partial_x^2 \g$ are such that:
  \begin{eqnarray}
    \label{eq:lem1:i} & & \int_{Q_t} \frac{1}{2}(\eta+\tA_{\epsilon})(\partial_x^2 \g)^2 \dd s\dd x \leq \frac{K(\delta)}{\eta} + C,\\
    \label{eq:lem1:ii} & & \int_{Q_t} (\partial_x \g)^2 \dd s\dd x \leq \frac{K(\delta)}{\eta} + C.
  \end{eqnarray}
\end{lem}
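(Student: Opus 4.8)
The plan is to obtain an energy estimate by testing the approximate adjoint equation~\eqref{eq:appadj} against $\partial_x^2\g$, and then to close a short bootstrap between the two quantities $I:=\int_{Q_t}(\eta+\tA_{\epsilon})(\partial_x^2\g)^2\dd s\dd x$ and $J:=\int_{Q_t}(\partial_x\g)^2\dd s\dd x$. First I would multiply~\eqref{eq:appadj} by $\partial_x^2\g$ and integrate over $Q_t$. An integration by parts in $x$ turns $\int_{Q_t}\partial_s\g\,\partial_x^2\g$ into $-\frac12\int_0^t\frac{\dd}{\dd s}\big(\int_{\R}(\partial_x\g)^2\dd x\big)\dd s$, which by the terminal condition $\g(t,\cdot)=0$ equals $\frac12\int_{\R}(\partial_x\g(0,x))^2\dd x\ge 0$. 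All the boundary terms at $x=\pm\infty$ produced here and in the integrations by parts below vanish thanks to the Gaussian decay of $\partial_x\g$ and $\partial_x^2\g$ in Lemma~\ref{lem:g} together with the analogous decay of $\partial_s\g$; these bounds (with $\epsilon,\delta,\eta$-dependent constants, which is harmless at this stage) also guarantee that every integral below is finite and that $s\mapsto\int_{\R}(\partial_x\g)^2\dd x$ is $C^1$. One is thus left with the identity
\[
  \tfrac12 I+\tfrac12\int_{\R}(\partial_x\g(0,x))^2\dd x=\int_{Q_t}f\,\partial_x^2\g\dd s\dd x-\int_{Q_t}\tB_{\eta,\epsilon}^{\delta}\,\partial_x\g\,\partial_x^2\g\dd s\dd x.
\]

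Next I would bound the two terms on the right by Young's inequality, always writing $\partial_x^2\g=(\eta+\tA_{\epsilon})^{-1/2}\cdot(\eta+\tA_{\epsilon})^{1/2}\partial_x^2\g$ so that the resulting $(\eta+\tA_{\epsilon})(\partial_x^2\g)^2$-parts can be absorbed into $\tfrac12 I$. For the source term this produces a contribution $\le\tfrac18 I+\tfrac{C}{\eta}\int_{Q_t}f^2\dd s\dd x\le\tfrac18 I+\tfrac{C}{\eta}$, since $f$ is fixed with compact support. The decisive point is the $\tB$-term: rather than the crude $||\tB_{\eta,\epsilon}^{\delta}||_{\infty}\le K(\delta)$, which would cost a factor $1/\eta$, I would use the very definition $\tB_{\eta,\epsilon}^{\delta}=\lambda_{\eta,\epsilon}^{\delta}\,[\tfrac12(\eta+\tA_{\epsilon})]^{1/2}$, whence $(\tB_{\eta,\epsilon}^{\delta})^2/(\eta+\tA_{\epsilon})=\tfrac12(\lambda_{\eta,\epsilon}^{\delta})^2\le\tfrac12 K(\delta)^2$ uniformly in $\eta$ and $\epsilon$ by~\eqref{eq:est}; this yields a contribution $\le\tfrac18 I+K(\delta)J$. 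Dropping the nonnegative boundary term and rearranging gives the coupled inequality $I\le \frac{C}{\eta}+K(\delta)J$.

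It then remains to control $J$ in terms of $I$. Integrating by parts once more, $J=-\int_{Q_t}\g\,\partial_x^2\g\dd s\dd x$, and the same $(\eta+\tA_{\epsilon})^{\pm1/2}$-splitting in Young's inequality gives, for every $\mu>0$, $J\le \frac{\mu}{2}I+\frac{1}{2\mu\eta}\int_{Q_t}\g^2\dd s\dd x$; and $\int_{Q_t}\g^2\dd s\dd x\le\int_0^t||\g(s,\cdot)||_{\infty}\,||\g(s,\cdot)||_{L^1(\R)}\dd s\le K(\delta)$ by~\eqref{eq:supg} and~\eqref{eq:intg}. Choosing $\mu$ of order $1/K(\delta)$ and inserting this into the coupled inequality lets me absorb the $I$-term on the right, which gives $I\le K(\delta)/\eta+C$, i.e.~\eqref{eq:lem1:i} up to the harmless factor $\tfrac12$; feeding this bound back into the estimate for $J$ then yields~\eqref{eq:lem1:ii}.

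The genuine content of the argument, and the step I expect to require the most care, is the realization that the $\tB$-term must be handled through the algebraic identity $(\tB_{\eta,\epsilon}^{\delta})^2=\tfrac12(\lambda_{\eta,\epsilon}^{\delta})^2(\eta+\tA_{\epsilon})$ — precisely the reason for which the approximating coefficients were built this way — together with the small bootstrap between $I$ and $J$ that closes the estimate with the correct power of $1/\eta$; by contrast, the integrations by parts on the unbounded strip $Q_t$ are routine once the decay estimates of Lemma~\ref{lem:g} are invoked.
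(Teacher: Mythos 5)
Your proposal is correct and follows essentially the same route as the paper's source (the paper explicitly defers to the proof of Lemma~1 in~\cite{liu}, of which this is a direct transcription to the unbounded strip): one tests~\eqref{eq:appadj} against $\partial_x^2\g$, uses the terminal condition to discard the time term, exploits the built-in identity $(\tB_{\eta,\epsilon}^{\delta})^2=\tfrac12(\lambda_{\eta,\epsilon}^{\delta})^2(\eta+\tA_{\epsilon})$ with the uniform bound $|\lambda_{\eta,\epsilon}^{\delta}|\leq K(\delta)$, and closes via the companion estimate $J=-\int_{Q_t}\g\,\partial_x^2\g$ and the $L^1\cap L^\infty$ bounds of Lemma~\ref{lem:g}. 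The Gaussian decay of $\partial_x\g$, $\partial_x^2\g$ (and $\partial_s\g$) is precisely what makes the integrations by parts over the unbounded domain legitimate, so your account is complete.
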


\begin{lem}\cite[Lemma~3]{liu}\label{lem:3}
  The function $\partial_x \g$ is such that:
  \begin{equation}\label{eq:intdg}
    \sup_{s \in [0,t]} \int_{\R} |\partial_x \g(s,x)|\dd x \leq C.
  \end{equation}
\end{lem}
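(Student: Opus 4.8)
The plan is to transcribe the argument of~\cite[Lemma~3]{liu} to the unbounded strip $Q_t$, the point being that the Gaussian tail estimates~\eqref{eq:dgL1}--\eqref{eq:d2gL1} of Lemma~\ref{lem:g} make every integral below convergent and make all the boundary contributions at $x=\pm\infty$ vanish. Recall from~\eqref{eq:pbdg} that $w:=\partial_x\g$ is a classical (in fact $C^{\infty}$) solution of a linear parabolic equation with zero terminal datum $w(t,\cdot)=0$; reversing time through $\tau:=t-s$ turns it into the forward Cauchy problem
\begin{equation*}
  \partial_{\tau}w = \frac{1}{2}(\eta+\tA_{\epsilon})\partial_x^2 w + \left(\frac{1}{2}\partial_x\tA_{\epsilon} + \tB_{\eta,\epsilon}^{\delta}\right)\partial_x w + \partial_x\tB_{\eta,\epsilon}^{\delta}\,w - \partial_x f, \qquad w(0,\cdot)=0.
\end{equation*}

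For $\rho>0$ I would use the smooth convex approximation $\psi_{\rho}(r):=\sqrt{r^2+\rho^2}-\rho$ of $|r|$, which satisfies $\psi_{\rho}(0)=0$, $|\psi_{\rho}'|\le 1$, $\psi_{\rho}(r)\uparrow|r|$ as $\rho\downarrow 0$, and the algebraic bound $0\le r\psi_{\rho}'(r)-\psi_{\rho}(r)\le\rho$. Multiply the equation for $w$ by $\psi_{\rho}'(w)$ and integrate over $\R$; the differentiation under the integral and the integrations by parts below are justified by the smoothness of $\g$ and the tail estimates of Lemma~\ref{lem:g}, uniform in $s$. Integrating by parts in the second order term, the boundary terms vanish since $w$ and $\partial_x w=\partial_x^2\g$ decay like Gaussians by~\eqref{eq:dgL1}--\eqref{eq:d2gL1}; the term $\int_{\R}\frac{1}{2}(\eta+\tA_{\epsilon})(\partial_x w)^2\psi_{\rho}''(w)\dd x$ is nonnegative by convexity of $\psi_{\rho}$ and the ellipticity $\eta+\tA_{\epsilon}>0$, hence may be discarded; the two integrals carrying $\frac{1}{2}\partial_x\tA_{\epsilon}$ cancel exactly; and the drift term, rewritten as $\tB_{\eta,\epsilon}^{\delta}\partial_x(\psi_{\rho}(w))$, yields $-\int_{\R}\partial_x\tB_{\eta,\epsilon}^{\delta}\psi_{\rho}(w)\dd x$ after one further integration by parts (again with vanishing boundary term, by~\eqref{eq:dgL1} and the boundedness of $\tB_{\eta,\epsilon}^{\delta}$). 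Altogether one obtains, for every $\tau\in[0,t]$,
\begin{equation*}
  \frac{\dd}{\dd\tau}\int_{\R}\psi_{\rho}(w(\tau,x))\dd x \le \int_{\R}\partial_x\tB_{\eta,\epsilon}^{\delta}\big(w\psi_{\rho}'(w)-\psi_{\rho}(w)\big)\dd x + \int_{\R}|\partial_x f(t-\tau,x)|\dd x.
\end{equation*}

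It then remains to let $\rho\downarrow 0$. On the left, monotone convergence gives $\int_{\R}\psi_{\rho}(w)\dd x\uparrow\int_{\R}|w|\dd x$. On the right, the first integrand is bounded by $\|\partial_x\tB_{\eta,\epsilon}^{\delta}\|_{\infty}\min(\rho,2|w|)$, hence dominated uniformly in $\rho$ by $2\|\partial_x\tB_{\eta,\epsilon}^{\delta}\|_{\infty}|w|\in L^1(\R)$ (using~\eqref{eq:dgL1}) and tending to $0$ pointwise, so it disappears by dominated convergence. Integrating the resulting inequality from $\tau=0$, where $w$ vanishes, to an arbitrary $\tau\in[0,t]$ yields
\begin{equation*}
  \int_{\R}|w(\tau,x)|\dd x \le \int_0^{\tau}\int_{\R}|\partial_x f(t-\tau',x)|\dd x\,\dd\tau' \le t\sup_{r\in[0,t]}\int_{\R}|\partial_x f(r,x)|\dd x =: C,
\end{equation*}
which is~\eqref{eq:intdg}, with $C$ depending only on $t$ and $f$ and not on $\epsilon$, $\delta$ or $\eta$. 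The only genuinely delicate point is this passage to the limit on the unbounded domain: one must check that the decay furnished by Lemma~\ref{lem:g} is strong enough both to suppress the boundary terms in the integrations by parts and to invoke dominated convergence. This is exactly the step where the (non-uniform) constants $\kappa(\epsilon,\delta,\eta)$ are used, while they are absent from the final bound.
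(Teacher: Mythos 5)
Your proof is correct and is exactly the kind of transcription of the $L^1$ sign-approximation argument from~\cite[Lemma~3]{liu} that the paper has in mind; the paper itself only cites~\cite{liu} and observes that the estimates~\eqref{eq:supg}--\eqref{eq:d2gL1} make the computations go through on the unbounded strip. You identify the two roles played by Lemma~\ref{lem:g} correctly: killing the boundary terms at $x=\pm\infty$ and providing the dominating function for the passage $\rho\downarrow 0$, both with constants that may depend on $\epsilon,\delta,\eta$ but drop out of the final estimate.

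One small technical point you gloss over: you invoke ``the tail estimates of Lemma~\ref{lem:g}'' to justify differentiating $\tau\mapsto\int_{\R}\psi_{\rho}(w(\tau,x))\dd x$ under the integral sign, but controlling $\partial_{\tau}w$ through the equation requires a Gaussian decay estimate on $\partial_x^2 w=\partial_x^3\g$, which is not among~\eqref{eq:dgL1}--\eqref{eq:d2gL1}. It is obtained by the same Feynman--Kac argument applied to one further derivative of~\eqref{eq:pbdg} (Friedman's parametrix bounds extend to higher derivatives), or one can sidestep the issue entirely by writing the estimate in integrated form, i.e.\ testing the equation against $\psi_{\rho}'(w)\mathbf{1}_{[0,\tau]}(\tau')$ and using Fubini rather than a pointwise-in-$\tau$ derivative. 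Either way the gap is cosmetic and your conclusion, with $C=t\sup_{r\in[0,t]}\int_{\R}|\partial_x f(r,x)|\dd x$ independent of $\epsilon,\delta,\eta$, is the right one.
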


The estimates of Lemmas~\ref{lem:g}, \ref{lem:1} and~\ref{lem:3} give sufficient uniformity over the derivatives of $\g$ to conclude.
\begin{prop}\label{prop:int}
  The right-hand side of~\eqref{eq:int} is arbitrarily small when $\epsilon, \eta, \delta \to 0$.
\end{prop}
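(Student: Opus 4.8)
The plan is to split the right-hand side of~\eqref{eq:int} into the three integrals
\begin{equation*}
  I_\eta := \frac{\eta}{2}\int_{Q_t}(F^2_s-F^1_s)\partial_x^2\g\,\dd s\,\dd x,\quad I_A := \frac12\int_{Q_t}(F^2_s-F^1_s)(\tA_\epsilon-\tA)\partial_x^2\g\,\dd s\,\dd x,\quad I_B := \int_{Q_t}(F^2_s-F^1_s)(\tB_{\eta,\epsilon}^\delta-\tB)\partial_x\g\,\dd s\,\dd x,
\end{equation*}
following the splitting of~\cite{liu}, and to show that each of them becomes arbitrarily small along the iterated limit $\epsilon\to0$, then $\eta\to0$, then $\delta\to0$. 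The basic inputs are that $F^2-F^1$ is bounded by $1$ and integrable on $Q_t$ (so that $(F^2-F^1)^2\le|F^2-F^1|$ is integrable and $|F^2-F^1|\,\dd s\,\dd x$ is a finite measure), the weighted $L^2$ bounds of Lemma~\ref{lem:1}, the uniform bound $\sup_s\int_\R|\partial_x\g|\,\dd x\le C$ of Lemma~\ref{lem:3}, and the almost everywhere convergences $\tA_\epsilon\to\tA$, $\lambda_{\eta,\epsilon}^\delta\to\lambda_\eta^\delta$ of~\eqref{eq:est}.

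For $I_A$ I would apply the Cauchy--Schwarz inequality, pairing $(F^2-F^1)(\tA_\epsilon-\tA)(\eta+\tA_\epsilon)^{-1/2}$ against $(\eta+\tA_\epsilon)^{1/2}\partial_x^2\g$: the second factor is controlled by estimate~\eqref{eq:lem1:i}, while, since $\tA_\epsilon,\tA\le C$ gives $(\tA_\epsilon-\tA)^2/(\eta+\tA_\epsilon)\le 2C|\tA_\epsilon-\tA|/\eta$, the first factor squared is at most $(2C/\eta)\int_{Q_t}|F^2-F^1|\,|\tA_\epsilon-\tA|\,\dd s\,\dd x$, which vanishes as $\epsilon\to0$ by dominated convergence against the finite measure $|F^2-F^1|\,\dd s\,\dd x$; hence $I_A\to0$ as $\epsilon\to0$ with $\delta,\eta$ fixed. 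For $I_B$ I would write $\tB_{\eta,\epsilon}^\delta-\tB=(\tB_{\eta,\epsilon}^\delta-\tB_\eta^\delta)+(\tB_\eta^\delta-\tB)$, where $\tB_\eta^\delta:=\lambda_\eta^\delta[\tfrac12(\eta+\tA)]^{1/2}$ equals $\tB$ on $F_\delta$ and vanishes on $G_\delta$. The second difference then contributes $-\int_{G_\delta}(F^2-F^1)\tB\partial_x\g$, whose absolute value is at most $\delta\|b\|_\infty\int_0^t\int_\R|\partial_x\g|\,\dd x\,\dd s\le C\delta$ thanks to Lemma~\ref{lem:3}; the first difference is bounded by $2K(\delta)$ and converges to $0$ a.e., so the same Cauchy--Schwarz/dominated-convergence scheme as for $I_A$ (now using estimate~\eqref{eq:lem1:ii}) shows its contribution tends to $0$ as $\epsilon\to0$.

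The term $I_\eta$ is the one I expect to be the genuine obstacle. I would split $Q_t=F_\delta\cup G_\delta$. On $F_\delta$ the coefficient $\eta+\tA_\epsilon$ is bounded below by $L(\delta)$ once $\epsilon$ is small enough — up to a boundary layer whose $s,x$-measure tends to $0$ with $\epsilon$ and whose contribution is handled separately using Lemma~\ref{lem:1} — so in the Cauchy--Schwarz estimate the weight $(\eta+\tA_\epsilon)^{-1/2}$ does \emph{not} produce a factor $\eta^{-1/2}$, and one keeps a genuine factor $\sqrt\eta$; the $F_\delta$-contribution is therefore $O\big(\sqrt\eta\,(K(\delta)/L(\delta))^{1/2}\big)$, which vanishes as $\eta\to0$. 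The remaining piece $\tfrac{\eta}{2}\int_{G_\delta}(F^2-F^1)\partial_x^2\g$, where $|F^2-F^1|<\delta$, is the delicate one: there one must balance this smallness against the precise dependence on $\delta$ and $\eta$ of the estimates of Lemmas~\ref{lem:g}--\ref{lem:3}, carefully distinguishing the uniform constants $C$ from the $\delta$-dependent constants $K(\delta)$; this bookkeeping is where the argument of Liu and Wang does its real work. Once $I_\eta$, $I_A$ and $I_B$ are all shown to be arbitrarily small, the proposition follows; and since~\eqref{eq:int} is an exact identity, this forces $\int_{Q_t}(F^2_s-F^1_s)f\,\dd s\,\dd x=0$ for every $f\in\Cc([0,T)\times\R)$, whence $F^1=F^2$ in $\F(T)$.
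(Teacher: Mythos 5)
Your decomposition of the right-hand side of~\eqref{eq:int} into $I_\eta$, $I_A$, $I_B$ and your iterated limit $\epsilon\to0$, $\eta\to0$, $\delta\to0$ coincide with the paper's $I_2$, $I_1$, $I_3$ up to labelling, and the treatment of $I_A$ is the same Cauchy--Schwarz plus dominated-convergence argument used in the paper. For $I_B$ your route is genuinely different and somewhat cleaner: by introducing the intermediate coefficient $\tB_\eta^\delta=\lambda_\eta^\delta[\tfrac12(\eta+\tA)]^{1/2}$, which coincides with $\tB$ on $F_\delta$ and vanishes on $G_\delta$, you reduce the $G_\delta$-part to $|\int_{G_\delta}\bF\,\tB\,\partial_x\g|\le\delta\|b\|_\infty\sup_s\int_\R|\partial_x\g|\le C\delta$ using only Lemma~\ref{lem:3}, whereas the paper extracts a subsequence along which $|\partial_x\gk|$ converges weakly in $L^2(Q_t)$ to a nonnegative $h$ with $\int_{Q_t}h\le C$ and passes to the limit in the pairing. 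Both yield $\limsup_{\epsilon\to0}|I_B|\le C\delta$; your version avoids the weak-compactness step.

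The genuine gap is in $I_\eta$, precisely where you stop and defer to unspecified ``bookkeeping''. If you split $Q_t$ into $F_\delta\cup G_\delta$, the $G_\delta$-piece after Cauchy--Schwarz (with smallness $|\bF|<\delta$ placed on the $\partial_x^2\g$ factor, as in the paper) is bounded, after $\epsilon\to0$, by $\tfrac{\eta}{2}\cdot(C/\eta)^{1/2}\cdot(\delta(C+K(\delta)/\eta))^{1/2}$, which tends to $C(\delta K(\delta))^{1/2}$ as $\eta\to0$. The constant $K(\delta)$ comes from $\|\lambda^\delta_{\eta}\|_\infty\le\|b\|_\infty(\tfrac12 L(\delta))^{-1/2}$, and $L(\delta)=\inf\{\tA:|F^1-F^2|\ge\delta\}$ tends to $0$ as $\delta\to0$ under (D1) alone (only strict monotonicity of $A$ is assumed). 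So $\delta K(\delta)$ has no reason to vanish, and your iterated limit $\epsilon\to0$, $\eta\to0$, $\delta\to0$ does not close. The missing idea in the paper is to introduce a \emph{fourth} parameter $\alpha$, decoupled from $\delta$: one splits $Q_t=F_\alpha\cup G_\alpha$ in the estimate of $I_\eta$ (while $\lambda^\delta_{\eta,\epsilon}$ and $K(\delta)$, $L(\delta)$ still carry $\delta$), obtaining $\limsup_{\epsilon\to0}|I|\le C\eta(C+K(\delta)/\eta)^{1/2}\big(L(\alpha)^{-1}+(\alpha/\eta)^{1/2}\big)+C\delta$, and then sends $\eta\to0$ (killing the $L(\alpha)^{-1}$ term and leaving $\sim(\alpha K(\delta))^{1/2}$), then $\alpha\to0$ (killing $(\alpha K(\delta))^{1/2}$ at fixed $\delta$), and only then $\delta\to0$. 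Without this extra parameter the argument fails; with it your treatment of $I_A$ and $I_B$ slots in unchanged.
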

\begin{proof}
  For lighter notations, let us denote $\bF(s,x) = F^2_s(x) - F^1_s(x)$, and
  \begin{equation*}
    \begin{split}
      I &:= \int_{Q_t} \bF \frac{1}{2}(\tA_{\epsilon}-\tA)\partial^2_x \g \dd s\dd x + \int_{Q_t} \bF \frac{\eta}{2}\partial^2_x \g \dd s\dd x + \int_{Q_t} \bF (\tB_{\eta,\epsilon}^{\delta} - \tB)\partial_x \g \dd s\dd x\\
      &=: I_1 + I_2 + I_3.
    \end{split}
  \end{equation*}
  Recall that since $F^1, F^2 \in \F(T)$, then $\bF \in (L^1 \cap L^{\infty})(Q_t)$.
  
  Owing to the Cauchy-Schwarz inequality,
  \begin{equation}\label{eq:A-A}
    |I_1| \leq \left(\int_{Q_t} |\bF|\frac{(\tA_{\epsilon}-\tA)^2}{2(\eta+\tA_{\epsilon})}\dd s\dd x\right)^{1/2}\left(\int_{Q_t} |\bF|\frac{1}{2}(\eta+\tA_{\epsilon})(\partial^2_x \g)^2\dd s\dd x\right)^{1/2}.
  \end{equation}
  Using~\eqref{eq:est}, by dominated convergence the first integral in the right-hand side of~\eqref{eq:A-A} goes to $0$ as $\epsilon \to 0$ for fixed $\eta$ and $\delta$. According to Lemma~\ref{lem:1}, the second integral in~\eqref{eq:A-A} is bounded by $C+K(\delta)/\eta$. Therefore, for fixed $\eta$ and $\delta$, $\lim_{\epsilon \to 0} I_1 = 0$.
  
  Let $\alpha > 0$. Recalling the definition~\eqref{eq:FG} of $F_{\alpha}$ and $G_{\alpha}$, let us write
  \begin{equation*}
    |I_2| \leq \frac{\eta}{2}\int_{F_{\alpha}} |\bF\partial^2_x \g| \dd s\dd x +\frac{\eta}{2}\int_{G_{\alpha}} |\bF\partial^2_x \g| \dd s\dd x.
  \end{equation*}
  By the Cauchy-Schwarz inequality,
  \begin{equation*}
    \begin{split}
      \int_{F_{\alpha}} |\bF\partial^2_x \g| \dd s\dd x & \leq \left(\int_{F_{\alpha}} |\bF| \frac{\dd s\dd x}{(1/2)(\eta+\tA_{\epsilon})}\right)^{1/2}\left(\int_{F_{\alpha}} |\bF| \frac{1}{2}(\eta+\tA_{\epsilon})(\partial_x^2 \g)^2\dd s\dd x\right)^{1/2}\\
      & \leq C \left(\sup_{F_{\alpha}} \frac{1}{(1/2)(\eta+\tA_{\epsilon})}\right)^{1/2}\left(C+\frac{K(\delta)}{\eta}\right)^{1/2}\\
      & \leq \frac{C}{L(\alpha)}\left(C+\frac{K(\delta)}{\eta}\right)^{1/2},
    \end{split}
  \end{equation*}
  where $L(\alpha)$ only depends on $\alpha$. Likewise,
  \begin{equation*}
    \begin{split}
      \int_{G_{\alpha}} |\bF\partial^2_x \g| \dd s\dd x & \leq \left(\int_{G_{\alpha}} |\bF| \frac{\dd s\dd x}{(1/2)(\eta+\tA_{\epsilon})}\right)^{1/2}\left(\int_{G_{\alpha}} |\bF| \frac{1}{2}(\eta+\tA_{\epsilon})(\partial_x^2 \g)^2\dd s\dd x\right)^{1/2}\\
      &\leq \frac{C}{\eta^{1/2}} \left(\alpha\left(C+\frac{K(\delta)}{\eta}\right)\right)^{1/2};
    \end{split}    
  \end{equation*}
  so that for fixed $\eta$, $\alpha$,
  \begin{equation*}
    \limsup_{\epsilon \to 0} |I_2| \leq C\eta\left(C+\frac{K(\delta)}{\eta}\right)^{1/2}\left(\frac{1}{L(\alpha)} + \left(\frac{\alpha}{\eta}\right)^{1/2}\right).
  \end{equation*}
  
  Finally, let us write
  \begin{equation*}
    |I_3| \leq \int_{F_{\delta}} |\bF(\tB_{\eta,\epsilon}^{\delta} - \tB)\partial_x \g|\dd s\dd x + \int_{G_{\delta}} |\bF(\tB_{\eta,\epsilon}^{\delta} - \tB)\partial_x \g|\dd s\dd x. 
  \end{equation*}
  We first deal with the integral on $G_{\delta}$. On account of Lemma~\ref{lem:1}, for given $\delta$ and $\eta$ the family $(|\partial_x\g|)_{\epsilon > 0}$ is bounded in $L^2(Q_t)$. Therefore there exists a sequence $(\epsilon_k)_{k \geq 1}$ decreasing to $0$, such that $|\partial_x\gk|$ converges weakly in $L^2(Q_t)$ to a function $h \geq 0$ when $k \to +\infty$. From now on, the convergence $\epsilon \to 0$ will always be understood along the sequence $(\epsilon_k)_{k \geq 1}$. According to Lemma~\ref{lem:3}, for all compact subset $D \subset Q_t$, 
  \begin{equation*}
    \int_D h\dd s\dd x = \lim_{\epsilon \to 0} \int_D |\partial_x\g|\dd s\dd x \leq C
  \end{equation*}
  so that 
  \begin{equation*}
    \int_{Q_t} h\dd s\dd x \leq C.
  \end{equation*}
  Furthermore, on $G_\delta$ one has $\tB^{\delta}_{\eta,\epsilon} \to 0$ a.e. when $\epsilon \to 0$, and $||\tB^{\delta}_{\eta,\epsilon}||_{\infty} \leq K(\delta)$. Since $\bF \in (L^1\cap L^{\infty})(Q_t) \subset L^2(Q_t)$, by dominated convergence one deduces that $\mathbf{1}_{G_{\delta}}|\bF(\tB_{\eta,\epsilon}^{\delta} - \tB)|$ converges strongly in $L^2(Q_t)$ to $\mathbf{1}_{G_{\delta}}|\bF \tB|$. Finally,
  \begin{equation*}
    \lim_{\epsilon \to 0} \int_{G_{\delta}} |\bF(\tB_{\eta,\epsilon}^{\delta} - \tB)\partial_x \g|\dd s\dd x = \int_{G_{\delta}} |\bF \tB|h\dd s\dd x \leq C\delta.
  \end{equation*}
  
  We now turn to the integral on $F_{\delta}$. By the Cauchy-Schwarz inequality,
  \begin{equation*}
    \int_{F_{\delta}} |\bF(\tB_{\eta,\epsilon}^{\delta} - \tB)\partial_x \g|\dd s\dd x \leq \left(\int_{F_{\delta}} |\bF|(\tB_{\eta,\epsilon}^{\delta} - \tB)^2\dd s\dd x\right)^{1/2}\left(\int_{F_{\delta}} |\bF|(\partial_x \g)^2\dd s\dd x\right)^{1/2}.
  \end{equation*}
  Owing to Lemma~\ref{lem:1} and the boundedness of $\bF$,
  \begin{equation*}
    \int_{F_{\delta}} |\bF|(\partial_x \g)^2\dd s\dd x \leq C + \frac{K(\delta)}{\eta}
  \end{equation*}
  and by construction, $\tB_{\eta,\epsilon}^{\delta} \to \tB$ a.e. in $F_{\delta}$ when $\epsilon \to 0$, while $||\tB_{\eta,\epsilon}^{\delta}||_{\infty} \leq K(\delta)$. By dominated convergence, on concludes that for fixed $\eta$ and $\delta$, $\limsup_{\epsilon \to 0} |I_3| \leq C\delta$.
  
  Combining the previous estimates, let us now write
  \begin{equation*}
    \limsup_{\epsilon \to 0} |I| \leq C\eta\left(C+\frac{K(\delta)}{\eta}\right)^{1/2}\left(\frac{1}{L(\alpha)} + \left(\frac{\alpha}{\eta}\right)^{1/2}\right) + C\delta
  \end{equation*}
  and conclude by taking consecutively $\eta \to 0$, $\alpha \to 0$ and $\delta \to 0$.
\end{proof}

It follows from Proposition~\ref{prop:int} and~\eqref{eq:int} that
\begin{equation*}
  \int_{[0,T) \times \R} (F^2_s(x)-F^1_s(x))f(s,x) \dd s\dd x = 0.
\end{equation*}
Since $f$ is arbitrary, $F^1_s(x) = F^2_s(x)$ a.e. in $Q_t$, and this holds for all $t \in [0,T)$. As a consequence, $F^1 = F^2$ in $\F(T)$.

\section{Proof of Proposition~\ref{prop:quant}}\label{app:quant}

This appendix is dedicated to the computation of the time derivative of the flow $t \mapsto W_p^p(F_t,G_t)$ of the Wasserstein distance between two solutions $F$ and $G$ of the Cauchy problem~\eqref{eq:myPDE} with respective initial conditions $F_0$ and $G_0$ and classical regularity. In Subsection~\ref{app:ss:tail}, we gather some tail estimates on the solutions $F_t$ and $G_t$ as well as their space derivatives. In Subsection~\ref{app:ss:quant}, we use a new expression of $W_p^p(F_t,G_t)$ in terms of $F_t$ and $G_t$ to prove Proposition~\ref{prop:quant}.

\subsection{Tail estimates}\label{app:ss:tail} In this subsection we are concerned with the asymptotic behaviour of $F_t(x)$ and $\partial_x F_t(x)$ when $|x|$ is large.

\begin{lem}\label{lem:FtF0}
  Under the assumptions of Proposition~\ref{prop:sp}, for all $t > 0$, there exists a finite constant $C(t) > 0$ such that the function $t \mapsto C(t)$ is nondecreasing and:
  \begin{equation*}
    \begin{aligned}
      & \forall x \leq -C(t), \qquad  \frac{1}{2} F_0(2x-C(t)) \leq F_t(x) \leq F_0\left(\frac{x}{2} + C(t)\right) + \exp\left(-\frac{x^2}{C(t)}\right),\\
      & \forall x \geq C(t), \qquad \frac{1}{2}[1-F_0(2x+C(t))] \leq 1-F_t(x) \leq 1-F_0\left(\frac{x}{2}-C(t)\right) + \exp\left(-\frac{x^2}{C(t)}\right).
    \end{aligned}
  \end{equation*}
\end{lem}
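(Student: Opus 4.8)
The plan is to use the probabilistic representation of the probabilistic solution that is already available at this point of the paper. Under the assumptions of Proposition~\ref{prop:sp} the flow $t\mapsto P(t)$ is a measure-valued solution of a linear Fokker--Planck equation with bounded coefficients $\bar a,\bar b$ such that $||\bar a||_\infty\le||a||_\infty=:a_\infty$ and $||\bar b||_\infty\le||b||_\infty=:b_\infty$, so the superposition argument from the proof of Lemma~\ref{lem:existnlmp} (which, up to the construction of the associated diffusion, uses only~(D1) and the first-moment assumption) provides, on every finite interval $[0,t]$, a weak solution
\begin{equation*}
  \bar X_s = \bar X_0 + \int_0^s \bar b(r,\bar X_r)\dd r + M_s,\qquad M_s:=\int_0^s \bar\sigma(r,\bar X_r)\dd\bar W_r,\quad \bar\sigma:=\bar a^{1/2},
\end{equation*}
with $\bar X_0\sim m$ independent of $\bar W$ and such that $F_s$ is the cumulative distribution function of $\bar X_s$ for all $s$. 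Since $\langle M\rangle_s\le a_\infty s$, the standard exponential inequality for continuous martingales gives $\Pr(\pm M_s\ge\rho)\le\exp(-\rho^2/(2a_\infty s))$ for all $\rho\ge 0$, and because $\langle M\rangle$ is dominated by a deterministic function the same bound holds conditionally on any $\sigma$-field containing $\sigma(\bar X_0)$. Note $a_\infty>0$ by~(D1). I would then set
\begin{equation*}
  C(t):=\max\{8a_\infty t,\ b_\infty t+\sqrt{2a_\infty t\ln 2}\},
\end{equation*}
which is finite, positive for $t>0$ and nondecreasing.

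The two upper bounds come from a plain union bound, with no conditioning. For $x<0$, from $\bar X_t\ge\bar X_0-b_\infty t+M_t$ one has $\{\bar X_t\le x\}\subseteq\{\bar X_0\le x+b_\infty t+\rho\}\cup\{M_t<-\rho\}$; taking $\rho=-x/2$ yields $F_t(x)\le F_0(x/2+b_\infty t)+\exp(-x^2/(8a_\infty t))$, and $C(t)\ge b_\infty t$, $C(t)\ge 8a_\infty t$ together with the monotonicity of $F_0$ turn this into the stated estimate on $\{x\le-C(t)\}$. The bound on $\{x\ge C(t)\}$ is the mirror image, starting from $\bar X_t\le\bar X_0+b_\infty t+M_t$ and taking $\rho=x/2$.

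For the two lower bounds I would condition on $\bar X_0$ first, because $M_t$ depends on the whole path of $\bar X$ and a direct product estimate is not available. For $x<0$ and a deterministic $y\le x-b_\infty t-\sqrt{2a_\infty t\ln 2}$, the inequality $\bar X_t\le y+b_\infty t+M_t$ gives
\begin{equation*}
  \Pr(\bar X_t\le x\mid\bar X_0=y)\ge\Pr(M_t\le x-y-b_\infty t\mid\bar X_0=y)\ge 1-\exp\!\left(-\frac{(x-y-b_\infty t)^2}{2a_\infty t}\right)\ge\frac12,
\end{equation*}
and integrating in $y$ against $m$ gives $F_t(x)\ge\frac12 F_0(x-b_\infty t-\sqrt{2a_\infty t\ln 2})\ge\frac12 F_0(2x-C(t))$ for $x\le-C(t)$. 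The lower bound on $\{x\ge C(t)\}$ is obtained symmetrically from $\bar X_t\ge y-b_\infty t+M_t$, integrating over $y\ge x+b_\infty t+\sqrt{2a_\infty t\ln 2}$.

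The one point that deserves care is the conditioning used in the lower bounds: one must justify that, conditionally on $\bar X_0$, the time-$t$ value $M_t$ still satisfies the sub-Gaussian tail bound with the same constant. This follows from the fact that $\exp(\theta M_s-\frac{\theta^2}{2}\langle M\rangle_s)$ is a supermartingale and $\langle M\rangle_t\le a_\infty t$ is deterministic, so that $\Exp[\exp(\theta M_t)\mid\mathcal F_0]\le\exp(\theta^2 a_\infty t/2)$ almost surely and the conditional Chernoff bound is uniform in the starting point. Everything else is elementary bookkeeping with these tail estimates, the monotonicity of $F_0$ and the explicit choice of $C(t)$.
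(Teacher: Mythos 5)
Your proof is correct and follows essentially the same route as the paper: both work with the linear diffusion $(\bar X_s)$ built in the proof of Lemma~\ref{lem:existnlmp} (noting, as you do, that only the Figalli superposition step is needed, which requires just (D1) and the first-moment hypothesis), then condition on $\bar X_0$ and apply a concentration bound for the bounded-quadratic-variation martingale $M_t=\int_0^t\bar\sigma(r,\bar X_r)\,\dd\bar W_r$. The only difference is cosmetic: you use the exponential (sub-Gaussian) tail of $M_t$ for both the lower and the upper estimates, whereas the paper uses a second-moment Chebyshev bound for the lower estimate (conditioning on the event $\{\bar X_0\le 2x-C_2(t)\}$ rather than on $\bar X_0=y$) and the exponential Markov bound for the upper one; both are justified by the deterministic bound $\langle M\rangle_t\le \lVert a\rVert_\infty t$. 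Your remark that the conditional Chernoff bound holds because $\exp(\theta M_s-\tfrac{\theta^2}{2}\langle M\rangle_s)$ is a nonnegative supermartingale with $Z_0=1$ is exactly the point that needs care, and you handle it correctly.
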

\begin{proof}
  Fix a finite $T>0$. We will use the process $(\bar{X}_t)_{t \in [0,T]}$ introduced in the proof of Lemma~\ref{lem:existnlmp}. Let $t \in [0,T]$ and define $C_1(t) := t||\bar{b}||_{\infty}$. If $x \leq -C_1(t)$, then
  \begin{equation*}
    \begin{split}
      F_t(x) & = \Pr\left(\bar{X}_0 + \int_0^t \bar{b}(r,\bar{X}_r)\dd r + \int_0^t \bar{\sigma}(r,\bar{X}_r)\dd \bar{W}_r \leq x\right) \\
      & \geq \Pr\left(\bar{X}_0 + \int_0^t \bar{\sigma}(r,\bar{X}_r)\dd \bar{W}_r \leq x - t||\bar{b}||_{\infty}\right) \\
      & \geq \Pr\left(\bar{X}_0 \leq 2(x - t||\bar{b}||_{\infty}), \quad \left|\int_0^t \bar{\sigma}(r,\bar{X}_r)\dd \bar{W}_r\right| \leq t||\bar{b}||_{\infty}-x \right)\\
      & = \Pr\left(\left|\int_0^t \bar{\sigma}(r,\bar{X}_r)\dd \bar{W}_r\right| \leq t||\bar{b}||_{\infty}-x \quad\Big|\quad \bar{X}_0 \leq 2x - C_2(t)\right)\Pr(\bar{X}_0 \leq 2x - C_2(t)),
    \end{split}
  \end{equation*}
  where $C_2(t) := 2C_1(t) = 2t||\bar{b}||_{\infty}$. By Chebyshev's inequality,
  \begin{equation*}
    \begin{split}
      & \Pr\left(\left|\int_0^t \bar{\sigma}(r,\bar{X}_r)\dd \bar{W}_r\right| \leq t||\bar{b}||_{\infty}-x \quad\Big|\quad \bar{X}_0 \leq 2x - C_2(t)\right)\\
      & \qquad\geq 1-\frac{\Exp\left(\left|\displaystyle\int_0^t \bar{\sigma}(r,\bar{X}_r)\dd \bar{W}_r\right|^2\quad\Big|\quad \bar{X}_0 \leq 2x - C_2(t)\right)}{(t||\bar{b}||_{\infty}-x)^2}\\
      & \qquad\geq 1-\frac{t||\bar{\sigma}^2||_{\infty}}{(t||\bar{b}||_{\infty}-x)^2},
    \end{split}
  \end{equation*}
  and the right-hand side is larger than $1/2$ as soon as $x \leq -C_3(t) := -(2t||\bar{\sigma}^2||_{\infty})^{1/2}$.
  
  As far as the upper bound is concerned, for $x \leq -C_1(t)$,
  \begin{equation*}
    \begin{split}
      F_t(x) & = \Pr\left(\bar{X}_0 + \int_0^t \bar{b}(r,\bar{X}_r)\dd r + \int_0^t \bar{\sigma}(r,\bar{X}_r)\dd \bar{W}_r \leq x\right) \\
      & \leq \Pr\left(\bar{X}_0 + \int_0^t \bar{\sigma}(r,\bar{X}_r)\dd \bar{W}_r \leq x + t||\bar{b}||_{\infty}\right)\\
      & = \int_{\R} \Pr\left(\int_0^t \bar{\sigma}(r,\bar{X}_r)\dd \bar{W}_r \leq x - y + t||\bar{b}||_{\infty} \quad \Big| \quad \bar{X}_0 = y\right)m(\dd y).
    \end{split}
  \end{equation*}
  Let us fix $x \leq -C_1(t)$. For $x_0 \in \R$, let us split the integral in the right-hand side above in two parts, integrating respectively on $(-\infty, x_0]$ and $(x_0, +\infty)$. Then the first part can be bounded by $F_0(x_0)$, whereas for the second part the exponential Markov inequality yields, for all $\lambda > 0$,
  \begin{equation*}
    \Pr\left(\int_0^t \bar{\sigma}(r,\bar{X}_r)\dd \bar{W}_r \leq x - y + t||\bar{b}||_{\infty} \quad \Big| \quad \bar{X}_0 = y\right) \leq \exp\left(\lambda(x-y+t||\bar{b}||_{\infty}) + t\lambda^2\frac{||\bar{\sigma}^2||_{\infty}}{2}\right);
  \end{equation*}
  and finally, $F_t(x) \leq F_0(x_0) + \exp\left(\lambda(x-x_0+t||\bar{b}||_{\infty}) + t\lambda^2||\bar{\sigma}^2||_{\infty}/2\right)$. As soon as $x_0 > x + t||\bar{b}||_{\infty}$, optimizing this expression in $\lambda>0$ yields
  \begin{equation*}
    F_t(x) \leq F_0(x_0) + \exp\left(-\frac{(x-x_0+t||\bar{b}||_{\infty})^2}{2t||\bar{\sigma}^2||_{\infty}}\right).
  \end{equation*}
  We now choose $x_0 = (x + t||\bar{b}||_{\infty})/2$, then $x_0 < 0$ and $F_0(x_0) = F_0((x + t||\bar{b}||_{\infty})/2) = F_0(x/2 + C_4(t))$ with $C_4(t) := t||\bar{b}||_{\infty}/2$. Moreover, for $x \leq -C_2(t) = -2t||\bar{b}||_{\infty}$,
  \begin{equation*}
    \exp\left(-\frac{(x-x_0+t||\bar{b}||_{\infty})^2}{2t||\bar{\sigma}^2||_{\infty}}\right) = \exp\left(-x^2\frac{(1+t||\bar{b}||_{\infty}/x)^2}{8t||\bar{\sigma}^2||_{\infty}}\right) \leq \exp\left(-\frac{x^2}{C_5(t)}\right),
  \end{equation*}
  where $C_5(t) := 32 t ||\bar{\sigma}^2||_{\infty}$. We get the first part of the lemma by taking $C(t)$ as the maximum of $C_1(t), \ldots, C_5(t)$, and the second part follows similarly.
\end{proof}

Assuming the classical regularity of $F$, we now derive some estimates on the probability density $\partial_x F_t$ from the celebrated Aronson inequalities on the fundamental solution of a parabolic equation with divergence form operator~\cite{aronson}. Due to the possible dispersion of the initial condition, our upper bound contains an extra tail term in addition to the classical Gaussian term.

\begin{lem}\label{lem:aronson}
  Under the assumptions of Lemma~\ref{lem:EDPF-1}, for all $0 < t_1 < t_2$, there exists a positive constant $K>0$, depending on $t_1$ and $t_2$, such that for all $(t,x) \in (t_1,t_2]\times\R$,
  \begin{equation}\label{eq:aronson}
    \frac{1}{K(t-t_1)^{1/2}}\exp\left(-\frac{Kx^2}{t-t_1}\right) \leq \partial_x F_t (x) \leq \frac{K}{(t-t_1)^{1/2}}\left(\exp\left(-\frac{x^2}{K(t-t_1)}\right) + \ta\left(F_{t_1},\frac{x}{2}\right)\right).
  \end{equation}
\end{lem}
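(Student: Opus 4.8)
The plan is to turn~\eqref{eq:aronson} into a standard two–sided heat–kernel estimate. Since $F$ has the classical regularity it solves the divergence–form Cauchy problem~\eqref{eq:divCauchy}, $\partial_t F_t = \partial_x\bigl(\tfrac{1}{2}a(F_t)\partial_x F_t - B(F_t)\bigr)$; differentiating once in $x$ and writing $p:=\partial_x F$ gives
\begin{equation*}
  \partial_t p_t = \partial_x\bigl(\mathcal{A}_t\,\partial_x p_t + \mathcal{B}_t\,p_t\bigr), \qquad \mathcal{A}_t(x) := \tfrac{1}{2}a(F_t(x)), \quad \mathcal{B}_t(x) := \tfrac{1}{2}a'(F_t(x))\,\partial_x F_t(x) - b(F_t(x)).
\end{equation*}
Fix an auxiliary time $0 < t_0 < t_1$. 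On $[t_0,t_2]\times\R$ one has $\ua/2 \le \mathcal{A} \le ||a||_{\infty}/2$ by (D3), so $\mathcal{A}$ is bounded and uniformly elliptic; and $\mathcal{B}$ is bounded because $a'$ is continuous on $[0,1]$ by (R1), $\partial_x F$ is bounded on $[t_0,t_2]\times\R$ by hypothesis, and $b$ is bounded. Hence $\partial_t - \partial_x(\mathcal{A}\partial_x\cdot + \mathcal{B}\cdot)$ is a uniformly parabolic divergence–form operator with bounded measurable coefficients on $[t_0,t_2]\times\R$.

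Aronson's estimates~\cite{aronson} then furnish a constant $\kappa = \kappa(t_0,t_2) \ge 1$ such that its fundamental solution $\Gamma(t,x;s,y)$ satisfies, for $t_0 \le s < t \le t_2$,
\begin{equation*}
  \frac{1}{\kappa(t-s)^{1/2}}\exp\Bigl(-\frac{\kappa(x-y)^2}{t-s}\Bigr) \le \Gamma(t,x;s,y) \le \frac{\kappa}{(t-s)^{1/2}}\exp\Bigl(-\frac{(x-y)^2}{\kappa(t-s)}\Bigr).
\end{equation*}
Since $p$ is a bounded classical solution on $[t_0,t_2]\times\R$ and $p_s$ is, for each $s$, a continuous bounded probability density, uniqueness of the bounded solution of the linear Cauchy problem gives the representation $p_t(x) = \int_{\R}\Gamma(t,x;s,y)p_s(y)\,\dd y$ for $t_0 \le s < t \le t_2$; in particular $p_t(x)>0$ on $(t_0,t_2]\times\R$, which is the property actually used in Lemma~\ref{lem:EDPF-1}.

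With $s=t_1$, the upper bound follows by splitting, for $x\ge 0$, the integral over $(-\infty,x/2]$ and $(x/2,+\infty)$: on the first set $x-y\ge x/2$, so the Gaussian factor is at most $\exp(-x^2/(4\kappa(t-t_1)))$ and the $p_{t_1}$–mass is at most $1$; on the second set the Gaussian factor is at most $1$ and the $p_{t_1}$–mass equals $1-F_{t_1}(x/2)=\ta(F_{t_1},x/2)$, using that $F_{t_1}$ is continuous. The case $x<0$ is symmetric, and $K:=4\kappa$ gives the right–hand inequality of~\eqref{eq:aronson}. For the lower bound, the lower Aronson estimate yields $p_t(x)\ge\frac{1}{\kappa(t-t_1)^{1/2}}\int_{\R}\exp(-\kappa(x-y)^2/(t-t_1))p_{t_1}(y)\,\dd y$; bounding $(x-y)^2\le 2x^2+2y^2$ and restricting the integral to $\{|y|\le(t-t_1)^{1/2}\}$, on which $\exp(-2\kappa y^2/(t-t_1))\ge\exp(-2\kappa)$ and $p_{t_1}$ is bounded below by a positive constant — because $p_{t_1}$ is continuous and, by the representation above with reference time $t_0$, strictly positive — one obtains a bound of the form $p_t(x)\ge c\exp(-Cx^2/(t-t_1))$, whence the left–hand inequality of~\eqref{eq:aronson}.

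The delicate point, and the main obstacle, is this lower bound, which must be uniform in $t$ as $t\downarrow t_1$: integrating $p_{t_1}$ over a window of fixed size loses control through a factor $\exp(-\mathrm{const}/(t-t_1))$, and letting the window shrink at the scale $(t-t_1)^{1/2}$ is what repairs this, at which stage the strict positivity and continuity of $\partial_x F_{t_1}$ — rather than merely its integrability — are essential. Consistently with the assumed boundedness of $\partial_x F$ up to $t_1$, the prefactor $(t-t_1)^{-1/2}$ in the lower bound of~\eqref{eq:aronson} should be read as absorbed into $K$; the content of that inequality is the strictly positive Gaussian lower decay, which is all that the later arguments require.
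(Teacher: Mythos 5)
Your argument follows the same overall route as the paper: rewrite the equation for $p := \partial_x F$ in divergence form with bounded, uniformly elliptic coefficients, invoke Aronson's two-sided Gaussian bounds for the fundamental solution, represent $p_t$ as a convolution against $p_{t_1}$, and split the upper bound at $y = x/2$ exactly as the paper does. The upper-bound half is essentially identical to the paper's.

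Where you genuinely diverge, and to your credit, is the lower bound. The paper integrates the lower Aronson kernel over a \emph{fixed} window $[x_-,x_+]$ carrying $P_{t_1}$-mass $\ge 1/2$, obtaining $p_t(x)\ge \frac{\kappa_1}{2(t-t_1)^{1/2}}\exp\bigl(-((x_+\vee -x_-)+|x|)^2/(\gamma_1(t-t_1))\bigr)$, and then claims this is $\ge \frac{1}{K}\exp(-Kx^2/(t-t_1))$ for a single $K$ depending only on $t_1,t_2$. That last step is in fact false: at $x=0$ the left side is $\frac{\kappa_1}{2}\exp\bigl(-c^2/(\gamma_1(t-t_1))\bigr)$ with $c := x_+\vee -x_->0$, which tends to $0$ as $t\downarrow t_1$, while the right side is the fixed constant $1/K$. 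More fundamentally, the lower bound of~\eqref{eq:aronson} as printed cannot hold: taking $x=0$ it forces $\partial_x F_t(0)\ge \frac{1}{K}(t-t_1)^{-1/2}\to +\infty$ as $t\downarrow t_1$, contradicting the standing hypothesis that $\partial_x F$ is bounded on $[t_1,t_2]\times\R$. You identify precisely this, and your remedy — integrating over the shrinking window $\{|y|\le (t-t_1)^{1/2}\}$, using that $p_{t_1}$ is continuous and strictly positive (secured by running the argument once more from an auxiliary $t_0<t_1$) — correctly produces $p_t(x)\ge c\exp\bigl(-Cx^2/(t-t_1)\bigr)$ with $c,C$ depending only on $t_1,t_2$. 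As you observe, this is the bound that should replace the left-hand inequality of~\eqref{eq:aronson}; it is strictly positive and Gaussian-decaying, which is all that the proof of Lemma~\ref{lem:EDPF-1} (the implicit-function-theorem step) requires. So the difference between your proof and the paper's is not cosmetic: your lower bound is the one that actually holds, and you have flagged and repaired a genuine flaw in both the statement and the proof of the lemma.

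Two small presentational points. First, you write at the end of the lower-bound computation "whence the left-hand inequality of~\eqref{eq:aronson}," which is not literally what you obtain, since the $(t-t_1)^{-1/2}$ prefactor is absent; your subsequent remark explains this, but it would be cleaner to state the corrected inequality directly rather than derive the uncorrected one. Second, the paper avoids your auxiliary $t_0$ and the strict positivity of $p_{t_1}$ altogether by working with the measure $P_{t_1}$ rather than its density; you need $t_0$ only because you want a pointwise lower bound on $p_{t_1}$ near the origin, and that is a fair price for the correct uniform-in-$t$ conclusion.
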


\begin{proof}
  The assumptions of Lemma~\ref{lem:EDPF-1} together with Corollary~\ref{cor:nlmp} ensure that the nonlinear martingale problem of Subsection~\ref{ss:nlmp} has a unique weak solution $P$. We denote by $X$ the associated nonlinear diffusion process. Let $\Gamma(s,y;t,\dd x) = \Pr(X_t \in \dd x | X_s = y)$ be the transition probability of $X$. The generator 
  \begin{equation*}
    L f := \frac{1}{2}a(F_t(x)) \partial^2_x f + b(F_t(x)) \partial_x f
  \end{equation*}
  is uniformly elliptic, and by the regularity assumptions on $A$, $B$ and $F$, it rewrites in the divergence form 
  \begin{equation*}
    L f = \frac{1}{2}\partial_x\big(a(F_t(x))\partial_x f\big) - \left(\frac{1}{2}a'(F_t(x))\partial_x F_t(x) - b(F_t(x))\right)\partial_x f. 
  \end{equation*}
  Let $0 < t_1 < t_2$. The assumption of boundedness of $\partial_x F_t(x)$ on $[t_1,t_2]\times\R$ ensures that the coefficients of the latter form are bounded. Thus, owing to~\cite{aronson}, $\Gamma(s,y;t,\dd x)$ admits a density $g(s,y;t,x)$ and there exist some positive constants $\gamma_i, \kappa_i$, $i \in \{1,2\}$, depending on $t_1$ and $t_2$, such that for all $t \in (t_1,t_2]$, 
  \begin{equation*}
    \frac{\kappa_1}{(t-t_1)^{1/2}} \exp\left(-\frac{(x-y)^2}{\gamma_1(t-t_1)}\right) \leq g(t_1,y;t,x) \leq \frac{\kappa_2}{(t-t_1)^{1/2}} \exp\left(-\frac{(x-y)^2}{\gamma_2(t-t_1)}\right).
  \end{equation*}
  Hence, $p_t^1(x) \leq \partial_x F_t(x) \leq p_t^2(x)$, where
  \begin{equation*}
    p_t^i(x) := \frac{\kappa_i}{(t-t_1)^{1/2}} \int_{\R} \exp\left(-\frac{(x-y)^2}{\gamma_i(t-t_1)}\right)P_{t_1}(\dd y).
  \end{equation*}
    
  For all $x \geq 0$,
  \begin{equation*}
    \int_{-\infty}^{x/2} \exp\left(-\frac{(x-y)^2}{\gamma_2 (t-t_1)}\right)P_{t_1}(\dd y) \leq \exp\left(-\frac{x^2}{4\gamma_2 (t-t_1)}\right)
  \end{equation*}
  while
  \begin{equation*}
    \int_{x/2}^{+\infty} \exp\left(-\frac{(x-y)^2}{\gamma_2 (t-t_1)}\right)P_{t_1}(\dd y) \leq 1-F_{t_1}(x/2).
  \end{equation*}
  Likewise, for $x \leq 0$,
  \begin{equation*}
    \int_{x/2}^{+\infty} \exp\left(-\frac{(x-y)^2}{\gamma_2 (t-t_1)}\right)P_{t_1}(\dd y) \leq \exp\left(-\frac{x^2}{4\gamma_2 (t-t_1)}\right)
  \end{equation*}
  while
  \begin{equation*}
    \int_{-\infty}^{x/2} \exp\left(-\frac{(x-y)^2}{\gamma_2 (t-t_1)}\right)P_{t_1}(\dd y) \leq F_{t_1}(x/2),
  \end{equation*}
  so that the upper bound of~\eqref{eq:aronson} holds for any $K \geq \kappa_2 \vee (4\gamma_2)$.
  
  As far as the lower bound is concerned, there exist $x_- < 0 < x_+$ such that $F_{t_1}(x_+) - F_{t_1}(x_-) \geq 1/2$. Then for all $x \in \R$,
  \begin{equation*}
    \begin{split}
      p^1_t(x) & \geq \frac{\kappa_1}{(t-t_1)^{1/2}}\int_{x_-}^{x_+} \exp\left(-\frac{(x-y)^2}{\gamma_1 (t-t_1)}\right)P_{t_1}(\dd y)\\
      & \geq \frac{\kappa_1}{2(t-t_1)^{1/2}} \exp\left(-\frac{((x_+\vee-x_-)+|x|)^2}{\gamma_1 (t-t_1)}\right).
    \end{split}
  \end{equation*}
  Now there exists $K > 0$ large enough, depending on $\kappa_1$, $\gamma_1$ and $x_+\vee-x_-$, such that for all $x \in \R$,
  \begin{equation*}
    \frac{\kappa_1}{2}\exp\left(-\frac{((x_+\vee-x_-)+|x|)^2}{\gamma_1 (t-t_1)}\right) \geq \frac{1}{K}\exp\left(-\frac{Kx^2}{t-t_1}\right),
  \end{equation*}
  which results in the lower bound of~\eqref{eq:aronson}.
\end{proof}

\subsection{Proof of Proposition~\ref{prop:quant}}\label{app:ss:quant} We first give a general formula for the Wasserstein distance $W_p(F,G)$ between two cumulative distribution functions $F$ and $G$.

\begin{lem}\label{lem:Wpp}
  Let $F$ and $G$ be two cumulative distribution functions on $\R$. Then, for all $p > 1$,
  \begin{equation}\label{eq:WR2}
    W_p^p(F,G) = p(p-1)\int_{\R^2} \ind{x<y}\left([G(x)-F(y)]^+ + [F(x)-G(y)]^+\right)(y-x)^{p-2}\dd x\dd y.
  \end{equation}
\end{lem}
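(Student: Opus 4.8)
The plan is to start from the known one-dimensional identity $W_p^p(F,G) = \int_0^1 |F^{-1}(u) - G^{-1}(u)|^p\,\dd u$ from~\eqref{eq:WinvCDF} and rewrite the integrand using the elementary formula $|a-b|^p = p(p-1)\int_{\R^2} \ind{a<z<w<b \text{ or } b<z<w<a}(w-z)^{p-2}\,\dd z\,\dd w$, which holds for $p>1$ and follows by computing the double integral explicitly (for $a<b$ it equals $p(p-1)\int_a^b\int_z^b (w-z)^{p-2}\,\dd w\,\dd z = p\int_a^b (b-z)^{p-1}\,\dd z = (b-a)^p$). Applying this with $a = F^{-1}(u)$, $b = G^{-1}(u)$ and inserting into~\eqref{eq:WinvCDF} gives
\begin{equation*}
  W_p^p(F,G) = p(p-1)\int_0^1 \int_{\R^2} \ind{z<w}\,\ind{z,w \text{ between } F^{-1}(u) \text{ and } G^{-1}(u)}\,(w-z)^{p-2}\,\dd z\,\dd w\,\dd u.
\end{equation*}

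Next I would apply Tonelli's theorem (everything is nonnegative) to bring the $\dd u$ integral inside, and identify, for fixed $z<w$, the set of $u\in(0,1)$ for which $z$ and $w$ both lie between $F^{-1}(u)$ and $G^{-1}(u)$. The key observation is the standard equivalence $F^{-1}(u) > x \iff u \geq F(x)$ (for right-continuous nondecreasing $F$ with the stated pseudo-inverse convention), so that $z$ lies strictly between $F^{-1}(u)$ and $G^{-1}(u)$ with $F^{-1}(u)$ on the left means $F^{-1}(u)\le z < G^{-1}(u)$, i.e. $u < F(z)$ wait — I must be careful with the exact inequalities and the two cases $F^{-1}(u) < G^{-1}(u)$ versus the reverse. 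Writing it out: for $z<w$, the condition ``both $z$ and $w$ between the two inverses'' splits into the case $F^{-1}(u)\le z<w\le G^{-1}(u)$, which by the equivalence translates to $u<G(z)$ and $u\ge F(w)$, contributing Lebesgue measure $[G(z)-F(w)]^+$; and the symmetric case $G^{-1}(u)\le z<w\le F^{-1}(u)$, contributing $[F(z)-G(w)]^+$. Summing the two cases and relabelling $(z,w)\to(x,y)$ yields exactly~\eqref{eq:WR2}.

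The main obstacle, and the only place real care is needed, is the precise handling of the endpoint inequalities in the equivalence $F^{-1}(u) \lessgtr x$, since $F$ need not be continuous or strictly increasing and $F^{-1}$ is only a pseudo-inverse; one must check that the boundary values of $u$ (where $F^{-1}(u)=z$ exactly, or where $F$ has jumps) form a null set and do not affect the Lebesgue measure of the relevant $u$-intervals, so that the contribution is genuinely $[G(z)-F(w)]^+$ and not off by the measure of some exceptional set. This is routine but must be done honestly; once the measure-of-$u$ computation is pinned down, the rest is just Tonelli and relabelling. I expect no difficulty with convergence or integrability: all integrands are nonnegative, so Tonelli applies unconditionally, and finiteness of the right-hand side is automatic from finiteness of $W_p^p$ (or both sides are simultaneously $+\infty$).
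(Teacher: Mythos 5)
Your plan is correct in outline and is essentially the paper's own argument run in reverse. The paper starts from the right-hand side of~\eqref{eq:WR2}, writes $[G(x)-F(y)]^+ = \int_0^1 \ind{F(y)<u\le G(x)}\,\dd u$, applies Fubini--Tonelli, translates the condition $F(y)<u\le G(x)$ via the pseudo-inverse equivalences into $G^{-1}(u)\le x<y<F^{-1}(u)$, and then evaluates the inner $(x,y)$-integral for fixed $u$ to recover $(F^{-1}(u)-G^{-1}(u))_+^p$; you instead start from~\eqref{eq:WinvCDF}, expand $|a-b|^p$ by the elementary double-integral identity, and integrate out $u$ first. Both directions rest on the same two ingredients (Tonelli and the pseudo-inverse/CDF equivalences) and the same elementary computation of $\iint_{a<z<w<b}(w-z)^{p-2}\,\dd z\,\dd w$, so the approach is the same.

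There is, however, a slip in your case analysis that should be corrected even though it does not change the final answer. With $F^{-1}(u)=\inf\{x:F(x)>u\}$, the exact equivalences are $F^{-1}(u)\le z \iff F(z)>u$ and $w\le G^{-1}(u)\iff G(w^-)\le u$. Hence the case $F^{-1}(u)\le z<w\le G^{-1}(u)$ corresponds to the $u$-set $\{G(w^-)\le u<F(z)\}$ and contributes Lebesgue measure $[F(z)-G(w^-)]^+$, which equals $[F(z)-G(w)]^+$ after integrating in $(z,w)$ (the set of $w$ at which $G(w^-)\neq G(w)$ is countable). This is \emph{not} $[G(z)-F(w)]^+$ as you wrote; the symmetric case $G^{-1}(u)\le z<w\le F^{-1}(u)$ is the one contributing $[G(z)-F(w)]^+$. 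You have simply interchanged the labels of the two cases, and since the sum is symmetric in $F\leftrightarrow G$ the conclusion is unaffected, but the individual identifications as written are incorrect and the step as stated would not survive scrutiny.
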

\begin{proof} 
  Let us split the right-hand side of~\eqref{eq:WR2} into two symmetric integrals in $F$ and $G$. Thanks to the Fubini-Tonelli theorem, the first integral writes:
  \begin{equation*}
    \begin{aligned}
      & \int_{\R^2} \ind{x<y}[G(x)-F(y)]^+p(p-1)(y-x)^{p-2}\dd x\dd y\\
      & \qquad = \int_{\R^2} \ind{x<y; G(x) \geq F(y)} \left(\int_0^1 \ind{F(y) < u \leq G(x)}\dd u\right) p(p-1)(y-x)^{p-2}\dd x\dd y\\
      & \qquad = \int_0^1 \int_{\R^2} \ind{x<y; F(y) < u \leq G(x)} p(p-1)(y-x)^{p-2}\dd x\dd y \dd u.
    \end{aligned}
  \end{equation*}
  
  By the definition of the pseudo-inverse functions $F^{-1}$ and $G^{-1}$, note that for all $x,y \in \R$ and $u \in (0,1)$, $F(y) < u$ if and only if $y < F^{-1}(u)$ and $G(x) \geq u$ if and only if $x \geq G^{-1}(u)$. Thus, the right-hand side above rewrites
  \begin{equation*}
    \begin{aligned}  
      & \int_0^1 \int_{\R^2} \ind{G^{-1}(u) \leq x < y < F^{-1}(u)} p(p-1)(y-x)^{p-2}\dd x\dd y \dd u\\
      & \qquad = \int_0^1 \ind{G^{-1}(u) < F^{-1}(u)}\int_{x=G^{-1}(u)}^{F^{-1}(u)} \int_{y=x}^{F^{-1}(u)} p(p-1)(y-x)^{p-2}\dd y \dd x \dd u\\
      & \qquad = \int_0^1 \ind{G^{-1}(u) < F^{-1}(u)}(F^{-1}(u)-G^{-1}(u))^p \dd u,
    \end{aligned}
  \end{equation*}
  and we conclude using the symmetry in $F$ and $G$ of the two integrals in the right-hand side of~\eqref{eq:WR2}.
\end{proof}

We are now ready to complete the proof of Proposition~\ref{prop:quant}.

\begin{proof}[Proof of Proposition~\ref{prop:quant}]
  For all $t>0$,~\eqref{eq:WR2} yields
  \begin{equation}\label{eq:WppI}
    W_p^p(F_t,G_t) = p(p-1)(I(F_t,G_t) + I(G_t,F_t)),
  \end{equation}
  where we define $I(F_t,G_t) := \int_{\R^2} \ind{x<y}[G_t(x)-F_t(y)]^+(y-x)^{p-2}\dd x\dd y$. The assumption that $W_p(F_0,G_0) < +\infty$ combined with Proposition~\ref{prop:contr} ensures that both $I(F_t,G_t)$ and $I(G_t,F_t)$ are finite.
  
  For all $M \geq 0$, let us denote
  \begin{equation*}
      I_M(F_t,G_t) := \int_{\R^2} \ind{-M \leq x<y \leq M}[G_t(x)-F_t(y)]^+(y-x)^{p-2}\dd x\dd y,
  \end{equation*}
  then by the monotone convergence theorem, $\lim_{M \to +\infty} I_M(F_t,G_t) = I(F_t,G_t) < +\infty$. Owing to the assumption of classical regularity on $F$ and $G$, the function $t \mapsto I_M(F_t,G_t)$ is $C^1$ on $(0,+\infty)$ and for all $t > 0$,
  \begin{equation}\label{eq:dIMdt}
    \begin{aligned}
      & \frac{\dd}{\dd t} I_M(F_t,G_t) =\int_{\R^2} \ind{-M \leq x < y \leq M}\ind{G_t(x) \geq F_t(y)}(\partial_t G_t(x) - \partial_t F_t(y))(y-x)^{p-2}\dd x\dd y\\
      & \quad = \int_{\R^2} \ind{-M \leq x < y \leq M; G_t(x) \geq F_t(y)}\partial_x\left(\frac{1}{2}a(G_t(x))\partial_xG_t(x) - B(G_t(x))\right)(y-x)^{p-2}\dd x\dd y\\
      & \quad \quad - \int_{\R^2} \ind{-M \leq x < y \leq M; G_t(x) \geq F_t(y)}\partial_x\left(\frac{1}{2}a(F_t(y))\partial_xF_t(y) - B(F_t(y))\right)(y-x)^{p-2}\dd x\dd y.\\
    \end{aligned}
  \end{equation}
  
  Let us define $\varphi_M^+(x) := M \wedge F_t^{-1}(G_t(x))$ and $\varphi_M^-(x) := (-M) \vee G_t^{-1}(F_t(x))$. Then the first integral in the right-hand side of~\eqref{eq:dIMdt} rewrites
  \begin{equation*}
    \int_{y=-M}^M \ind{F_t(y) \leq G_t(y)} \int_{x=\varphi_M^-(y)}^y (y-x)^{p-2}\partial_x\left(\frac{1}{2}a(G_t(x))\partial_xG_t(x) - B(G_t(x))\right)\dd x \dd y
  \end{equation*}
  and integrating by parts, we get
  \begin{equation*}
    \begin{aligned}
      & \int_{x=\varphi_M^-(y)}^y (y-x)^{p-2}\partial_x\left(\frac{1}{2}a(G_t(x))\partial_xG_t(x) - B(G_t(x))\right)\dd x\\
      & \qquad = -(y-\varphi_M^-(y))^{p-2} \left(\frac{1}{2}a(G_t(-M)\vee F_t(y))\partial_xG_t(\varphi_M^-(y)) - B(G_t(-M)\vee F_t(y))\right)\\
      & \qquad \quad + \int_{x=\varphi_M^-(y)}^y (p-2)(y-x)^{p-3}\left(\frac{1}{2}a(G_t(x))\partial_xG_t(x) - B(G_t(x))\right)\dd x.
    \end{aligned}  
  \end{equation*}
  Now
  \begin{equation*}
    \begin{aligned}
      & \int_{y=-M}^M \ind{F_t(y) \leq G_t(y)} \int_{x=\varphi_M^-(y)}^y (p-2)(y-x)^{p-3}\left(\frac{1}{2}a(G_t(x))\partial_xG_t(x) - B(G_t(x))\right)\dd x\dd y\\
      & \qquad = \int_{x=-M}^M \ind{F_t(x) \leq G_t(x)}\left(\frac{1}{2}a(G_t(x))\partial_xG_t(x) - B(G_t(x))\right) \int_{y=x}^{\varphi_M^+(x)} (p-2)(y-x)^{p-3} \dd y\dd x\\
      & \qquad = \int_{x=-M}^M \ind{F_t(x) \leq G_t(x)}\left(\frac{1}{2}a(G_t(x))\partial_xG_t(x) - B(G_t(x))\right) (\varphi_M^+(x)-x)^{p-2} \dd y\dd x,
    \end{aligned}  
  \end{equation*}
  so that the first integral in the right-hand side of~\eqref{eq:dIMdt} finally writes
  \begin{equation*}
    \begin{aligned}
      & \int_{-M}^M \ind{F_t(x) \leq G_t(x)} \Big\{ (\varphi_M^+(x)-x)^{p-2}\Big(\frac{1}{2}a(G_t(x))\partial_xG_t(x) - B(G_t(x))\Big)\\
      & - (x-\varphi_M^-(x))^{p-2}\Big(\frac{1}{2}a(G_t(-M)\vee F_t(x))\partial_xG_t(\varphi_M^-(x)) - B(G_t(-M)\vee F_t(x))\Big) \Big\} \dd x
    \end{aligned}
  \end{equation*}
  whereas the second integral similarly writes
  \begin{equation*}
    \begin{aligned}
      & \int_{-M}^M \ind{F_t(x) \leq G_t(x)} \Big\{ (\varphi_M^+(x)-x)^{p-2} \Big(\frac{1}{2}a(F_t(M)\wedge G_t(x))\partial_xF_t(\varphi_M^+(x)) - B(F_t(M)\wedge G_t(x))\Big)\\
      & - (x-\varphi_M^-(x))^{p-2}\Big(\frac{1}{2}a(F_t(x))\partial_xF_t(x) - B(F_t(x))\Big) \Big\} \dd x.
    \end{aligned}
  \end{equation*}
  
  Hence, we deduce that for all $0 < t_1 \leq t_2$,
  \begin{equation*}
    I_M(F_{t_2},G_{t_2}) - I_M(F_{t_1},G_{t_1}) = \int_{t_1}^{t_2} \frac{\dd}{\dd t} I_M(F_t,G_t)\dd t = J^1_M + J^2_M + J^3_M,
  \end{equation*}
  where
  \begin{equation*}
    \begin{split}
      J^1_M & := \int_{t_1}^{t_2}\int_{-M}^M \ind{F_t(x) \leq G_t(x)}\{ (\varphi_M^+(x) - x)^{p-2}[B(F_t(M)\wedge G_t(x)) - B(G_t(x))]\\
      & \quad + (x - \varphi_M^-(x))^{p-2}[B(G_t(-M)\vee F_t(x)) - B(F_t(x))] \} \dd x \dd t;\\
      J^2_M & := \frac{1}{2}\int_{t_1}^{t_2}\int_{-M}^M \ind{F_t(x) \leq G_t(x)}\{ (\varphi_M^+(x) - x)^{p-2}a(G_t(x))\partial_x G_t(x)\\
      & \quad + (x - \varphi_M^-(x))^{p-2}a(F_t(x))\partial_x F_t(x) \} \dd x \dd t;\\
      J^3_M & := -\frac{1}{2}\int_{t_1}^{t_2}\int_{-M}^M \ind{F_t(x) \leq G_t(x)}\{ (\varphi_M^+(x) - x)^{p-2}a(F_t(M) \wedge G_t(x))\partial_x F_t(\varphi_M^+(x))\\
      & \quad + (x - \varphi_M^-(x))^{p-2}a(G_t(-M) \vee F_t(x))\partial_x G_t(\varphi_M^-(x)) \} \dd x \dd t.
    \end{split}
  \end{equation*}
  
  \sk
  {\bf Integral term $J_M^1$.} Since $B$ is $C^1$ on $[0,1]$,
  \begin{equation*}
    \begin{aligned}
      |J^1_M| & \leq \int_{t_1}^{t_2}\int_{-M}^M \ind{F_t(x) \leq G_t(x)}||b||_{\infty}\{ (\varphi_M^+(x) - x)^{p-2}|F_t(M)\wedge G_t(x) - G_t(x)|\\
      & \quad + (x - \varphi_M^-(x))^{p-2}|G_t(-M)\vee F_t(x) - F_t(x)| \} \dd x \dd t\\
      & \leq \int_{t_1}^{t_2}\int_{-M}^M ||b||_{\infty} (2M)^{p-2} \{[G_t(x) - F_t(M)]^+ + [G_t(-M)-F_t(x)]^+\}\dd x\dd t\\
      & \leq \int_{t_1}^{t_2} ||b||_{\infty} (2M)^{p-1} \{\ta(F_t,M) + \ta(G_t,-M)\}\dd t.
    \end{aligned}
  \end{equation*}
  By Lemma~\ref{lem:FtF0}, for all $M \geq 2 C(t_2)$, for all $t \in [t_1,t_2]$,
  \begin{equation*}
    \begin{aligned}
      & \ta(F_t,M) \leq \ta(F_0, M/2-C(t_2)) + \exp(-M^2/C(t_2)),\\
      & \ta(G_t,-M) \leq \ta(G_0, -M/2+C(t_2)) + \exp(-M^2/C(t_2)),
    \end{aligned}
  \end{equation*}
  so that $|J^1_M| \to 0$ when $M \to +\infty$ due to the tail assumption on $F_0$ and $G_0$.
  
  \sk
  {\bf Integral term $J_M^2$.} By the monotone convergence theorem,
  \begin{equation*}
    \begin{split}
      \lim_{M \to +\infty} J^2_M & = \frac{1}{2}\int_{t_1}^{t_2}\int_{\R} \ind{F_t(x) \leq G_t(x)}\{ (F_t^{-1}(G_t(x)) - x)^{p-2}a(G_t(x))\partial_x G_t(x)\\
      & \quad + (x - G_t^{-1}(F_t(x)))^{p-2}a(F_t(x))\partial_x F_t(x) \} \dd x \dd t,
    \end{split}  
  \end{equation*}
  and the limit is finite as
  \begin{equation*}
    \begin{aligned}
      & \int_{t_1}^{t_2}\int_{\R} \ind{F_t(x) \leq G_t(x)} (F_t^{-1}(G_t(x)) - x)^{p-2}a(G_t(x))\partial_x G_t(x) \dd x\dd t\\
      & \qquad \leq ||a||_{\infty} \int_{t_1}^{t_2}\int_{\R}|F_t^{-1}(G_t(x)) - x|^{p-2}\partial_x G_t(x) \dd x\dd t\\
      & \qquad = ||a||_{\infty} \int_{t_1}^{t_2}W_{p-2}^{p-2}(F_t,G_t)\dd t\\
      & \qquad \leq ||a||_{\infty}(t_2-t_1)W_{p-2}^{p-2}(F_0,G_0) < +\infty,
    \end{aligned}  
  \end{equation*}
  due to Proposition~\ref{prop:contr} (we take the convention that $W_0^0(F_t,G_t) = 1$).
  
  \sk
  {\bf Integral term $J_M^3$.} Note that
  \begin{equation}\label{eq:J3}
    \begin{aligned}
      & \int_{t_1}^{t_2}\int_{-M}^M \ind{F_t(x) \leq G_t(x)}(\varphi_M^+(x) - x)^{p-2}a(F_t(M) \wedge G_t(x))\partial_x F_t(\varphi_M^+(x)) \dd x\dd t\\
      & = \int_{t_1}^{t_2}\int_{-M}^M \ind{F_t(x) \leq G_t(x) ; F_t(M) \leq G_t(x)}(M - x)^{p-2}a(F_t(M))\partial_x F_t(M) \dd x\dd t +\\
      & \int_{t_1}^{t_2}\int_{-M}^M \ind{F_t(x) \leq G_t(x) ; F_t(M) > G_t(x)}(F_t^{-1}(G_t(x)) - x)^{p-2}a(G_t(x))\partial_x F_t(F_t^{-1}(G_t(x))) \dd x\dd t.
    \end{aligned}
  \end{equation}
  According to Lemmas~\ref{lem:FtF0} and~\ref{lem:aronson}, letting $C := C(t_1/2)$, for $M \geq 4C$, the first integral in the right-hand side of~\eqref{eq:J3} is bounded by
  \begin{equation*}
    \begin{aligned}
      & ||a||_{\infty}\int_{t_1}^{t_2} (2M)^{p-1}\partial_x F_t(M) \dd t\\
      & \qquad \leq ||a||_{\infty}(2M)^{p-1}\int_{t_1}^{t_2} \frac{K}{(t-t_1/2)^{1/2}} \left(\exp\left(-\frac{M^2}{K(t-t_1/2)}\right) + \ta\left(F_{t_1/2}, \frac{M}{2}\right)\right) \dd t\\
      & \qquad \leq K||a||_{\infty}(2M)^{p-1}\frac{(t_2-t_1)}{(t_1/2)^{1/2}} \left(\exp\left(-\frac{M^2}{K(t_2-t_1/2)}\right) + \ta\left(F_0, \frac{M}{4} - C\right) + \exp\left(-\frac{M^2}{C}\right)\right),
    \end{aligned}
  \end{equation*}
  and the right-hand side of the last inequality vanishes when $M \to +\infty$, whereas the second integral in the right-hand side of~\eqref{eq:J3} converges monotonically to
  \begin{equation*}
    \int_{t_1}^{t_2}\int_{\R} \ind{F_t(x) \leq G_t(x)}(F_t^{-1}(G_t(x)) - x)^{p-2}a(G_t(x))\partial_x F_t(F_t^{-1}(G_t(x))) \dd x\dd t.
  \end{equation*}
  The second term in $J^3_M$ is similar.
  
  \sk
  {\bf Conclusion.} Taking the limit $M \to +\infty$ in the equality $I_M(F_{t_2},G_{t_2}) - I_M(F_{t_1},G_{t_1}) - (J^1_M + J^2_M) = J^3_M$ now yields
  \begin{equation*}
    \begin{aligned}
      & I(F_{t_2},G_{t_2}) - I(F_{t_1},G_{t_1}) - \frac{1}{2} \int_{t_1}^{t_2} \int_{\R} \ind{F_t(x) \leq G_t(x)}\{(F_t^{-1}(G_t(x)) - x)^{p-2}a(G_t(x))\partial_x G_t(x)\\
      & + (x - G_t^{-1}(F_t(x)))^{p-2}a(F_t(x))\partial_x F_t(x) \} \dd x \dd t\\
      & \qquad = -\frac{1}{2}\left(\int_{t_1}^{t_2}\int_{\R} \ind{F_t(x) \leq G_t(x)}(F_t^{-1}(G_t(x)) - x)^{p-2}a(G_t(x))\partial_x F_t(F_t^{-1}(G_t(x))) \dd x\dd t\right.\\
      & \qquad \quad  \left. + \int_{t_1}^{t_2}\int_{\R} \ind{F_t(x) \leq G_t(x)}(x - G_t^{-1}(F_t(x)))^{p-2}a(F_t(x))\partial_x G_t(G_t^{-1}(F_t(x))) \dd x\dd t\right).
    \end{aligned}
  \end{equation*}
  The left-hand side of the equality above is finite and the integrands in both integrals of the right-hand side are nonnegative. Hence, all the integrals involved are absolutely convergent, and we deduce
  \begin{equation*}
    \begin{aligned}
      & I(F_{t_2},G_{t_2}) - I(F_{t_1},G_{t_1})\\
      & \qquad = \frac{1}{2} \int_{t_1}^{t_2} \int_{\R} \ind{F_t(x) \leq G_t(x)} \{(F_t^{-1}(G_t(x))-x)^{p-2} \left(a(G_t(x))(\partial_x G_t(x) - \partial_x F_t(F_t^{-1}(G_t(x)))\right)\\
      & \qquad \quad + (x-G_t^{-1}(F_t(x)))^{p-2} \left(a(F_t(x))(\partial_x F_t(x) - \partial_x G_t(G_t^{-1}(F_t(x)))\right)\}\dd x\dd t.
    \end{aligned}
  \end{equation*}
  
  By symmetry and using~\eqref{eq:WppI}, we now conclude
  \begin{equation*}
    \begin{aligned}
      & W_p^p(F_{t_2},G_{t_2}) - W_p^p(F_{t_1},G_{t_1}) \\
      & \qquad = \frac{p(p-1)}{2} \int_{t_1}^{t_2}\int_{\R}\{|F_t^{-1}(G_t(x))-x|^{p-2} \left(a(G_t(x))(\partial_x G_t(x) - \partial_x F_t(F_t^{-1}(G_t(x)))\right)\\
      & \qquad \quad + |x-G_t^{-1}(F_t(x))|^{p-2} \left(a(F_t(x))(\partial_x F_t(x) - \partial_x G_t(G_t^{-1}(F_t(x)))\right)\}\dd x\dd t\\
      & \qquad = \frac{p(p-1)}{2}\int_{t_1}^{t_2}\left\{\int_0^1 |F_t^{-1}(u) - G_t^{-1}(u)|^{p-2} a(u) \left(\frac{1}{\partial_u G_t^{-1}(u)} - \frac{1}{\partial_u F_t^{-1}(u)}\right) \partial_u G_t^{-1}(u) \dd u\right.\\
      & \qquad \quad \left. + \int_0^1 |F_t^{-1}(u) - G_t^{-1}(u)|^{p-2} a(u) \left(\frac{1}{\partial_u F_t^{-1}(u)} - \frac{1}{\partial_u G_t^{-1}(u)}\right) \partial_u F_t^{-1}(u) \dd u\right\}\dd t\\
      & \qquad = -\frac{p(p-1)}{2}\int_{t_1}^{t_2}\int_0^1 a(u)|F_t^{-1}(u) - G_t^{-1}(u)|^{p-2} \frac{(\partial_u F_t^{-1}(u)-\partial_u G_t^{-1}(u))^2}{\partial_u F_t^{-1}(u)\partial_u G_t^{-1}(u)}\dd u\dd t;
    \end{aligned}
  \end{equation*}
  which completes the proof.
\end{proof}


\def\cprime{$'$}

\end{document}